\newtheorem{theorem}{Theorem}[section]
\newtheorem{claim}[theorem]{Claim}
\newtheorem{conjecture}[theorem]{Conjecture}
\newtheorem{corollary}[theorem]{Corollary}
\newtheorem{lemma}[theorem]{Lemma}
\newtheorem{proposition}[theorem]{Proposition}
\newtheorem*{problem*}{Problem}
\newtheorem{question}{Question}
\newtheorem*{question*}{Question}
\newtheorem{mainthm}{Theorem}
\newtheorem*{rep@theorem}{\rep@title}
\newcommand{\newreptheorem}[2]{%
\newenvironment{rep#1}[1]{%
 \def\rep@title{#2 \ref{##1}}%
 \begin{rep@theorem}}%
 {\end{rep@theorem}}}
\newtheorem{definition}[theorem]{Definition}
\theoremstyle{remark}
\newtheorem*{remark}{Remark}
\theoremstyle{plain} 
\newcommand{\thistheoremname}{}
  \newtheorem*{genericthm*}{\thistheoremname}
\newenvironment{namedthm*}[1]
  {\renewcommand{\thistheoremname}{#1}%
   \begin{genericthm*}}
  {\end{genericthm*}}
\newcommand\simtimes{\mathbin{%
    \stackrel{\sim}{\smash{\times}\rule{0pt}{0.6ex}}%
    }}
\newcommand{\negativecrossing}
{\raisebox{0.12in}
{\includegraphics[scale=0.04, angle =270]{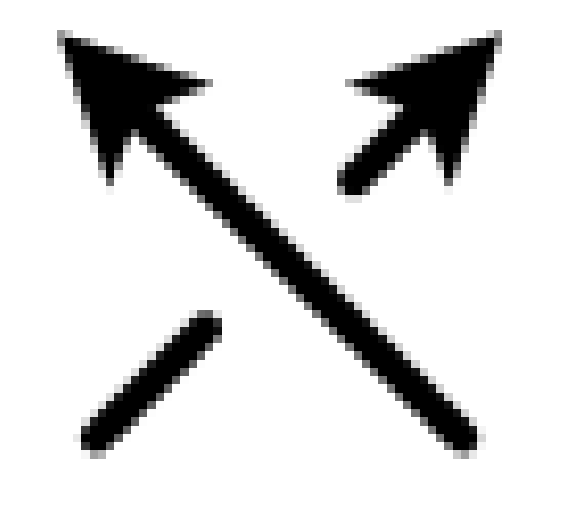}}}
\newcommand{\one}
{\raisebox{-0.04in}
{\includegraphics[scale=0.35]{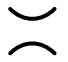}}}
\newcommand{\zero}
{\raisebox{-0.04in}
{\includegraphics[scale=0.35]{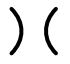}}}
\title{On Amphichirality of Symmetric Unions}
\date{}
\author{Fer\.{I}de Ceren K\"ose}
\address{Department of Mathematics, University of Texas at Austin}
\email{fkose@math.utexas.edu}
\begin{document}





\begin{abstract}
It is still unknown whether there is a nontrivial knot with Jones polynomial equal to that of the unknot. Tanaka shows that if an amphichiral knot is a symmetric union of the unknot with one twist region, then its Jones polynomial is trivial. Hence, he proposes that if any of these knots were nontrivial, a nontrivial knot with trivial Jones polynomial would exist. We first show such a knot is always trivial and hence cannot be used to answer the above question. We then generalize the argument to symmetric unions of any knots and show that if a symmetric union of a knot $J$ with one twist region is amphichiral, then it is the connected sum of $J$ and its mirror image $-J$. 
\end{abstract}

\maketitle
\section{Introduction}
Whenever a knot invariant is defined, a fundamental question arises: Are there distinct knots that share the same invariant? In particular, does it distinguish nontrivial knots from the unknot? For the Alexander polynomial — the first knot polynomial — the answers to these questions are known. The pretzel knots $P(3,2k,-3)$ have the same Alexander polynomial for every integer $k$ and all knots in the Kinoshita-Terasaka family have trivial Alexander polynomial. In fact, among the Kanenobu knots there is an infinite family all of which are fibered and have the same Alexander module \cite{Ka81}. In \cite{Ka86} Kanenobu further extends the study of his knots and provides infinite knot families with the same HOMFLY polynomial and hence the same specializations of HOMFLY polynomial (Alexander polynomial, Jones polynomial, etc). Although, it has been known for many years that the families above, along with a plethora of other families, answer both questions for the Alexander polynomial and the first question for the Jones polynomial, the question of whether a nontrivial knot with trivial Jones polynomial exists remains open.

\begin{problem*}[Kirby Problem 1.88(C)]
Is there a nontrivial knot with the same Jones polynomial as the unknot?
\end{problem*}

Knot Floer homology \cite{OS04} and Khovanov homology \cite{Kh00} categorify the Alexander and Jones polynomials. Recent developments in these theories now verify the second question: Knot Floer homology detects the unknot, and in fact, the knot genus \cite{OS04}. Khovanov homology also detects the unknot \cite{KM11}. Yet still, they fail to distinguish many other knots. The pretzel knots $P(3,2k,-3)$ have identical knot Floer homology \cite{HW18} and some infinite families of Kanenobu knots have identical Khovanov homology \cite{Wa07}. 

It is astonishing that the above problem remains unanswered despite the fact that it was answered for the Alexander polynomial decades ago and has even been answered for more recent knot homologies. A few unsuccesful attempts were made to find such a knot in \cite{APR89} and \cite{JR94} by applying transformations on a complicated diagram of the unknot which do not change the Jones polynomial but possibly change the knot type. Computer searches by \cite{DH97}, \cite{TS18} and \cite{TS21} showed that this knot must have a crossing number of at least 24. 

The problem of finding such a knot is also related to a famous open question in braid groups that asks whether the Burau representation of $B_4$ is faithful. It was a widely believed conjecture that if unfaithful, such a knot exists. The conjecture was recently resolved by Ito who proved that the unfaithfulness of the Burau representation of $B_4$ would indeed imply the existence of such a knot \cite{It15}. On the other hand, such a knot, if it exists, would disprove the volume conjecture \cite{murakamis:volume}.

The same question regarding links with more than one component, however, does have an answer; in \cite{EKT03} Eliahou, Kauffman and Thistlewaite exhibit infinite families of prime $k$-component links with Jones polynomial equal to that of the $k$-component unlink for each $k \geq 2$. 

In the context of studying these questions on knot invariants all of the families mentioned above belong to a class of ribbon knots, called \textit{symmetric unions}. A symmetric union of a knot is a classical construction introduced by Kinoshita and Terasaka \cite{KT57}. A detailed construction and examples are given in Section \ref{S2}. Kinoshita-Terasaka knots are symmetric unions of the unknot, Kanenobu knots are symmetric unions of the Figure-8 knot, and the pretzel knots $P(3,2k,-3)$ are symmetric unions of the trefoil. In fact, one can produce more knots with trivial or the same Alexander polynomial using this construction as the Alexander polynomial of a symmetric union of a knot $J$ depends only on the Alexander polynomial of $J$ and the parity of the number of inserted crossings in each twist region (\cite{KT57}, \cite{La00}). 

Thus, if there really exists a nontrivial knot with trivial Jones polynomial, then it is quite natural to expect it to be a symmetric union. To that end, let us consider the following formula for the Jones Polynomial of a symmetric union with one twist region in terms of the Jones Polynomial of $J$ and the number of inserted crossings given by Tanaka. 

\begin{theorem}[\cite{Ta15}]{\label{ThmTanaka}}
Let $K$ be a knot that admits a symmetric union presentation of the form $(D_J \cup -D_J)(n)$, then
\begin{align*}
    t^n V_K(t)+(-1)^n V_K(t^{-1}) = (t^n + (-1)^n)V_J(t)V_J(t^{-1}). 
\end{align*}
In particular, if $K$ is amphichiral, then $V_K(t) = V_J(t)V_J(t^{-1})$.
\end{theorem}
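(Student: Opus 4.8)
The plan is to push the Kauffman bracket (equivalently the oriented Jones skein relation) through the twist region and solve the resulting recursion in $n$. Write the diagram $(D_J\cup -D_J)(n)$ as a fixed diagram $D = D_J\cup -D_J$ together with a vertical $n$-crossing twist region $\tau_n$. Since $-D_J$ is the mirror image of $D_J$, the writhe contributions of the two halves cancel, so the entire writhe of $(D_J\cup -D_J)(n)$ is carried by $\tau_n$; keeping track of this is where the powers of $t$ in the final formula originate. Resolving a single crossing of $\tau_n$, one smoothing replaces $\tau_n$ by $\tau_{n-1}$, while the other ``turns back'' the twist region, after which the surviving $n-1$ crossings are Reidemeister-I kinks that can be absorbed at a controlled framing cost; the resulting capped diagram $D^{(e)}$ does not depend on $n$. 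This yields a first-order recursion for the bracket, whose solution has the shape
\[
\big\langle (D_J\cup -D_J)(n)\big\rangle \;=\; A^{n}\,\big\langle (D_J\cup -D_J)(0)\big\rangle \;+\;\Big(\text{geometric sum in }A\Big)\,\big\langle D^{(e)}\big\rangle .
\]

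Two identifications anchor the computation. First, $(D_J\cup -D_J)(0)$ is, directly from the construction recalled in Section~\ref{S2}, a diagram of the connected sum $J\#(-J)$, so its Jones polynomial is $V_J(t)V_J(t^{-1})$. Second, the capped diagram $D^{(e)}$ has to be recognized; here one uses the mirror symmetry of $D$ to see that, after normalizing by writhe, the contribution of $D^{(e)}$ is again controlled by $V_J(t)V_J(t^{-1})$. Substituting these, applying the writhe correction $(-A)^{-3w}$ with $w = w(\tau_n)$, and setting $t = A^{-4}$, the geometric sum telescopes: the $A^{n}$-term produces the ``$t^{n}$-part'' and the capped term produces the ``$(-1)^{n}$-part'' of $V_K$. (Equivalently, via the oriented skein relation one obtains a short linear recursion for $V_{(D_J\cup -D_J)(n)}(t)$ whose characteristic polynomial factors, up to conventions, with roots $t^{3/2}$ and $-t^{1/2}$, so $V_K(t)$ is forced to be a combination of a $t^{n}$-mode and a $(-1)^{n}$-mode; pairing $V_K(t)$ against $V_K(t^{-1})$ then isolates $V_J(t)V_J(t^{-1})$.) Rearranging gives exactly
\[
t^{n}V_K(t)+(-1)^{n}V_K(t^{-1}) \;=\; \big(t^{n}+(-1)^{n}\big)\,V_J(t)V_J(t^{-1}).
\]

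The step I expect to be the real obstacle is this pair of identifications together with the Reidemeister-I/framing bookkeeping: one must pin down the small twist-number symmetric unions (and the auxiliary link underlying $D^{(e)}$) as the correct connected sum and mirror, and keep the framing corrections consistent across the values $n$, $n-1$, $n-2$, so that the answer lands on the stated normalization rather than off by a unit. Granting the main identity, the ``in particular'' is immediate: if $K$ is amphichiral then $V_K(t) = V_K(t^{-1})$, so the identity becomes $\big(t^{n}+(-1)^{n}\big)V_K(t) = \big(t^{n}+(-1)^{n}\big)V_J(t)V_J(t^{-1})$, and since $t^{n}+(-1)^{n}$ is a nonzero element of the integral domain $\mathbb{Z}[t^{\pm1}]$ we may cancel it to conclude $V_K(t) = V_J(t)V_J(t^{-1})$.
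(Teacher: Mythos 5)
First, a point of reference: the paper does not prove Theorem \ref{ThmTanaka} at all --- it is imported from Tanaka \cite{Ta15} and used as a black box --- so there is no in-paper argument to compare yours against. Judged on its own terms, your skeleton is the right one (and is essentially how the result is actually proved): a single crossing in the twist region gives a skein recursion in $n$ whose general solution is a sum of two geometric ``modes,'' one contributing the $t^{\pm n}$-part and one the $(\pm1)^n$-part, with the initial condition $(D_J\cup-D_J)(0)=J\#(-J)$ supplying $V_J(t)V_J(t^{-1})$.

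The genuine gap is exactly at the step you flagged. The capped diagram $D^{(e)}$ is the two-component link $L$ obtained by replacing the twist region with the horizontal tangle, and its bracket is \emph{not} controlled by $V_J(t)V_J(t^{-1})$: it is an independent piece of data. Concretely, for the Kinoshita--Terasaka presentations $(D_U\cup-D_U)(n)$ of the unknot one has $V_J(t)V_J(t^{-1})=1$, yet the resulting knots (e.g.\ the Kinoshita--Terasaka knot itself) have nontrivial Jones polynomial; writing $V_{K_n}(t)=\lambda_1(t)^n a(t)+\lambda_2(t)^n b(t)$ with $\lambda_2^n$ the $(\pm1)^n$-mode, this forces $b\neq 0$, and $b$ is precisely the coefficient carried by $\langle D^{(e)}\rangle$. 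So no mirror-symmetry or writhe bookkeeping will express $\langle D^{(e)}\rangle$ through $V_J$, and the main line of your argument cannot be completed as written. The repair is the route you relegated to a parenthesis, and it should be the main proof: one never evaluates $\langle D^{(e)}\rangle$. Since $(D_J\cup-D_J)(-n)$ is the mirror of $(D_J\cup-D_J)(n)$, one has $V_{K_n}(t^{-1})=V_{K_{-n}}(t)=\lambda_1(t)^{-n}a(t)+\lambda_2(t)^{-n}b(t)$, and the combination $t^nV_{K_n}(t)+(-1)^nV_{K_n}(t^{-1})$ is rigged so that, for the correct (convention-dependent) values of $\lambda_1,\lambda_2$, everything collapses to $(t^n+(-1)^n)\bigl(a(t)+b(t)\bigr)$ with $a+b=V_{K_0}(t)=V_{J\#(-J)}(t)=V_J(t)V_J(t^{-1})$; only the sum $a+b$ is ever needed, so the unknown $V_L$ drops out. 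To make this a proof you must pin down $\lambda_1,\lambda_2$ explicitly and check that both the recursion and the mirror identity hold for all integers $n$, not just $n>0$. Your handling of the ``in particular'' clause (cancelling the nonzero Laurent polynomial $t^n+(-1)^n$) is correct.
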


Let $U$ denote the unknot. Then in light of Theorem \ref{ThmTanaka}, Tanaka proposes the following.

\begin{proposition}[\cite{Ta15}, Proposition 7.1]{\label{Prop1}}
If there exists a nontrivial, amphichiral knot that admits a symmetric union presentation of the form $(D_U \cup -D_U)(n)$, then its Jones polynomial is trivial and, in particular, the answer to the above problem is affirmative.
\end{proposition}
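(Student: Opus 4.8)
The plan is to obtain the statement as an immediate specialization of Theorem \ref{ThmTanaka} to the case $J = U$. The only external fact needed is that the unknot has trivial Jones polynomial, $V_U(t) = 1$, which is standard. So suppose $K$ is a nontrivial knot that is amphichiral and admits a symmetric union presentation of the form $(D_U \cup -D_U)(n)$ for some integer $n$. These are exactly the hypotheses under which Theorem \ref{ThmTanaka} applies, with $J$ taken to be the unknot, so there is nothing to verify there.

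Now I would simply run the ``in particular'' clause of Theorem \ref{ThmTanaka}: since $K$ is amphichiral, $V_K(t) = V_J(t)V_J(t^{-1})$, and with $J = U$ this reads $V_K(t) = V_U(t)V_U(t^{-1}) = 1\cdot 1 = 1$. Thus $K$ is a knot whose Jones polynomial coincides with that of the unknot. By assumption $K$ is nontrivial, so $K$ is a nontrivial knot with the same Jones polynomial as the unknot, which is precisely a positive answer to Kirby Problem 1.88(C). This completes the argument.

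There is no real obstacle here; the proposition is a one-line consequence of Tanaka's formula. If one prefers not to invoke the amphichirality shortcut as a black box, one can reconstruct it from the main identity of Theorem \ref{ThmTanaka}: mirroring a knot replaces $V_K(t)$ by $V_K(t^{-1})$, and amphichirality $K \cong -K$ forces $V_K(t) = V_K(t^{-1})$. Substituting this and $V_U(t) = 1$ into $t^n V_K(t) + (-1)^n V_K(t^{-1}) = (t^n + (-1)^n) V_U(t) V_U(t^{-1})$ yields $(t^n + (-1)^n) V_K(t) = t^n + (-1)^n$, and since $t^n + (-1)^n$ is a nonzero element of the integral domain $\mathbb{Z}[t, t^{-1}]$, cancellation gives $V_K(t) = 1$.
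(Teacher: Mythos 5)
Your proof is correct and is essentially the argument the paper intends: Proposition \ref{Prop1} is presented as an immediate consequence of Theorem \ref{ThmTanaka} specialized to $J=U$, using $V_U\equiv 1$ and the amphichirality clause $V_K(t)=V_J(t)V_J(t^{-1})$. Your optional reconstruction via cancelling the nonzero factor $t^n+(-1)^n$ in $\mathbb{Z}[t,t^{-1}]$ is also sound.
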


We give two proofs that the knots of the form stated in Proposition \ref{Prop1} are in fact trivial and hence cannot be used to answer the above problem in the affirmative. In the first proof, we show that the double branched cover of such a knot corresponds to a pair of cosmetic surgeries on a connected sum of knots in $S^3$. We then use the surgery characterization of the unknot in $S^3$ and the fact that composite knots do not admit cosmetic surgeries, a result of Tao \cite{Tao19}, to conclude such a pair does not exist. The second proof uses Khovanov homology's unknot detection result and the Lee spectral sequence to obstruct the amphichirality of these knots.

\begin{mainthm}{\label{ThmA}}
Let $K$ admit a symmetric union presentation of the form $(D_U \cup -D_U)(n)$. Then, $K$ is amphichiral if and only if $K$ is the unknot.
\end{mainthm}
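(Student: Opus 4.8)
The reverse implication is immediate, as the unknot is amphichiral. For the forward implication, suppose $K=(D_U\cup -D_U)(n)$ is amphichiral; we may assume $n\neq 0$, since $n=0$ gives $K=U\#U=U$ directly. The plan is to pass to the double branched cover $\Sigma_2(K)$, exhibit it as a Dehn surgery on a knot of a very restricted type, and then use amphichirality to produce on that knot a cosmetic surgery that is known to be impossible.

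I would begin with the structural step. By the Montesinos trick applied to the twist region, changing its $n$ crossings to $0$ converts $K$ into $K_0:=(D_U\cup -D_U)(0)=U\#U=U$; since $K$ and $K_0$ agree outside a ball $B$ about the twist region and the double branched cover of $(B,B\cap K)$ — a rational tangle — is a solid torus, $\Sigma_2(K)$ is obtained from $\Sigma_2(K_0)=\Sigma_2(U)=S^3$ by $1/n$-surgery on the core $C$ of that solid torus. The diagrammatic mirror symmetry of the symmetric union — interchanging the two copies of $D_U$ — applied to the untwisted picture $K_0=U\#U$, shows that $C$ is a connected sum $C=C_1\#\overline{C_1}$ of a knot $C_1\subset S^3$ read off from $D_U$ with its mirror image. (As a check, $\det K=\det(U)^2=1$, so $\Sigma_2(K)$ is an integral homology sphere, consistent with a surgery coefficient $1/n$ on a nullhomologous knot.)

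Now the deduction. An orientation-reversing self-homeomorphism $g$ of $S^3$ with $g(K)=K$ lifts to an orientation-reversing self-homeomorphism of $\Sigma_2(K)$, so $\Sigma_2(K)\cong -\Sigma_2(K)$ as oriented manifolds. Since $-S^3_{1/n}(C)=S^3_{-1/n}(\overline{C})$ and $\overline{C}=\overline{C_1}\#C_1=C_1\#\overline{C_1}=C$, this says that $S^3_{1/n}(C)$ and $S^3_{-1/n}(C)$ are orientation-preservingly homeomorphic, with distinct slopes since $n\neq 0$; that is, $C$ admits a purely cosmetic surgery pair. By Tao's theorem \cite{Tao19} a composite knot admits no cosmetic surgeries, so $C$ cannot be composite; as $C_1\#\overline{C_1}$ is composite whenever $C_1$ is nontrivial, $C_1$ — and hence $C$ — is the unknot. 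Therefore $\Sigma_2(K)=S^3_{1/n}(U)=S^3$, and since the unknot is the only knot with $S^3$ as its double branched cover (the Smith conjecture for involutions), we conclude $K=U$.

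The crux — and the only place where real work is required — is the structural step: running the Montesinos trick carefully on the twist region of a symmetric union and, from the diagram's mirror symmetry, extracting the decomposition $C=C_1\#\overline{C_1}$ that makes Tao's theorem applicable; some care with orientation conventions is also needed, since it is precisely the orientation-preserving identification $S^3_{1/n}(C)\cong S^3_{-1/n}(C)$ that gives a genuine cosmetic pair, as opposed to the orientation-reversing one that exists automatically because $C$ is amphichiral. As an alternative I would note the following Khovanov-homological argument: if $K$ is amphichiral then Theorem \ref{ThmTanaka} forces $V_K(t)=1$, and $s(K)=0$ because the Rasmussen invariant changes sign under mirroring; using the $(i,j)\mapsto(-i,-j)$ symmetry of $\widetilde{Kh}(K)$ together with the Lee spectral sequence converging to $\mathbb{Q}$ in bidegree $(0,0)$, one argues that $\widetilde{Kh}(K)$ has rank $1$, whence $K=U$ by Kronheimer–Mrowka \cite{KM11}.
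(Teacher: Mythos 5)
Your proposal is correct and follows essentially the same route as the paper's first proof: pass to $\Sigma_2(K)=S^3_{1/n}(\overline{K})$ via the Montesinos trick, use the mirror symmetry to see that the surgery curve is $K'\#-K'$ (Lemma \ref{L4}), derive a purely cosmetic pair $S^3_{1/n}\cong S^3_{-1/n}$ from amphichirality, and invoke Tao's theorem for composite knots plus the double-branched-cover characterization of the unknot. Your closing remark is, in outline, the paper's second (Khovanov-homological) proof, so nothing further is needed.
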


We observe that all known symmetric union diagrams of prime amphichiral ribbon knots have more than one twist region \cite{La00} ,\cite{La06}, \cite{La17}. Hence, we question whether an analogue of the same result holds for any partial knot. In this case, we are able to generalize the topological proof of Theorem \ref{ThmA}; we show that the double branched cover corresponds to a pair of cosmetic surgeries in a $\mathbb{Q}$-homology sphere and analyzing the JSJ-decompositions of those surgeries we conclude that they can never be homeomorphic. Combining with Theorem \ref{ThmA}, we obtain the following. This is the main theorem of the paper.

\begin{mainthm}{\label{ThmB}}
Let $K$ admit a symmetric union presentation of the form $(D_J \cup -D_J)(n)$ for a knot $J$. Then, $K$ is amphichiral if and only if $K = J \# -J$.
\end{mainthm}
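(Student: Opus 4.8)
The plan is to upgrade the topological argument behind Theorem A to the setting of an arbitrary partial knot $J$, working with the double branched cover $\Sigma_2(K)$ throughout. The first step is to identify $\Sigma_2(K)$ for $K = (D_J \cup -D_J)(n)$. As in the proof of Theorem A, the symmetric-union structure exhibits $\Sigma_2(K)$ as the result of surgery along a link obtained by gluing two copies of the double branched cover of the tangle given by $D_J$ (one copy and its mirror) along the boundary sphere coming from the axis of symmetry, with the $n$ inserted crossings in the single twist region recorded as a $1/n$-type (or integral, depending on framing conventions) surgery coefficient on a distinguished knot $c$ sitting on that sphere. Concretely I expect to present $\Sigma_2(K)$ as $1/n$-surgery (and $\Sigma_2(K)$ with reversed orientation, relevant to amphichirality, as $-1/n$-surgery, i.e.\ $1/(-n)$-surgery) on a fixed knot $c$ inside a fixed $\mathbb{Q}$-homology sphere $Y$, where $Y$ is the double branched cover of $S^3$ along the ``partial knot sum'' and $c$ is a nullhomologous knot in $Y$ with known complement $M = Y \setminus \nu(c)$.

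The second step is to translate amphichirality of $K$ into a statement about these surgeries. Since $K$ amphichiral means $K \cong -K$ as oriented knots (or at least $K$ is equivalent to its mirror), we get an orientation-reversing homeomorphism of $S^3$ carrying $K$ to $K$, which lifts to an orientation-reversing homeomorphism $\Sigma_2(K) \to \Sigma_2(K)$; equivalently $\Sigma_2(K) \cong -\Sigma_2(K)$ as oriented manifolds. Combined with the first step this says the two Dehn fillings $M(1/n)$ and $M(-1/n) = -\,\overline{M}(1/n)$ are orientation-preservingly homeomorphic — a \emph{cosmetic-surgery–type} relation on the manifold $M$ with boundary a torus. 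The third step is to rule this out (for $n \neq 0$) by a JSJ/geometrization analysis of $M$. The key structural input is that $M$ is built by gluing together two copies of the exterior of the tangle $D_J$ along an incompressible torus (or a small collection of tori), so the JSJ decomposition of the closed fillings $M(\pm 1/n)$ is essentially pinned down: the pieces coming from $J$ and from $-J$ appear, glued in a way that records the sign of the filling slope, and an orientation-reversing self-homeomorphism would have to swap the $J$-piece with the $-J$-piece while simultaneously being compatible with the filling on $c$. One shows this forces the gluing parameter to be trivial, i.e.\ $n = 0$, which is exactly the degenerate case where the construction collapses and $K = J \# -J$. Conversely, that $J \# -J$ does admit such a symmetric-union presentation with $n=0$ (or the appropriate degenerate number of crossings) and is amphichiral is the easy direction, handled by an explicit diagram.

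The main obstacle I anticipate is the JSJ bookkeeping in step three: I must control the incompressibility of the gluing torus in $M$ and in the fillings $M(\pm 1/n)$, handle the exceptional cases where $J$ is a torus knot or the tangle exterior is Seifert fibered or small (so that extra self-homeomorphisms exist and the JSJ graph is too simple to obstruct anything directly), and carefully track orientations so that ``$-J$ versus $J$'' is genuinely detected rather than washed out. A subtlety is that amphichirality only gives $K \cong -K$, possibly with a reversal of orientation of the knot as well, so I should set things up so that the lift is an orientation-reversing homeomorphism of $\Sigma_2(K)$ regardless of how the meridian behaves, and I may need the companion result (already cited in the introduction) that composite knots admit no purely cosmetic surgeries, together with an analysis of which knots in a $\mathbb{Q}$-homology sphere can have two distinct surgeries giving orientation-reversing-diffeomorphic results, to close the small-piece cases. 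For $J$ the unknot this all degenerates to the situation of Theorem A, so Theorem B genuinely contains it, and I will invoke Theorem A directly for that base case rather than re-deriving it.
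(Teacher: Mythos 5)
Your overall strategy --- realizing $\Sigma_2(K)$ as $\frac{1}{n}$-filling of a fixed bounded manifold, translating amphichirality into an orientation-preserving homeomorphism $\tilde{X}(\frac{1}{n}) \cong \tilde{X}(-\frac{1}{n})$, and obstructing this by JSJ analysis --- is exactly the paper's approach for nontrivial $J$, and you are right to quarantine the unknot case into Theorem A. But there is a genuine structural gap in your step three. You propose to show that the JSJ obstruction ``forces the gluing parameter to be trivial, i.e.\ $n=0$.'' That is not the statement of the theorem (which is about the knot type of $K$, not the twist parameter), and more importantly it is false as stated: when the knot $K$ happens to be $J \# -J$ (which can occur with $n \neq 0$), the manifold $\Sigma_2(K) = \Sigma(J) \# \Sigma(-J)$ genuinely admits an orientation-reversing self-homeomorphism, so $\tilde{X}(\frac{1}{n})$ and $\tilde{X}(-\frac{1}{n})$ really are orientation-preservingly homeomorphic and no JSJ argument can distinguish them. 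The missing idea is the reduction to the case where $K$ is \emph{prime}: one first invokes the fact that a composite symmetric union with one twist region decomposes as $K_1 \# -K_1 \# K_2$ with $K_2$ again a prime symmetric union with one twist region, and Schubert's unique factorization to transfer amphichirality to $K_2$. Only after assuming $K$ prime (hence $K \neq J\#-J$, $\Sigma_2(K) \not\cong S^3$, and $\tilde{X}$ irreducible) does the JSJ machinery have traction, and the argument then derives an outright contradiction rather than concluding $n=0$.

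Two further points where your sketch underestimates the work. First, the obstruction is not that a homeomorphism ``would have to swap the $J$-piece with the $-J$-piece'': the decisive asymmetry lives in the central Seifert-fibered piece (a composing/cable space of Euler number $\pm\frac{1}{n}$), and the contradiction comes from the behavior of Euler numbers under fiber-preserving homeomorphisms, the fact that the rational longitudes of the complementary $\mathbb{Q}$-homology solid tori are never meridional, and (for $|n|=1$, where the cable space degenerates to $T^2 \times I$) Johannson's finiteness theorem applied to an infinite-order boundary map. Second, the result that composite knots in $S^3$ admit no purely cosmetic surgeries is used only in the unknot case (Theorem A); for nontrivial $J$ you are doing surgery in a $\mathbb{Q}$-homology sphere that is not $S^3$, so you cannot lean on it and must instead carry out the Seifert-fibered and lens-space case analysis (e.g.\ when the tangle cover $Y$ is a solid torus) by hand.
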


Another application of our results regards the following fundamental question in the study of symmetric unions.

\begin{question}[\cite{La06}]{\label{Q1}}
Does every ribbon knot admit a symmetric union presentation?
\end{question}

All but 15 prime ribbon knots with 12 and fewer crossings \cite{La00}, \cite{Se14} and all 2-bridge knots \cite{La06} have been shown to be symmetric unions. Among those 15 ribbon knots, $12a990$ and $12a1225$ are amphichiral. Thus, it follows from our result that if any of them admits a symmetric union diagram, then such a diagram has at least two twist regions. This is indeed the case for all nontrivial amphichiral ribbon kots and the following corollary is immediate.

\begin{corollary}{\label{cor1}}
Let $K$ be a nontrivial prime amphichiral ribbon knot. If $K$ admits a symmetric union presentation of the form $(D \cup -D)(n_1, \dots , n_k)$, then $k \geq 2$.
\end{corollary}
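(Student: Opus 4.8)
The plan is to deduce Corollary \ref{cor1} directly from Theorem \ref{ThmB} together with the uniqueness of prime factorization of knots; the only thing to establish is that the case $k=1$ cannot occur. So I would argue by contradiction: suppose $K$ admits a symmetric union presentation $(D \cup -D)(n_1)$ with a single twist region. Writing $J$ for the partial knot of $D$ (the knot type represented by the diagram $D$), this presentation is literally an instance of the form $(D_J \cup -D_J)(n_1)$, so Theorem \ref{ThmB} applies. Since $K$ is amphichiral, it gives $K = J \# -J$.

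Next I would invoke Schubert's theorem on the uniqueness of prime decomposition: a connected sum $A \# B$ of knots is prime only if one of $A$, $B$ is the unknot. Applying this to the prime knot $K = J \# -J$ forces $J$ (equivalently $-J$) to be trivial, whence $K = U \# -U = U$. This contradicts the hypothesis that $K$ is nontrivial. Consequently $K$ admits no symmetric union presentation with exactly one twist region, so any symmetric union presentation $(D \cup -D)(n_1, \dots, n_k)$ of $K$ must have $k \geq 2$.

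I do not anticipate a genuine obstacle; the corollary is a formal consequence of Theorem \ref{ThmB}. The only points meriting a line of care are (i) verifying that the $k=1$ specialization of the notation $(D \cup -D)(n_1, \dots, n_k)$ really is the hypothesis of Theorem \ref{ThmB}, with $J$ the partial knot of $D$, and (ii) observing that nothing is lost when the partial knot happens to be the unknot: by Theorem \ref{ThmA} a one-twist-region symmetric union of $U$ is amphichiral only if it is the unknot, so no nontrivial example can evade the argument. With these remarks in place the proof is complete.
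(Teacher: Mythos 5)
Your proposal is correct and is exactly the argument the paper intends when it calls the corollary ``immediate'' from Theorem~\ref{ThmB}: a one--twist--region presentation forces $K = J \# -J$, which by primality forces $J$ trivial and hence $K$ trivial, a contradiction. (Invoking Schubert's uniqueness of prime decomposition is slightly more than needed --- the definition of primality already rules out $K$ being a connected sum of two nontrivial knots --- but this is harmless.)
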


The knot $10_{42}$ was shown to be the first ribbon knot that do not admit a symmetric union diagram with one twist region by an ad hoc computation of the Jones polynomial by Tanaka \cite{Ta15}. The first ribbon knots with a structural property that obstructs them from admitting such a diagram are composite ribbon knots which are neither $K\#-K$ for a nontrivial knot $K$, nor  $K_1\#K_2$ where each summand $K_i$ is a symmetric union, such as $3_1\#8_{10}, 3_1\#8_{11}$ etc. \cite{kearney:concordance}, \cite{livingston:concordance}. This was also proven by Tanaka \cite{Ta19} and later a short proof was given by the author using the ideas in this paper \cite{Ko20}. Theorem \ref{ThmB}, therefore, gives the first infinite family of prime ribbon knots with a structural property – being amphichiral – that obstructs them from admitting the simplest type of symmetric union presentations.

\subsection*{Organization}
In Section \ref{S2}, we define symmetric unions, give examples, necessary definitions and a Dehn filling description of the double branched cover. In Section \ref{S3}, we prove Theorem \ref{ThmA}. In Section \ref{S4}, we prove Theorem \ref{ThmB} in the case where $J$ is nontrivial.

\subsection*{Acknowledgement}
The author is indebted to her advisor Cameron Gordon for his constant encouragement and many insightful discussions. The author thanks Robert Lipshitz and Sucharit Sarkar for explaining why Kronheimer and Mrowka's unknot detection result can also be stated in terms of the Khovanov homology with coefficients in $\mathbb{Q}$. The author is especially grateful to Robert Lipshitz for his interest in this work and helpful correspondence. The author is also grateful to Christoph Lamm, whose beautiful pictures of symmetric unions in the Appendix of \cite{La17} helped her greatly to make the observation which now became Theorem \ref{ThmB}.

\section{Definitions and the set up}{\label{S2}}
\subsection{Symmetric unions}
Consider $S^3$ identified as $\mathbb{R}^3 \cup \{\infty\}$ where $\{\infty\}$ denotes the point at infinity. We assume a knot diagram is depicted in the $xy$-plane. Let $\mathcal{S}$ denote the sphere $\{x=0\} \cup \{\infty\}$ and $\tau$ be the reflection through $\mathcal{S}$ which is an orientation-reversing involution on $S^3$. We call the balls bounded by $\mathcal{S}$ $\textit{the left ball}$ $B_L$ and $\textit{the right ball}$ $B_R$ according to the natural position of their projections in the $xy$-plane. We define a symmetric union as follows.

\begin{figure}[ht]{\label{tanglereplacement}}
    \centering
    \includegraphics[width=13cm]{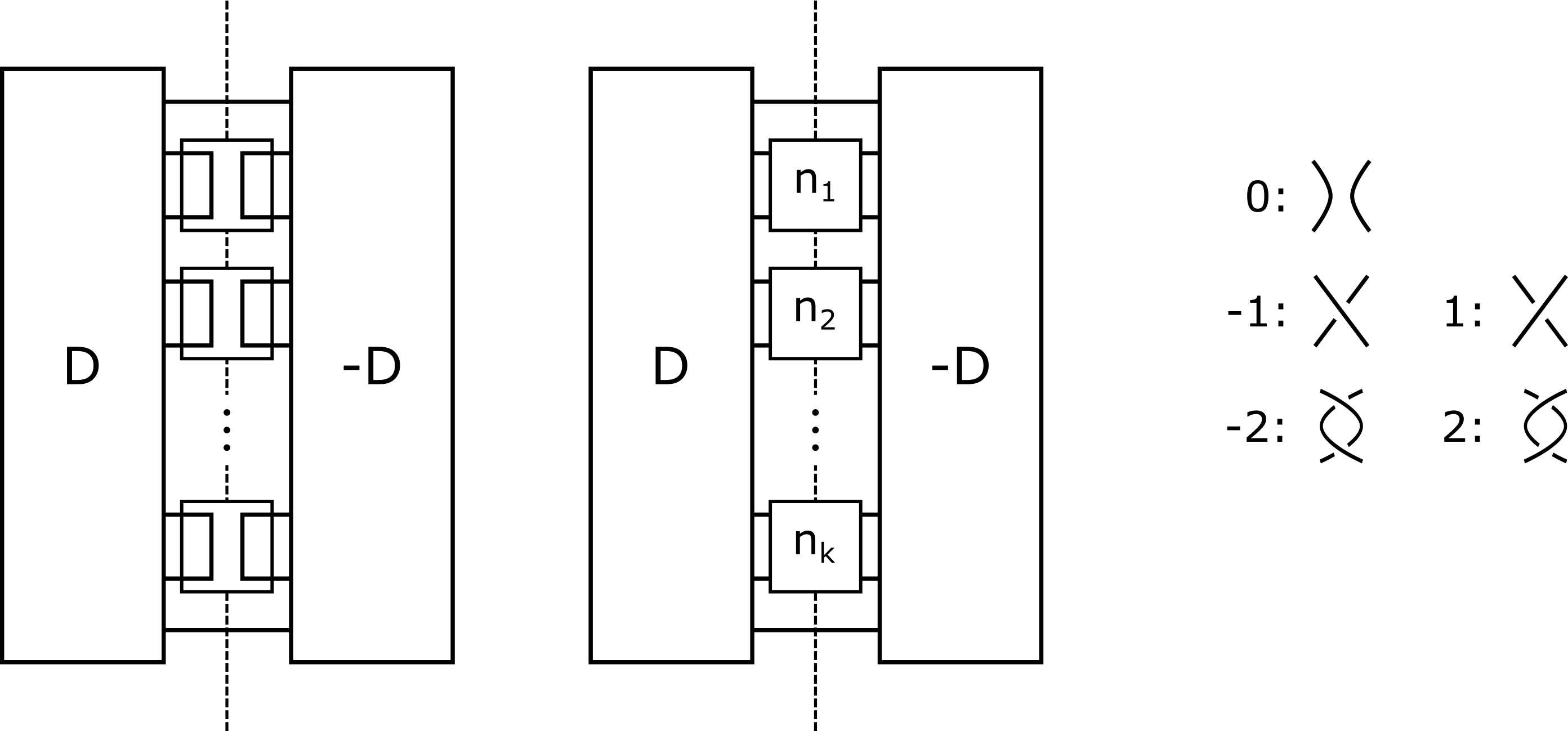}
    \caption{The construction of a symmetric union}
    \label{figure:construction}
    \end{figure}

\begin{definition}
Let $D$ be a knot diagram in $B_L$ and $B_1, \dots , B_k$ be the balls along the y-axis such that each of which intersects $D$ in a trivial arc. Let $-D$ be the diagram obtained by reflecting $D$ across $\mathcal{S}$. We take the connected sum of $D$ and $-D$ such that the resulting diagram $D\#-D$ remains invariant under $\tau$ and hence $D\#-D$ intersects each $B_i$ in a trivial tangle. A symmetric union of $D$ is then defined to be a knot diagram obtained from $D \# -D$ by replacing the trivial tangle in each $B_i$ by $\frac{1}{n_i}$-tangle for some $n_i \in \mathbb{Z}$ and is denoted by $(D \cup -D)(n_1,\dots ,n_k)$. See Figure \ref{figure:construction} for the schematic description.
\end{definition}

The construction was first introduced by Kinoshita and Terasaka \cite{KT57} in which there is a single tangle replacement, then was generalized to multiple tangle replacements by Lamm \cite{La00}. The y-axis, drawn as the dashed line in Figure 1, is called \textit{the axis of mirror symmetry}. A knot which admits such a diagram is called a $\textit{symmetric union}$. The isotopy type of $D$ is called the \textit{partial knot} of $(D\cup -D)(n_1, \dots, n_k)$. For a symmetric union $K$ and its partial knot $J$, we may choose to denote $K$ by $(D_J \cup -D_J)(n_1, \dots,n_k)$ to specify $J$ and call $K$ a symmetric union of $J$. The balls $B_1, \dots, B_k$ are called the twist regions and the number of nonzero elements in $\{n_1, \dots ,n_k\}$ is called the number of twist regions of $(D \cup -D)(n_1, \dots ,n_k)$. See Figure \ref{figure:symmetricunions} for examples of symmetric unions.

\begin{figure}
    \centering
    \includegraphics[height=3cm]{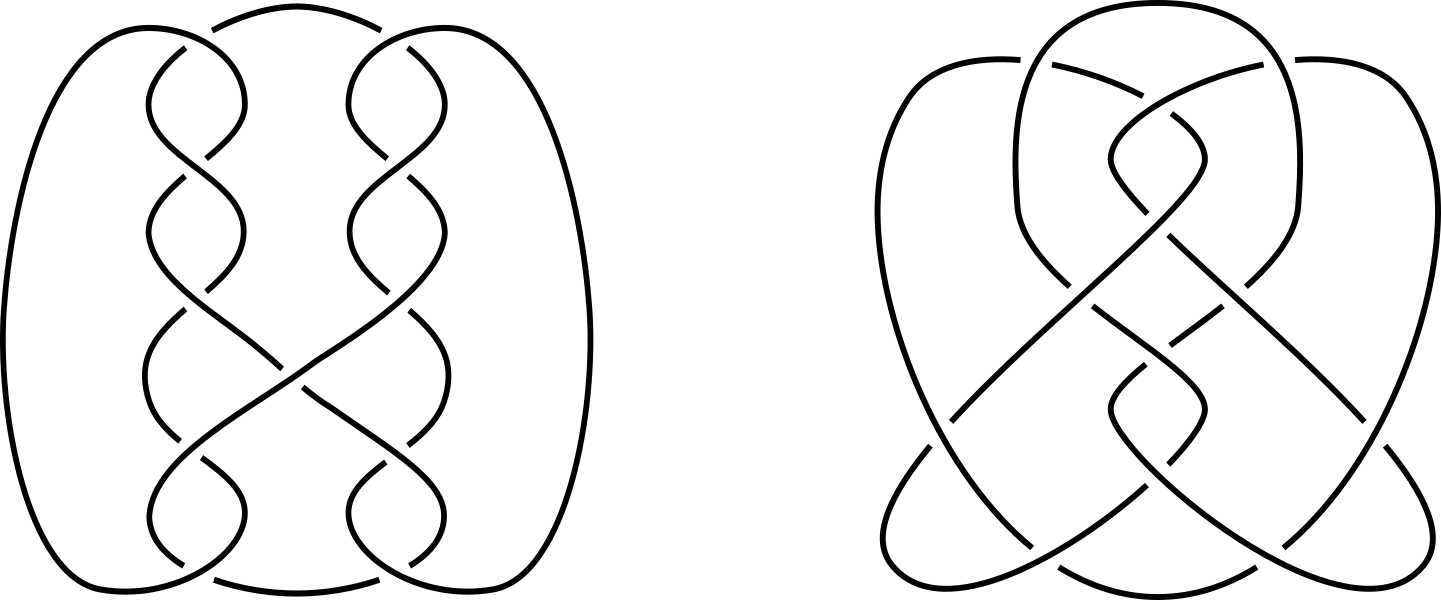}
    \caption{On the left is the knot $10_{153}$ presented as $(D_U \cup -D_U)(1)$ and on the right is the knot 12n462, which is in the family of Kanenobu knots, presented as $(D_{4_1} \cup -D_{4_1})(2,-2)$.}
    \label{figure:symmetricunions}
    \end{figure}

\subsection{Double branched covers and Dehn fillings}
Let $\Sigma(K)$ denote the double cover of $S^3$ branched along the knot $K$ and $\Sigma(B,A)$ denote the double cover of a ball $B$ branched along a tangle $A$. Let $M$ be a compact, connected, orientable 3-manifold and $K$ be a knot in $M$. The \textit{exterior} of $K$ in $M$ is $M_K := M \setminus \text{int } \nu (K)$, where $\nu (K)$ is a regular neighborhood of $K$. Recall that \textit{a slope} on $T^2$ is the unoriented isotopy class of a non-trivial simple closed curve that is a primitive class $\in H_1(T^2)/ \pm 1$. Since $H_1(T^2) \cong \mathbb{Z} \oplus \mathbb{Z}$, the slopes on $T^2$ may be parameterized by reduced rational numbers $\frac{p}{q} \in \mathbb{Q} \cup \{ \frac{1}{0} \}$ once a basis is fixed for $H_1(T^2)$. We then define the $\frac{p}{q}$-\textit{Dehn surgery} on $K$ in $M$, $M_K(\frac{p}{q})$, to be the manifold obtained by gluing a solid torus $V$ to $M_K$ so that the boundary of a meridional discs of $V$ is glued to $\frac{p}{q}$: $M_K(\frac{p}{q}) = M_K \cup V$. Note that when $M \cong S^3$, we make an exception and stick to the commonly used notation $S^3_{p / q}(K)$. It is natural to extend the notion of Dehn surgery on a knot to that of Dehn filling on a 3-manifold along some torus boundary component. So let $M$ be a 3-manifold with torus boundary, and suppose that $\frac{p}{q}$ is a slope on $\partial M$. Similarly, we define the \textit{$\frac{p}{q}$-Dehn filling} on $M$, $M(\frac{p}{q})$, to be the manifold obtained by gluing a solid torus $V$ to $M$ so that the boundary of a meridional disc of $V$ is glued to $\frac{p}{q}$.

In this article, we focus on knots that admit symmetric union diagrams with one twist region, i.e. the crossings are inserted in only one region. So suppose $K$ is such a symmetric union with a partial knot $J$ and let $(D \cup -D)(n)$ represent $K$ for some nonzero $n$. The relevant set up is already described in \cite{Ko20}. We repeat it here for convenience. Let $B$ denote the ball containing the twist region for the inserted crossings. $B$ lifts to a solid torus $\tilde{B}$ in the double branched cover $\Sigma_2(J\#-J)$. Let $\overline{K}$ denote the core of $\tilde{B}$. As $K$ is obtained from $J \# -J$ replacing a $\frac{1}{0}$-tangle in $B$ by a $\frac{1}{n}$-tangle, using the Montesinos trick \cite{Mo75} we see that the tangle replacement corresponds to a $\frac{1}{n}$-Dehn surgery along $\overline{K}$ giving us a description of $\Sigma(K)$ in terms of $\Sigma(J)$: $\Sigma(K) = \Sigma(J \# -J)_{\overline{K}}(\frac{1}{n})$. Let $X$ denote the ball $S^3 \setminus \text{int } {B}$ and $\tilde{X}$ denote the double branched cover of $X$ branched along the tangle $t = K \cap  X$.  One can easily see that $\tilde{X}$ is $\Sigma(J \# -J)_{\overline{K}}$ and hence, $\Sigma(K)$ is $\frac{1}{n}$-Dehn filling of $\tilde{X}$, $\tilde{X}(\frac{1}{n})$. This description relies on viewing $B$ as the pillow case. Let $(\mu, \lambda)$ be the canonical meridian-longitude pair induced by $B$ where $\mu$ is a meridian of $\overline{K}$ determined by some choice of an orientation on $\overline{K}$.

\section{The case where the partial knot is the unknot}{\label{S3}}

In this section, we prove Theorem \ref{ThmA} and hence conclude that the knots proposed by Tanaka cannot be used to find nontrivial knots with trivial Jones polynomial. We present two proofs of Theorem \ref{ThmA}. The first proof, given in Subsection \ref{S3.1}, uses a surgery argument made in the double branched cover and the second proof, given in Subsection \ref{S3.2}, uses Khovanov homology. In Subsection \ref{S3.2} we also ask a question about symmetric union diagrams of amphichiral ribbon knots based on an observation and discuss how Khovanov homology can be used to answer this question.

\subsection{Cosmetic Surgeries in $S^3$}{\label{S3.1}}
All 3-manifolds in this article are assumed to be compact and oriented. $-M$ will denote the manifold $M$ with opposite orientation. For given 3-manifolds $M$ and $M^\prime$, we write $M \cong M^\prime$ if there exists an orientation-preserving homeomorphism $h: M \rightarrow M^\prime$ and we say $M$ and $M^\prime$ are homeomorphic as oriented manifolds. If $h$ is orientation-reversing, then it will be explicitly stated. 

Two surgeries $S^3_r(K)$ and $S^3_s(K)$ on a knot $K$ in $S^3$ are called $\textit{cosmetic}$ if they are homeomorphic as unoriented manifolds, and \textit{purely cosmetic} if they are homeomorphic as oriented manifolds. Cosmetic surgeries that are not purely cosmetic are called $\textit{chirally cosmetic.}$ It is not hard to find chirally cosmetic surgeries: $S^3_r(K)$ and $S^3_{-r}(K)$ are orientation-reversingly homeomorphic if $K$ is amphichiral. However, there are no known examples of purely cosmetic surgeries to date and in fact it was conjectured by Gordon \cite{Go91} that there are no purely cosmetic surgeries on a nontrivial knot.

\begin{conjecture}[The Purely Cosmetic Surgery Conjecture in $S^3$]{\label{ConjCos}}
Let $K$ be a nontrivial knot in $S^3$. If $r\neq s$, then $S^3_r(K) \ncong S^3_s(K)$.
\end{conjecture}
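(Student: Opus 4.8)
The plan is to treat Conjecture~\ref{ConjCos} as a program of successively constraining a hypothetical purely cosmetic pair $S^3_r(K)\cong S^3_s(K)$ (with $r\neq s$, orientation preserving) until the admissible slopes are exhausted. By Gordon--Luecke the exterior $S^3_K$ determines $K$, so the question is genuinely about two distinct Dehn fillings of a single knot exterior yielding oriented-homeomorphic results. The first reduction is to fix the shape of the slopes. Comparing first homology forces $|H_1|$ to agree, so both slopes share the same $p$; a correction-term computation in Heegaard Floer homology, as carried out by Ozsv\'ath--Szab\'o and sharpened by Ni--Wu, then forces $s=-r$ and the arithmetic condition $q^2\equiv -1 \pmod p$ for $r=p/q$, and moreover ties the candidate slopes to the Seifert genus $g(K)$.

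Second, I would apply a graded hierarchy of obstructions, each killing a layer of the remaining cases. The Casson--Walker invariant (Boyer--Lines) differs between $p/q$ and $-p/q$ filling by a nonzero multiple of $\Delta_K''(1)$, equivalently $a_2(K)$, so the conjecture holds outright whenever $a_2(K)\neq 0$. An independent degree-three finite-type invariant $v_3(K)$ extracted from the Jones polynomial provides a second obstruction, eliminating many knots with $a_2=0$. Hanselman's immersed-curve reformulation of knot Floer homology then compresses what is left to a very small list: a purely cosmetic pair can occur only for slopes $\pm 2$ with $g(K)=2$, or for slopes $\pm 1/q$. When $K$ is composite I would instead invoke Tao's theorem \cite{Tao19}, which already forbids cosmetic surgeries entirely, so the residual problem concerns prime knots of small genus or with slopes $\pm 1/q$.

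The hard part --- and precisely why Conjecture~\ref{ConjCos} remains open in full generality --- is the surviving family in which every available invariant degenerates at once: prime knots with $a_2=v_3=0$ whose only candidate slopes are $\pm 1/q$ (together with the isolated genus-two, slope-$\pm 2$ case). On this family the two fillings agree on homology, on the linking form, on the Reidemeister--Turaev torsion, on the $d$-invariants, and on all the Heegaard Floer data recorded by the immersed curves, so none of the tools above can separate them. The strategy I would pursue is to produce a strictly finer invariant that must change between $S^3_{1/q}(K)$ and $S^3_{-1/q}(K)$ --- natural candidates are higher-degree finite-type invariants, instanton or $SU(2)$ Casson-type invariants, or a bordered-Floer gluing refinement that sees more than the isomorphism type of $\widehat{HF}$ --- and to prove it is simultaneously nonvanishing and computable on this class. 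Showing that some such invariant is genuinely sensitive on the residual slopes is the crux I expect to be the main obstacle, and it is what stands between the partial results above and a complete proof.
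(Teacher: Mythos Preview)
The statement you are addressing is stated in the paper as a \emph{conjecture}, not a theorem: the paper does not claim or attempt a proof of Conjecture~\ref{ConjCos}. It merely records the conjecture, lists classes of knots for which it is known (2-bridge, alternating fibered, genus one, cables, connected sums, 3-braids, low crossing number, pretzels, torus knots), and later invokes the special case due to Tao (Theorem~\ref{ThmTao}) for composite knots in the proof of Theorem~\ref{ThmA}. There is therefore no ``paper's own proof'' to compare against.

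Your proposal is consistent with this: you explicitly acknowledge that the conjecture remains open and that the residual case of prime knots with $a_2=v_3=0$ and slopes $\pm 1/q$ (plus the isolated genus-two, slope $\pm 2$ case) survives all known obstructions. What you have written is a faithful summary of the current state of the art (Ni--Wu, Boyer--Lines, Hanselman, Tao) together with a speculative strategy for the remaining cases, not a proof. The gap you yourself name --- producing an invariant that separates $S^3_{1/q}(K)$ from $S^3_{-1/q}(K)$ on the residual family --- is genuine and is exactly why the conjecture is open; your proposal does not close it, and does not claim to.
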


Conjecture \ref{ConjCos} is verified for 2-bridge knots and alternating fibered knots by \cite{IJMS19}, for knots of Seifert genus one by \cite{Wa06}, for cable knots by \cite{TaoCable}, for connected sums of non-trivial knots by \cite{Tao19}, for 3-braid knots by \cite{Va21}, for prime knots with at most 16 crossings by \cite{Ha20}, and for pretzel knots by \cite{SS20}. By Moser's classification of Dehn surgeries on torus knots \cite{Mo71} and the classification of Seifert fibered spaces, Conjecture \ref{ConjCos} also holds for torus knots. As will be seen below, it is relevant to us to record the theorem that proves the conjecture for composite knots.

\begin{theorem}[\cite{Tao19}] {\label{ThmTao}}
Let $K$ be a composite knot in $S^3$. If $r\neq s$, then $S^3_r(K) \ncong S^3_s(K)$.
\end{theorem}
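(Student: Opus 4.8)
The plan is to prove Theorem~\ref{ThmTao} through the JSJ (torus) decomposition of Dehn fillings on a composite knot exterior: a homeomorphism must match these decompositions, and one distinguished piece will be seen to remember the surgery coefficient. (This is a cited result; what follows is a from-scratch outline of how I would argue it.)

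First I would set up and reduce. Write $K = K_1 \# K_2$ with both $K_i$ nontrivial, and let $S$ be the decomposing sphere; it meets $K$ in two points, so $A := S \cap E(K)$ is an essential annulus both of whose boundary curves carry the meridional slope $\infty$ on $\partial\nu(K)$. If $r$ or $s$ equals $\infty$, the corresponding surgery is $S^3$, which is atoroidal, while the other is toroidal by the next step; so assume $r = p/q$ and $s = p'/q'$ with $q,q' \geq 1$ and $(p,q) \neq (p',q')$. Since $r \neq \infty$, the annulus $A$ is disjoint from the $r$-filling, and capping $\partial A$ off by an annulus in the filling solid torus yields a torus $\widehat T_r \subset S^3_r(K)$ which is incompressible (inherited from $A$, using $\Delta(\infty, r) = q \geq 1$) and not boundary-parallel (both complementary sides contain a nontrivial knot exterior). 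Hence $S^3_r(K)$ and $S^3_s(K)$ are irreducible and toroidal.

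Next I would pin down the JSJ decomposition of $S^3_r(K)$. Cutting along $\widehat T_r$ and then along the JSJ tori of $E(K_1)$ and of $E(K_2)$ exhibits the canonical decomposition: its pieces are those of $\mathrm{JSJ}(E(K_1))$, those of $\mathrm{JSJ}(E(K_2))$, together with the Seifert-fibered region assembled from the $r$-surgery solid torus and a collar of the decomposing annulus $A$ (concretely, the filling solid torus with two parallel copies of the connect-sum meridian removed), which is Seifert fibered over a planar surface with a single exceptional fiber of multiplicity $q$, degenerating to a product when $q = 1$. The essential point is that this extra piece, together with the slopes its regular fiber traces on the adjacent copies of $\partial E(K_1)$ and $\partial E(K_2)$, records the pair $(p,q)$ as an explicit injective function of $r$ — this is the sense in which the surgery coefficient on a composite knot remains visible after filling. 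When a summand $K_i$ is a torus knot, $E(K_i)$ is itself Seifert and may amalgamate with this region into a single Seifert piece (over a disk with three exceptional fibers); since the torus-knot data is fixed, the enlarged piece still determines $r$.

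Finally, given an orientation-preserving homeomorphism $h\colon S^3_r(K) \to S^3_s(K)$, it carries the JSJ decomposition to the JSJ decomposition; by Gordon--Luecke the knot-exterior pieces are recognizable, so $h$ permutes $\{E(K_1), E(K_2)\}$ (a genuine swap forcing $K_1 \cong \pm K_2$) and sends the distinguished Seifert region of $S^3_r(K)$ to that of $S^3_s(K)$ compatibly with the adjacent gluings. An orientation-preserving homeomorphism of a knot exterior can only preserve, or simultaneously reverse, the meridian and longitude — it cannot exchange a slope with its mirror — so the fiber-slope data on the $\partial E(K_i)$ is preserved, and equating it on both sides forces $r = s$, a contradiction. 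I expect the crux, and the main obstacle, to be exactly this last step: computing the oriented Seifert invariants of the extra region and the adjacent fiber slopes sharply enough to recover $r$ itself (not merely $\pm r$), and clearing the degenerate cases $q = 1$ and torus-knot summands where JSJ pieces merge. (An alternative, more analytic route would invoke the Ni--Wu obstructions — a purely cosmetic pair must have $s = -p/q$ with $q^2 \equiv -1 \pmod p$ — and then rule these out for composite $K$ using additivity of the signature, the Heegaard Floer $\tau$, and the Alexander polynomial; but the decomposition argument is more self-contained.)
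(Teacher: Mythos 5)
The paper does not prove Theorem~\ref{ThmTao}; it is quoted from \cite{Tao19} and used as a black box in the proof of Theorem~\ref{ThmA}. Your outline is essentially the strategy of the cited proof, and it is also the strategy the paper itself deploys in Section~\ref{S4} for the analogous statement in double branched covers: isolate the composing space in the exterior, observe that $p/q$-filling turns it into a cable space $S(0,2;(q,\beta))$ whose regular fiber is meridional on the adjacent knot-exterior boundaries, and use the relative Euler number together with the fact that a homeomorphism of knot exteriors preserves the meridian--longitude pair up to simultaneous sign to recover the slope. The points you flag as the crux are indeed where the real work lies --- recovering $p$ itself rather than $\pm p$, and the $q=1$ case where the filled composing space is $T^2\times I$ and is absorbed, so that the slope must instead be read off from the gluing map between the two knot exteriors (compare Case $i'$ in Section~\ref{S4}) --- but your worry about torus-knot summands amalgamating does not arise: the regular fiber of a Seifert-fibered knot exterior on its boundary is the (integral) Seifert slope, never the meridian, so condition (ii) of Proposition~\ref{PropAFW} keeps that piece separate from the filled composing space.
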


Now suppose $K$ is a symmetric union of the form $(D_J \cup -D_J)(n)$. Recall that $\Sigma(K) = \Sigma(J\#-J)_{\overline{K}}(\frac{1}{n})$. The lift of $\mathcal{S}$, $\tilde{\mathcal{S}}$, decomposes $\Sigma(J \# -J)$ as $\Sigma(J) \# \Sigma(-J)$. Because $\tau$ leaves $J \# -J$ invariant, it lifts to an orientation-reversing involution $\tilde{\tau}$ on $\Sigma(J \# -J)$ whose fixed points set is $\tilde{\mathcal{S}}$. Let $\Sigma(J)_o$ and $\Sigma(-J)_o$ denote the punctured $\Sigma(J)$ and $\Sigma(-J)$, respectively, that are obtained by cutting $\Sigma(J\#-J)$ along $\tilde{\mathcal{S}}$. Hence, we observe the following.

\begin{lemma}{\label{L4}}
$\overline{K}$ decomposes as $K^\prime \# -K^\prime$ where $K^\prime = \Sigma(J)_o \cap \overline{K}$ and $-K^\prime = \Sigma(-J)_o \cap \overline{K}$. 
\end{lemma}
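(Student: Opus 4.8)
The plan is to realize the decomposition of $\overline{K}$ already in the symmetric union diagram and then lift it through the double branched cover, exploiting the equivariance furnished by $\tilde{\tau}$. Set $\delta:=\mathcal{S}\cap B$. Because the twist ball $B$ lies along the axis of mirror symmetry we have $\tau(B)=B$, and $\delta$ is a disk cutting $B$ into the two half-balls $B^{+}:=B\cap B_{L}$ and $B^{-}:=B\cap B_{R}$, which are interchanged by $\tau$. Since $B$ meets $D$ in a single trivial arc and meets $-D=\tau(D)$ in the $\tau$-image of that arc, the trivial tangle $t_{0}:=B\cap(J\#-J)$ is the union of one arc lying in $B^{+}$ and its $\tau$-image lying in $B^{-}$; in particular the two strands of $t_{0}$ are interchanged by $\tau$.

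First I would pick an arc $\gamma\subset B$ whose preimage in $\Sigma(J\#-J)$ is $\overline{K}$. Since $\tilde{B}$ is the double cover of $B$ branched over $t_{0}$, it is a solid torus, and the preimage of any arc in $B$ that joins the two strands of $t_{0}$, has interior disjoint from $t_{0}$, and is unknotted relative to $t_{0}$, is isotopic to the core of $\tilde{B}$, i.e. to $\overline{K}$. So fix a point $q$ on the strand contained in $B^{+}$, choose an embedded arc $\alpha\subset\overline{B^{+}}$ from $q$ to a point $m$ in the interior of $\delta$ with $\alpha\cap t_{0}=\{q\}$ and $\alpha\cap\delta=\{m\}$, and set $\gamma:=\alpha\cup\tau(\alpha)$. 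Then $\gamma$ is $\tau$-invariant, joins the two strands of $t_{0}$, and meets $\delta$ transversally in the single point $m$. Consequently $\overline{K}$, being the preimage of $\gamma$, is invariant under $\tilde{\tau}$ and meets $\tilde{\mathcal{S}}$ transversally in exactly the two points lying over $m$, both of which lie on $\tilde{\mathcal{S}}=\mathrm{Fix}(\tilde{\tau})$.

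Now $\tilde{\mathcal{S}}$ is the $2$-sphere realizing $\Sigma(J\#-J)=\Sigma(J)\#\Sigma(-J)$, and $\overline{K}$ meets it transversally in two points, so by the very definition of the connected sum of knots $\overline{K}=K'\# K''$, where $K':=\Sigma(J)_{o}\cap\overline{K}$ and $K'':=\Sigma(-J)_{o}\cap\overline{K}$ are the two arcs, each completed by a trivial arc in the corresponding puncture ball. It remains to identify $K''$ with $-K'$. The involution $\tilde{\tau}$ is orientation-reversing, has fixed-point set $\tilde{\mathcal{S}}$, interchanges $\Sigma(J)_{o}$ and $\Sigma(-J)_{o}$, and satisfies $\tilde{\tau}(\overline{K})=\overline{K}$; hence it restricts to an orientation-reversing homeomorphism of pairs $(\Sigma(J)_{o},K')\to(\Sigma(-J)_{o},K'')$, and, extending across the puncture balls, to an orientation-reversing homeomorphism of pairs $(\Sigma(J),K')\to(\Sigma(-J),K'')$. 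Combined with the standard orientation-reversing identification $\Sigma(-J)\cong-\Sigma(J)$, this says precisely that $K''\subset\Sigma(-J)$ is the mirror image $-K'$, so $\overline{K}=K'\#-K'$.

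The step I expect to require the most care is the choice of $\gamma$: one must check that $\gamma=\alpha\cup\tau(\alpha)$ is genuinely unknotted relative to $t_{0}$, so that its preimage really is the core of $\tilde{B}$ and not some other knot in that solid torus. This is exactly where the triviality of the tangle $t_{0}=B\cap(J\#-J)$ is used — one can take $\alpha$, hence $\gamma$, inside a ball in which $t_{0}$ is in standard position — together with the uniqueness up to isotopy of the core of a solid torus. Everything else is routine bookkeeping of orientations and the standard behavior of double branched covers under connected sum.
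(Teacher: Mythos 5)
Your proof is correct and follows essentially the same route as the paper, which simply observes that $\overline{K}\cap\tilde{\mathcal{S}}$ consists of exactly two points and that $\overline{K}$ is invariant under $\tilde{\tau}$; you have carefully justified precisely these two facts (by an equivariant choice of the arc $\gamma$ whose preimage is the core of $\tilde{B}$) and then drawn the stated conclusion. The extra care about $\gamma$ being unknotted relative to $t_0$ is a reasonable elaboration of what the paper calls ``immediate by the construction.''
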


\begin{proof}
This is immediate by the construction as $\overline{K} \cap \tilde{S}$ is exactly two points and $\overline{K}$ is left invariant by $\tilde{\tau}$.
\end{proof}

With Lemma \ref{L4} at hand, Theorem \ref{ThmA} follows from the characterization of the unknot by its double branched cover, a famous theorem of Waldhause \cite{Wa69}, and Theorem \ref{ThmTao}.

\begin{proof}[Proof of Theorem \ref{ThmA} ]
The backward direction is obvious. Now suppose $K$ is amphichiral, then $\Sigma(K) \cong \Sigma(-K)$ as oriented manifolds. Since $J$ is the unknot, $\Sigma(J\#-J)$ is $S^3$. Then $\Sigma(K) = S^3_{1/n}(\overline{K})$ and $\Sigma(-K) = S^3_{-1/n}(\overline{K})$. Thus, we obtain a pair of purely cosmetic surgeries $S^3_{1/n}(\overline{K}) \cong S^3_{-1/n}(\overline{K})$ unless $\overline{K}$ is trivial. By Lemma \ref{L4} and Theorem \ref{ThmTao}, we conclude that $\overline{K}$ is indeed trivial. Therefore; $S^3_{1/n}(\overline{K}) \cong S^3_{-1/n}(\overline{K}) \cong S^3$ and by \cite{Wa69} $K$ is the unknot.

\end{proof}

\subsection{Khovanov Homology}{\label{S3.2}}
Theorem \ref{ThmA} now confirms that symmetric union diagrams of prime amphichiral ribbon knots have indeed more than one twist region. In addition to this, as seen in the appendix of \cite{La00} the numbers of inserted crossings in the symmetric union diagrams found for all prime amphichiral ribbon knots but 12a435 come in pairs with opposite signs. For instance, 12a477 is of the form $(D_{6_3} \cup - D_{6_3})(1,-1)$ and 12a458 is of the form $(D_{8_3} \cup -D_{8_3})(1,-1,1,-1,1,-1)$ etc. This is also the case for the Kanenobu knots: Recall that the Kanenobu knots $K_{p,q}$ are of the form $(D_{4_1} \cup -D_{4_1})(p,q)$ and it is shown in \cite{Ka86} that $K_{p,q}$ is amphichiral if and only if $p+q=0$. Suppose $K$ is an amphichiral ribbon knot that admits a symmetric union presentation, then we ask the following.

\begin{question}{\label{Q2}}
Does $K$ admit a symmetric union presentation $(D \cup -D)(n_1, \dots ,n_k)$ such that $k$ is even and the set $\{n_1 , \dots , n_k\}$ is composed of pairs of the form $(n,-n)$?
\end{question}

We wonder whether Khovanov homology may give information about the inserted crossings in symmetric union diagrams and can be used to answer Question \ref{Q2}. To provide evidence we present another proof of Theorem \ref{ThmA} by using Khovanov homology. We show that the inserted crossings induce bigradings in which the Khovanov homology is nonzero and use those to obstruct the amphichirality of these knots.

We begin reviewing some facts from Khovanov homology that will be used in the proof. For a link $L \subset S^3$, $Kh(L)$ will denote the Khovanov homology of $L$ with coefficients in $\mathbb{Q}$ and we use the notation $Kh^{i,j}(L)$ to specify the homology with homological grading $i$ and quantum grading $j$. 
Given a knot $K$ with a distinguished negative crossing, let $(\negativecrossing)$ denote a diagram of $K$, and $(\one)$ and $(\zero)$ denote the diagrams obtained by resolving $(\negativecrossing)$ in the two possible ways. Observe that $(\one)$ inherits an orientation from $(\negativecrossing)$, but for $(\zero)$ there is no orientation consistent with $(\negativecrossing)$. Then, as $(\zero)$ is a knot we may orient it as we please and get that
\begin{align*}
     c =   \text{number of negative crossings in $(\zero)$}   - \text{number of negative crossings in $(\negativecrossing)$}
\end{align*}
is well-defined. This long exact sequence 
\begin{equation}{\label{LES}}
\longrightarrow Kh^{i,j+1}(\one)\longrightarrow Kh^{i,j}(\negativecrossing) \longrightarrow Kh^{i-c,j-3c-1}(\zero) \longrightarrow Kh^{i+1,j+1}(\one) \longrightarrow 
 \end{equation}
 
 is one of the Khovanov's original results in \cite{Kh00}, but our notational convention follow Turner \cite{Tu17}.

 In \cite{Lee05}, Lee showed that $Kh(L)$ can be viewed as the first page of a spectral sequence, the Lee spectral sequence, that converges to the direct sum of $|L|$ copies of $\mathbb{Q} \oplus \mathbb{Q}$ with generators located in homological grading 0, where $|L|$ is the number of components of $L$. In particular, when $L$ is a knot, Rasmussen \cite{Ras10} added to this result that the quantum gradings of the generators of $E_\infty = \mathbb{Q} \oplus \mathbb{Q}$ always differ by 2, leading him define his $s$-invariant by taking the average of these two quantum gradings. The $s$-invariant provides a lower bound on the slice genus of a knot, $g_s(K)$; hence, it vanishes for slice knots; hence, for symmetric unions. Then we conclude that the two surviving generators of the Lee spectral sequence for the Khovanov homology of symmetric unions are located at bigradings $(0,-1)$ and $(0,1)$. It is also relevant for us to note that the differential of this spectral sequence on the $E_n$ page is of bidegree $(1,4n)$.
 
Another key ingredient of our argument is the detection of the unknot. Let $\overline{Kh}(K)$ denote the reduced Khovanov homology of $K$ over $\mathbb{Q}$.

\begin{theorem}[\cite{KM11}]
A knot $K$ in $S^3$ is the unknot if and only if $\overline{Kh}(K) \cong \mathbb{Q}$. 
\end{theorem}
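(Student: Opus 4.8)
The statement is a theorem of Kronheimer and Mrowka, so what follows is the strategy I would follow rather than a new argument. The forward direction is the easy one: a direct computation --- or the skein long exact sequence~\eqref{LES} applied to the one-crossing diagram of the unknot --- gives $\overline{Kh}(U)\cong\mathbb{Q}$. The content is the converse, so my plan is to produce a companion invariant that is both bounded above in total rank by $\dim_{\mathbb{Q}}\overline{Kh}(K)$ and already known to detect the unknot.

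The natural candidate is the reduced singular instanton knot homology $I^{\natural}(K)$, and the heart of the argument splits into two parts. First, I would construct a spectral sequence whose $E_2$-page is $\overline{Kh}(K)$ (up to mirroring, depending on orientation conventions) and which converges to $I^{\natural}(K)$; this immediately yields $\dim_{\mathbb{Q}}\overline{Kh}(K)\geq\dim_{\mathbb{Q}}I^{\natural}(K)$. Building this spectral sequence is by far the main obstacle: it requires setting up a Floer homology for knots from $SU(2)$-connections singular along the knot, organizing the cube of resolutions of a diagram into a filtered complex, and identifying the associated-graded differential with Khovanov's by matching the edge maps of the cube with signed counts of singular instantons over the corresponding saddle cobordisms. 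The convergence and the grading bookkeeping are the technically hardest points.

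Second, I would establish that $I^{\natural}$ detects the unknot, i.e. that $\dim I^{\natural}(K)=1$ forces $K=U$. This has two inputs: a nonvanishing statement, $\dim I^{\natural}(K)\geq 1$ for every $K$ (so that $\overline{Kh}(K)\cong\mathbb{Q}$ would force equality), and a genus-detection statement, namely that $I^{\natural}(K)$ carries an Alexander-type grading whose extremal nonzero degree equals the Seifert genus of $K$, so that total rank $1$ forces genus $0$ and hence triviality. The genus detection is the second deep ingredient: one obtains it by relating $I^{\natural}(K)$, via Floer's excision for instanton homology, to the sutured instanton homology $SHI$ of the knot exterior with two meridional sutures, and then invoking the theorem --- built on Gabai's sutured manifold hierarchies and the existence of taut foliations --- that $SHI$ of a taut balanced sutured manifold is nonzero and that its grading computes the Thurston norm.

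Finally, a remark on coefficients, since the statement above is phrased over $\mathbb{Q}$ while the original argument is integral: the singular instanton chain complex and the excision isomorphisms are defined over $\mathbb{Z}$, and tensoring with $\mathbb{Q}$ preserves both the inequality $\dim_{\mathbb{Q}}\overline{Kh}(K;\mathbb{Q})\geq\dim_{\mathbb{Q}}I^{\natural}(K;\mathbb{Q})$ and the detection conclusion, so the rational statement is equivalent to the integral one. Granting the spectral sequence and the genus-detection input, the final deduction is a one-line rank count.
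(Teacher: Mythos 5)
This theorem is quoted from \cite{KM11} and the paper offers no proof of it, so there is nothing internal to compare against; your outline is a faithful summary of Kronheimer and Mrowka's actual argument --- the spectral sequence from reduced Khovanov homology to the reduced singular instanton homology $I^{\natural}$, and unknot detection for $I^{\natural}$ via excision to sutured instanton homology together with genus detection coming from Gabai's sutured hierarchies and taut foliations. Your closing remark on coefficients is also the right one: the statement here is over $\mathbb{Q}$ while \cite{KM11} works integrally, and the passage from the integral to the rational statement (via the rank equality $\dim_{\mathbb{Q}}\overline{Kh}(K;\mathbb{Q})=\operatorname{rank}_{\mathbb{Z}}\overline{Kh}(K;\mathbb{Z})$ and the spectral sequence with $\mathbb{Q}$ coefficients) is exactly the point the paper flags in its acknowledgements.
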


Even though the theorem is stated in terms of reduced Khovanov homology, from the way the reduced Khovanov complex is defined from the Khovanov complex (see Section 3 in \cite{Kh00} for more details), one can see that a similar result holds for Khovanov homology. For that purpose, suppose $Kh(K)$ is 2-dimensional. Then it consists only of the Lee generators, which are located at gradings $(0,s-1)$ and $(0,s+1)$. Hence, the Khovanov complex of $K$ decomposes as a direct sum of an acyclic complex and the complex $0 \rightarrow \mathcal{A} \rightarrow 0$, where $\mathcal{A} = \mathbb{Q}[x]/(x)$. Tensoring this complex over $\mathcal{A}$ with $\mathbb{Q}[x]/(x) = \mathbb{Q}$ yields a complex with 1-dimensional homology. Thus by \cite{KM11} we obtain \\

\begin{corollary}{\label{Cor1}}
A knot $K$ in $S^3$ is the unknot if and only if $Kh(K)$ is 2-dimensional.
\end{corollary}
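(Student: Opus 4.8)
The plan is to handle the two implications separately; the forward one is the classical computation that $Kh^{i,j}(U)$ equals $\mathbb{Q}$ for $(i,j)\in\{(0,-1),(0,1)\}$ and vanishes otherwise, so $\dim_{\mathbb{Q}}Kh(U)=2$. Everything lies in the converse, so assume $\dim_{\mathbb{Q}}Kh(K)=2$. The first step is to collapse the Lee spectral sequence. Its $E_1$-page is $Kh(K)$, and by Lee \cite{Lee05} together with Rasmussen's refinement \cite{Ras10} it converges, for a knot, to a two-dimensional $E_\infty$-page whose classes sit in bidegrees $(0,s(K)-1)$ and $(0,s(K)+1)$. Since $\dim E_1=2=\dim E_\infty$, there is no room for any differential on any page, so $E_1=E_\infty$; in particular $Kh(K)$ is concentrated in homological grading $0$, with a single generator in each of the quantum gradings $s(K)\pm 1$.

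The second step converts this into the hypothesis of the unknot-detection theorem. Following \cite{Kh00}, the reduced and unreduced Khovanov complexes fit into a short exact sequence of complexes $0\to \overline{CKh}(K)\{-1\}\to CKh(K)\to \overline{CKh}(K)\{+1\}\to 0$ coming from the basepoint, whose associated long exact sequence links $\overline{Kh}(K)$ and $Kh(K)$. Because $Kh(K)$ is now supported in homological grading $0$ and $\overline{CKh}(K)$ is a finite complex, the long exact sequence away from grading $0$ reduces to a telescope of isomorphisms on $\overline{Kh}(K)$, forcing $\overline{Kh}(K)$ to be concentrated in homological grading $0$ as well; the remaining three-term piece then reads $0\to \overline{Kh}^{0}(K)\{-1\}\to Kh^{0}(K)\to \overline{Kh}^{0}(K)\{+1\}\to 0$, so $2\dim_{\mathbb{Q}}\overline{Kh}(K)=\dim_{\mathbb{Q}}Kh(K)=2$. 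Thus $\overline{Kh}(K)\cong\mathbb{Q}$, and Kronheimer--Mrowka's theorem \cite{KM11} gives that $K$ is the unknot. (One could instead argue at the chain level, splitting $CKh(K)$ as $\big(0\to\mathcal{A}\to 0\big)\oplus(\text{contractible})$ over the Frobenius algebra $\mathcal{A}=\mathbb{Q}[x]/(x^2)$ and base-changing along $\mathbb{Q}=\mathcal{A}/(x)$; the long-exact-sequence route is cleaner because it avoids having to check that this splitting is $\mathcal{A}$-equivariant, equivalently that the basepoint action on $Kh^{0,\bullet}(K)$ is nonzero.)

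The step I expect to require the most care is the bookkeeping in the second paragraph: one must set up the basepoint short exact sequence with the correct quantum shifts and verify that the telescoping argument genuinely pins $\overline{Kh}(K)$ down to a single homological grading — this uses both outputs of the first step (concentration in grading $0$ and the dimension count) together with finiteness of the Khovanov complex. The collapse of the Lee spectral sequence and the final appeal to unknot detection are then immediate.
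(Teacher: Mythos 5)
Your proof is correct, and it follows the same overall strategy as the paper: collapse the Lee spectral sequence by the dimension count to see that $Kh(K)$ consists only of the two Lee generators in homological grading $0$, reduce to the statement $\overline{Kh}(K)\cong\mathbb{Q}$, and invoke Kronheimer--Mrowka \cite{KM11}. The one place you genuinely diverge is the reduction step. The paper argues at the chain level: it asserts that the Khovanov complex splits as a direct sum of an acyclic complex and $0\to\mathcal{A}\to 0$ (where $\mathcal{A}$ should read $\mathbb{Q}[x]/(x^2)$; the $(x)$ in the text is a typo) and then base-changes over $\mathcal{A}$ to $\mathbb{Q}$ --- exactly the alternative you describe in your closing parenthetical. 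You instead run the basepoint long exact sequence relating $\overline{Kh}(K)$ and $Kh(K)$ and telescope. Both routes work; yours has the advantage of not needing the splitting to be $\mathcal{A}$-equivariant (equivalently, of not having to note that the acyclic summand is a complex of free $\mathcal{A}$-modules so that base change preserves acyclicity), at the cost of the grading bookkeeping you flag, which you carry out correctly: concentration of $Kh(K)$ in homological grading $0$ together with boundedness kills $\overline{Kh}^i(K)$ for $i\neq 0$ by telescoping, and the surviving three-term exact sequence gives $2\dim_{\mathbb{Q}}\overline{Kh}(K)=\dim_{\mathbb{Q}}Kh(K)=2$.
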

    
     \begin{figure}[ht]{\label{FigKh}}
    \centering
    \includegraphics[height=3.3cm]{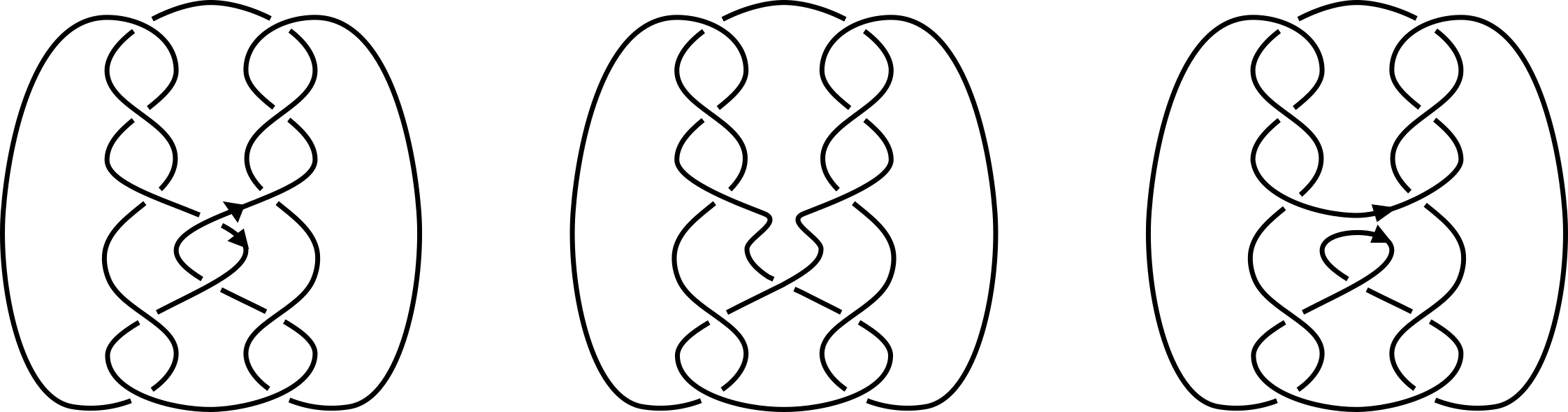}
    \caption{The resolutions of a crossing of a symmetric union diagram of the Kinoshita-Terasaka knot along the axis of mirror symmetry.}
    \label{resolutions}
    \end{figure}

The sequence of coefficients of the Jones polynomial of an amphichiral link is palindromic. Similar to this, the Khovanov homology of an amphichiral link $L$ satisfies the symmetric relation $Kh^{i,j}(L) = Kh^{-i,-j}(L)$. Hence, Theorem \ref{ThmA} follows from the lemma below. Note that since the diagram $(D \cup -D)(-n_1,\dots ,-n_k)$ describes the mirror image of $(D \cup -D)(n_1,\dots ,n_k)$, we will only prove the lemma for positive integers. 

\begin{lemma}{\label{lemma:khovanov}}
For a given diagram $D_U$ of the unknot, let $K_n$ be the symmetric union $(D_U \cup -D_U)(n)$ where $n \in \mathbb{N}$. Suppose there exists $n$ such that $K_n$ is non-trivial and then let $k$ be the minimal positive integer such that $K_k$ is nontrivial. Then, for every nonnegative integer $m$ there exists a bigrading $(i,j)$ such that $Kh^{i,j}(K_{k+m}) \not\cong Kh^{-i,-j}(K_{k+m})$.
\end{lemma}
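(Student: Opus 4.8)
The plan is to run an induction on $m$ using Khovanov's unoriented skein exact sequence \eqref{LES} at a crossing of the twist region, together with the Lee spectral sequence and the unknot detection Corollary \ref{Cor1}. The first step is to identify the two resolutions of such a crossing. Orienting $K_{k+m}$, the two strands running through the twist region are the two strands of the band joining $D_U$ to $-D_U$, hence anti-parallel; so the oriented resolution of a twist-region crossing is the cap--cup smoothing, which severs the band and separates the diagram into $D_U\subset B_L$ and $-D_U\subset B_R$. Since each piece is an unknot, this resolution is the two-component unlink $U_2$, whose Khovanov homology $Kh(U)^{\otimes 2}$ is concentrated in homological grading $0$; the other resolution keeps the band with one fewer crossing, i.e.\ it is $K_{k+m-1}$. (This identification of one resolution as the \emph{split} unlink is exactly where the partial knot being the unknot is used.) Treating the case in which the twist-region crossings are negative, the positive case being symmetric, \eqref{LES} then applies with $(\negativecrossing)$ a diagram of $K_{k+m}$, $(\one)=U_2$ and $(\zero)=K_{k+m-1}$; here $c=-1$ (re-orienting the knot $(\zero)$ changes no crossing signs), the $U_2$-terms adjacent to $Kh^{i,j}(K_{k+m})$ in \eqref{LES} sit in homological gradings $i$ and $i+1$, and the $(\zero)$-term is $Kh^{i+1,j+2}(K_{k+m-1})$. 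In particular $Kh^{i,j}(K_{k+m})\cong Kh^{i+1,j+2}(K_{k+m-1})$ whenever $i\notin\{0,-1\}$.

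From this I would extract two claims. The first is that $Kh^{i,j}(K_{k+m})=0$ for all $i\ge 1$ and all $m\ge 0$, proved by induction on $m$: the base case uses that $K_{k-1}$ is the unknot (by minimality of $k$ if $k\ge 2$, and because $K_0=D_U\#-D_U$ is a connected sum of unknots if $k=1$), so $Kh(K_{k-1})$ lies in homological grading $0$; the inductive step uses the displayed isomorphism, which gives $Kh^{i,j}(K_{k+m})\cong Kh^{i+1,j+2}(K_{k+m-1})=0$ for $i\ge 1$ since $i+1\ge 2$. The second claim is that any knot $L$ with $Kh^{i,j}(L)=0$ for all $i\ne 0$ is the unknot: the Lee differential on the $E_n$-page has homological degree $+1$, so it must vanish on every page, forcing $Kh(L)=E_\infty$, which is $2$-dimensional, so $L$ is the unknot by Corollary \ref{Cor1}.

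Next I would deduce that $K_{k+m}$ is nontrivial for every $m\ge 0$. This is the hypothesis for $m=0$; if $K_{k+m_0}$ were the unknot for some $m_0\ge 1$, then applying \eqref{LES} to $K_{k+m_0}$ at every $i\le -2$ gives $0=Kh^{i,j}(K_{k+m_0})\cong Kh^{i+1,j+2}(K_{k+m_0-1})$, so $Kh^{i',j'}(K_{k+m_0-1})=0$ for all $i'\le -1$; combined with the first claim, $Kh(K_{k+m_0-1})$ is concentrated in homological grading $0$, hence $K_{k+m_0-1}$ is the unknot by the second claim, and iterating $m_0$ times makes $K_k$ the unknot, a contradiction.

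Finally, suppose for contradiction that $K_{k+m}$ is amphichiral for some $m\ge 0$, so $Kh^{i,j}(K_{k+m})\cong Kh^{-i,-j}(K_{k+m})$ for all $(i,j)$. By the first claim applied at $-i$, this forces $Kh^{i,j}(K_{k+m})=0$ also for $i\le -1$, so $Kh(K_{k+m})$ is concentrated in homological grading $0$; by the second claim $K_{k+m}$ is then the unknot, contradicting the previous paragraph. Concretely, the asymmetric bigrading is produced as follows: since $K_{k+m}$ is not the unknot, the second claim yields some $(i,j)$ with $i\ne 0$ and $Kh^{i,j}(K_{k+m})\ne 0$; by the first claim necessarily $i\le -1$, and then $Kh^{-i,-j}(K_{k+m})=0$ because $-i\ge 1$. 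Theorem \ref{ThmA} follows as indicated just before the statement. The step I expect to be most delicate is the opening one: checking carefully that one resolution of a twist-region crossing really is the split unlink $U\sqcup U$, and fixing the orientation and sign conventions so that \eqref{LES} is invoked with the correct correction term $c$; everything afterward is a clean propagation of the ``no homology above grading $0$'' condition plus the degree reason forcing the Lee spectral sequence to collapse.
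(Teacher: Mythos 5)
The decisive step of your argument --- identifying the oriented resolution of a twist-region crossing with the split unlink $D_U \sqcup -D_U$ --- is where the proof breaks down, and you were right to flag it as the delicate point. The two strands passing through a twist region are not the two strands of the connect-sum band: by the construction, the ball $B_1$ meets $D_U$ in a trivial arc $\alpha$ and meets $-D_U$ in the mirror arc $\tau(\alpha)$, and the twists are inserted between $\alpha$ and $\tau(\alpha)$; the band joining $D_U$ to $-D_U$ crosses the axis elsewhere. Consequently the cap--cup resolution does not sever the band. It replaces $\{\alpha, \tau(\alpha)\}$ by two arcs that each cross the axis of symmetry, and the resulting two-component link $L$ has components of the form $\kappa_i \# -\kappa_i$, where $\kappa_1,\kappa_2$ are the closures of the two arcs into which $D_U$ is cut by $B_1$ and by the connect-sum site. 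For a nontrivial diagram $D_U$ of the unknot these closures need not be unknots, and the two components need not be unlinked. The paper's proof uses only that $L$ is amphichiral (it is preserved by $\tau$), and the remark following the proof makes explicit that ``$L$ is the $2$-component unlink'' is an extra hypothesis that holds for the pretzel and Kanenobu families but fails in general (e.g.\ for $12n605$, where $q_{\max}(L)=6$).

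Once $Kh(L)$ is allowed to live in nonzero homological gradings, your first claim ($Kh^{i,j}(K_{k+m})=0$ for $i\geq 1$) is no longer forced by the long exact sequence: the terms $Kh^{i,j+1}(L)$ and $Kh^{i+1,j+1}(L)$ flanking $Kh^{i,j}(K_{k+m})$ need not vanish, so the inductive isomorphism onto $Kh^{i+1,j+2}(K_{k+m-1})$ fails, and with it the half-plane vanishing, the nontriviality of all $K_{k+m}$, and the final contradiction. The rest of your bookkeeping (the value $c=-1$, the placement of the unknot term at $(i+1,j+2)$, and the Lee-spectral-sequence argument that a knot with Khovanov homology concentrated in homological degree $0$ is trivial) agrees with the paper and is fine. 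To repair the argument you would have to do essentially what the paper does: exploit the symmetry $Kh^{i,j}(L)\cong Kh^{-i,-j}(L)$ of the amphichiral link $L$, locate a maximal non-Lee bigrading $(a,b)$ of $Kh(K_k)$ with $b\geq 1$, propagate its maximality to all $K_{k+m}$, and play it against a nonvanishing group at $(-a-m,-b-2(m+1))$.
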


\begin{proof}
Any crossing of $K_n$ along the axis of symmetry gives rise to a triple $\{(\negativecrossing), (\zero),(\one) \}$, where $K_n = (\negativecrossing)$ and $K_{n-1} = (\zero)$. The resolution $(\one)$ is a two-component link $L$ and $L$ is amphichiral as it is left invariant by $\tau$. An example of such a triple is demonstrated in Figure \ref{resolutions}. Setting $n =k$, by the minimality of $k$ we obtain that $K_{k-1}$ is the unknot. For symmetric union diagrams, the number of positive and negative crossings away from the axis are the same and equal to the number of crossings of the diagram of its partial knot. When $n >0$, the inserted crossings are negative. Therefore, the difference $c$ in negative crossings between $(\zero)$ and $(\negativecrossing)$ is -1 and the triple gives rise to the following long exact sequence
\begin{equation}{\label{LES1}}
 \overset{\delta_*}{\longrightarrow} Kh^{i,j+1}(L)\longrightarrow Kh^{i,j}(K_k) \longrightarrow Kh^{i+1,j+2}(U) \overset{\delta_*}{\longrightarrow} .
 \end{equation}

The unknot has Khovanov homology $\mathbb{Q}_{(-1)} \oplus \mathbb{Q}_{(1)}$ supported in homological grading 0, therefore; whenever $i \neq -1,0$ or $j \neq -3, -1$ the sequence becomes
\begin{equation}{\label{Liso}}
0 \longrightarrow Kh^{i,j+1}(L)\longrightarrow Kh^{i,j}(K_k) \longrightarrow 0,
\end{equation}
and we obtain the isomorphisms $Kh^{i,j}(K_k) \cong Kh^{i,j+1}(L)$. By Corollary \ref{Cor1} $Kh(K_k)$ has at least one generator that is different than the Lee generators. Then let $(a,b)$ be the bigrading of a non Lee generator of $Kh(K_k)$ such that $b$ is the maximal quantum grading and $a$ is the maximal homological grading among those bigradings in quantum grading $b$. For convenience, for a non-trivial link we will call the bigrading that satisfies the above condition \textit{maximal}.

\begin{claim}{\label{L3}}
$b \geq 1$. Furthermore; if $b = 1$, then $a \neq 0$.
\end{claim}

\begin{proof}
Assume $b < 1$. Since the bigrading $(a,b)$ does not survive to the $E_\infty$ page and $b$ is the maximal quantum grading, $Kh^{a-1, b-4m}(K_k) \neq 0$ for some $m \in \mathbb{N}$. Because $b - 4m \leq -5$ and $-b+4m-2 \geq 3$, the isomorphism (\ref{Liso}) and $L$'s being amphichiral imply
\begin{align*}
  0 \neq  Kh^{a-1,b-4m}(K_k) \cong Kh^{a-1,b-4m+1}(L) \cong Kh^{-a+1,-b+4m-1}(L) \cong Kh^{-a+1,-b+4m-2}(K_k),
\end{align*}
but $-b + 4m -2 > 1$ which contradicts the maximality of $b$.

For the second assertion, we assume $(a,b) = (0,1)$. Then $rank$ $ Kh^{0,1}(K_k) \geq 2$. Using this with the isomorphism (\ref{Liso}) and $L$'s being amphichiral, we conclude that $rank$ $Kh^{0,-2}(L) \geq 2$. When $(i,j)=(0,-3)$, the long exact sequence (\ref{LES1}) becomes
\begin{align*}
& \longrightarrow \mathbb{Q} \longrightarrow Kh^{0,-2}(L) \longrightarrow Kh^{0,-3}(K_k)\longrightarrow  0,
\end{align*}
via exactness $Kh^{0,-3}(K_k)\neq 0$. Because this bigrading does not survive to the $E_\infty$ page, either $Kh^{1,-3+4m}(K_k)$ or $Kh^{-1,-3-4m}(K_k)$ is nonzero for some $m \in \mathbb{N}$. As $(0,1)$ is the maximal bigrading, we have $Kh^{-1,-3-4m}(K_k) \neq 0$. As $-3-4m \leq -7$ and  $2+4m \geq 6$, the isomorphism (\ref{Liso}) and the fact that $L$ is amphichiral imply
\begin{align*}
  0 \neq  Kh^{-1,-3-4m}(K_k) \cong Kh^{-1,-2-4m}(L) \cong Kh^{1,2+4m}(L) \cong Kh^{1,1+4m}(K_k),
\end{align*}
a contradiction.
\end{proof}

Note that as $b \geq 1$, by the isomorphisms (\ref{Liso}) $Kh^{a,b}(K_k) \cong Kh^{a,b+1}(L)$, and hence, $(a,b+1)$ is the maximal bigrading for $Kh(L)$.

\begin{claim}{\label{C1}}
$(a,b)$ is the maximal grading of $Kh(K_{k+m})$ for all $m \in \mathbb{Z}_{\geq 0}$.
\end{claim}

\begin{proof}
We proceed by induction on $m$. The base case, $m=0$, is immediate as $(a,b)$ is the maximal bigrading for $Kh(K_k)$. Resolving any crossing of $K_{k+m}$ along the axis of symmetry results in an unoriented triple $\{(\negativecrossing), (\zero),(\one) \}$, where $K_{k+m} = (\negativecrossing)$, $K_{k+m-1} = (\zero)$, and $L = (\one)$. From the induction hypothesis and $b+1$'s being the maximal quantum grading for $Kh(L)$, whenever $j > b$ the long exact sequence corresponding to this skein triple becomes
\begin{align*}
& 0 \longrightarrow Kh^{i,j}(K_{k+m})\longrightarrow  0,
\end{align*}
implying $Kh^{i,j}(K_{k+m}) = 0$, and when $(i,j)=(a,b)$ the sequence becomes
\begin{align*}
& 0 \longrightarrow Kh^{a,b+1}(L) \longrightarrow Kh^{a,b}(K_{k+m})\longrightarrow  0,
\end{align*}
implying the isomorphim $Kh^{a,b}(K_{k+m})= Kh^{a,b+1}(L) \neq 0$. 

\end{proof}

\begin{claim}{\label{C2}}
$Kh^{-a-m,-b-2(m+1)}(K_{k+m}) \neq 0$ for all $m \geq 0$.
\end{claim}

\begin{proof}
We also prove this by induction on $m$. By Lemma \ref{L3} $(-a,-b)\neq (0,-1)$ or $(0,1)$, then the long exact sequence (\ref{LES1}) induces an injection $Kh^{-a,-b-1}(L) \subseteq Kh^{-a,-b-2}(K_k)$. Using this together with the facts that $b>0$ and $L$ is amphichiral, we obtain the base case as follows
\begin{align*}
   0 \neq Kh^{a,b}(K) \cong Kh^{a,b+1}(L) \cong Kh^{-a,-b-1}(L) \subseteq Kh^{-a,-b-2}(K_k).
\end{align*}
Because $-b-1$ is the minimal quantum grading of $Kh(L)$, when $m>0$ the long exact sequence corresponding to the skein triple ${(K_{k+m},K_{k+m-1},L)}$ becomes
\begin{align*}
    & 0 \longrightarrow Kh^{-a-m,-b-2(m+1)}(K_{k+m}) \longrightarrow Kh^{-a-m+1,-b-2m}(K_{k+m-1})\longrightarrow  0,
\end{align*}
implying the isomorphism $Kh^{-a-m,-b-2(m+1)}(K_{k+m}) \cong Kh^{-a-m+1,-b-2m}(K_{k+m-1})$. Thus the claim follows from this isomorphism and the inductive hypothesis. 

\end{proof}

Now the lemma follows from Claim \ref{C1} and Claim \ref{C2} by setting $i=a+m$ and $j=b+2(m+1)$.
\end{proof}

\begin{remark}
The proof of Lemma \ref{lemma:khovanov} extends to any partial knot $J$ and shows that $Kh^{i,j}(K_k) =0$ for $j \geq b + 2k$ and $Kh^{-a-k,-b-2k}(K_k) \neq 0$ where $(a,b)$ is the maximal bigrading of $Kh(J\#-J)$ provided that $q_{max}(L) < b-1$. For instance, if $L$ is the 2-component unlink as in the cases of the 3-stranded pretzel knots $P(3,k,-3)$ and the Kanenobu knots, Theorem \ref{ThmB} is achieved. However, this condition is not always satisfied: The knot $12n605$ admits a symmetric union presentation of the form $(D_{3_1} \cup -D_{3_1})(1)$ where $q_{max}(L)=6$ while $q_{max}(3_1 \# -3_1)=7$; see the Appendix in \cite{La17}.
\end{remark}

\section{The case where the partial knot is not the unknot}{\label{S4}}
In this section, we will prove Theorem \ref{ThmB} when $J$ is a nontrivial knot. Then, together with Theorem \ref{ThmA} we will complete the proof of Theorem \ref{ThmB}.

\begin{theorem}{\label{ThmC}}
Let $K$ admit a symmetric union presentation of the form $(D_J \cup -D_J)(n)$ for a nontrivial knot $J$. Then, $K$ is amphichiral if and only if $K = J \# -J$. 
\end{theorem}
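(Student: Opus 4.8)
The strategy is to mimic the topological proof of Theorem~\ref{ThmA}, but now working in the $\mathbb{Q}$-homology sphere $\Sigma(J\#-J) = \Sigma(J)\#\Sigma(-J)$ rather than in $S^3$. As in Section~\ref{S3.1}, amphichirality of $K$ gives an orientation-preserving homeomorphism $\Sigma(K)\cong\Sigma(-K)$, and via the Montesinos trick this becomes a statement about two Dehn fillings of $\tilde X = \Sigma(J\#-J)_{\overline K}$: we get $\tilde X(\tfrac1n)\cong\tilde X(-\tfrac1n)$ (orientation-preservingly, after tracking how $\tilde\tau$ acts — here one must check that $\tilde\tau$ carries the $\frac1n$-filling slope to the $-\frac1n$-filling slope, using that $\tau$ reverses orientation and fixes the twist-region ball $B$). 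So the heart of the matter is: a composite knot $\overline K = K'\# -K'$ in the reducible manifold $\Sigma(J)\#\Sigma(-J)$ cannot admit such a pair of ``chirally cosmetic'' surgeries unless $\overline K$ is trivial, in which case $\Sigma(K) = \Sigma(J\#-J)$ and one concludes $K = J\#-J$ by the uniqueness of prime decompositions together with the fact (from Section~\ref{S2}) that $K$ is visibly built from $J\#-J$ by the trivial tangle when $\overline K$ is unknotted.

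**Key steps.** First I would reduce to the case where $\overline K$ lies in a ball, or more precisely analyze the prime/torus decomposition of the exterior $\tilde X$. Since $\Sigma(J)\#\Sigma(-J)$ is reducible and $\overline K = K'\#-K'$ respects the connect-sum sphere $\tilde{\mathcal S}$, the exterior $\tilde X$ contains an essential annulus or the swallow-follow torus coming from the decomposition $\overline K = K'\#-K'$; standard results (Gordon–Litherland, or the behavior of surgery on composite knots) say that all but at most one Dehn filling slope yields a manifold whose prime decomposition or JSJ decomposition still records the two summands $\Sigma(J)_{K'}(\cdot)$ and $\Sigma(-J)_{-K'}(\cdot)$. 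Then I would compare the JSJ decompositions (or prime decompositions) of $\tilde X(\tfrac1n)$ and $\tilde X(-\tfrac1n)$: an orientation-preserving homeomorphism between them must match up the corresponding pieces, which forces an orientation-preserving homeomorphism between, say, $\Sigma(J)_{K'}(\tfrac1{n})$ and $\Sigma(-J)_{-K'}(\tfrac1{n'})$ for the appropriate reparametrized slopes — but these are mirror-image constructions, so one gets a purely cosmetic surgery situation on $K'$ in $\Sigma(J)$, or an orientation-reversing self-homeomorphism, and a dimension/linking-form or Casson–Walker invariant count rules it out unless $K'$ is trivial. When $K'$ is trivial, $\overline K$ is an unknot in a ball disjoint from $\tilde{\mathcal S}$ (being the lift of the twist region), its $\frac1n$-surgery does nothing, so $\Sigma(K)\cong\Sigma(J\#-J)$; lifting this identification back downstairs and using the set-up in Section~\ref{S2} identifies $K$ with $J\#-J$.

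**Main obstacle.** The genuinely hard part is the comparison of decompositions when the fillings are \emph{not} in ``generic position'' — i.e.\ ruling out the exceptional slope, and ruling out the possibility that the two JSJ pieces get swapped by the homeomorphism in a way that does \emph{not} immediately produce a contradiction with orientations. One has to use that an orientation-preserving homeomorphism $\Sigma(K)\to\Sigma(-K)$ combined with the reflection $\tilde\tau$ gives enough symmetry to pin down the matching, and then invoke either Tao's cosmetic surgery results for the summands or a direct Casson–Walker / $d$-invariant obstruction. A secondary subtlety is bookkeeping the slope reparametrization: under connect-sum the meridian–longitude framing of $\overline K$ restricts to framings of $K'$ and $-K'$, and the $-J$ summand introduces a sign, so the would-be cosmetic pair on $K'$ is $(\tfrac1n, \tfrac{1}{-n})$ or similar — exactly the chirally cosmetic pair that forces $K'$ trivial. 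I expect the cleanest route is to phrase everything in terms of the JSJ graph of $\tilde X(\pm\tfrac1n)$ and argue that the only way for the two graphs to be isomorphic (respecting orientations) is the degenerate one where the knotted pieces disappear.
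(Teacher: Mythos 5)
Your overall strategy (pass to $\Sigma(K)=\tilde X(\tfrac1n)$, use the swallow--follow/composing-space structure coming from $\overline K=K'\#-K'$, and compare JSJ decompositions of $\tilde X(\tfrac1n)$ and $\tilde X(-\tfrac1n)$) is the same as the paper's, but the step where you extract the contradiction is wrong, and it is precisely the step where all the work lies. You claim that matching the JSJ pieces ``forces an orientation-preserving homeomorphism between $\Sigma(J)_{K'}(\tfrac1n)$ and $\Sigma(-J)_{-K'}(\tfrac1{n'})$,'' i.e.\ a cosmetic-surgery situation on $K'$. This is not what happens: a $\tfrac1n$-surgery on the composite knot $\overline K$ does not induce surgeries on the summands. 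The exteriors $Y=\Sigma(J)\setminus\nu(K')$ and $-Y$ sit \emph{unchanged} inside both $\tilde X(\tfrac1n)$ and $\tilde X(-\tfrac1n)$; only the central composing space $C=S(0,3)$ is filled, becoming a cable space $C_{\pm n}$. Since $-Y$ is orientation-reversingly homeomorphic to $Y$, the outer JSJ pieces of the two fillings match up perfectly, so no obstruction of the kind you describe (Tao's theorem, Casson--Walker, $d$-invariants on the summands) is available. The actual obstruction in the paper is that $e(C_{n})=\tfrac1n$ while $e(C_{-n})=-\tfrac1n$; a fiber-preserving homeomorphism between Seifert pieces with unequal Euler numbers must act on the boundary tori by a nonzero total number of Dehn twists along the fiber (Proposition \ref{PropHatcher}), and these twists cannot be absorbed by $Y$ and $-Y$ because the fiber slope is meridional whereas any self-homeomorphism of a $\mathbb{Q}$-homology solid torus preserves the rational longitude, which is not meridional (Lemma \ref{FneqRL}). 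The cases $|n|=1$ (where $C_{\pm1}\cong T^2\times I$ is not a JSJ piece) and $Y$ a solid torus (where everything is Seifert fibered and one uses the lens-space classification) require separate, nontrivial arguments that your sketch does not address.

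A secondary gap: your endgame, ``$K'$ trivial $\Rightarrow\Sigma(K)\cong\Sigma(J\#-J)\Rightarrow K=J\#-J$,'' needs the double branched cover to determine the knot, which is false in general. The paper avoids this by proving the contrapositive: assuming $K\neq J\#-J$ (and reducing to $K$ prime, so that $\tilde X$ is irreducible and $Y$, $-Y$ are genuinely knotted or the Seifert case applies), it shows $K$ cannot be amphichiral. You should restructure your argument the same way rather than trying to recover $K$ from $\Sigma(K)$.
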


The backward direction is obvious. We will prove the contrapositive of the forward direction. Thus, we assume $K$ is of the form $(D_J \cup -D_J)(n)$ but is not $J\#-J$, the form stated in Theorem \ref{ThmC}. Let us begin by setting some foundation. By \cite{La00} we know that if $J$ is nontrivial, then $K$ is nontrivial. To show that, Lamm and Eisermann establish a surjection $\overline{\pi}(K) \rightarrow \overline{\pi}(J)$, where $\overline{\pi}(K)=\pi(K)/(m^2)$ is the $\pi-$orbifold group of $K$ which is obtained from the knot group with the additional relation that a meridian has order 2. It holds in general that for a knot $K$ there is a canonical surjection $\overline{\pi}(K) \rightarrow \mathbb{Z}_2$ resulting from the abelianization. Hence, the result is achieved by these two surjections and the fact that $\overline{\pi}(K)= \mathbb{Z}_2$ if and only if $K$ is the unknot, which follows by the proof of the Smith conjecture; see \cite[Proposition~3.2]{BZ89}.

If $K$ is composite, then $K = K_1 \# -K_1 \# K_2$ where $K_1$ is a nontrivial knot and $K_2$ is a symmetric union with one twist region (\cite{Ta19},\cite{Ko20}). Repeating this, we may assume that $K_2$ is prime and the partial knot $J$ contains $K_1$ as a connected summand. Then, because $K$ is not $J\#-J$, $K$ is either prime or has a prime summand which is a symmetric union with one twist region. In the latter case, if $K$ is amphichiral, then by the Unique Factorization Theorem of Schubert \cite{Sch49} this prime symmetric union is also amphichiral. Thus, it suffices to prove the statement assuming $K$ is prime. As shown in \cite{Ko20} if $K$ is prime, $\tilde{X}$ is irreducible, i.e. contains no essential sphere. Hence, for the remainder of the paper, $K$ is nontrivial and prime, and hence, $\Sigma(K)$ is not homeomorphic to $S^3$ and $\tilde{X}$ is irreducible. 

The outline of the proof of Theorem \ref{ThmC} is then as follows: If $K$ is amphichiral, i.e. $K\simeq -K$, then $\Sigma(K) \cong \Sigma(-K)$. By the Dehn filling description of $\Sigma(K)$ given in Section \ref{S2} and the fact that as $K$ is of the form $(D_J \cup -D_J)(n)$, then $-K$ is of the form $(D_J \cup -D_J)(-n)$, we obtain $\tilde{X}(\frac{1}{n}) \cong \tilde{X}(-\frac{1}{n})$. Analyzing the JSJ-decomposition of the fillings we show that they can never be homeomorphic.

\subsection{The JSJ-decomposition of double branched covers}{\label{S4.1}} In this subsection, we first review JSJ-decompositions and then give details about the JSJ-decompositions of $\tilde{X}$ and $ \tilde{X}(\frac{1}{n})$. 

We use the notation $S(g,n:(\alpha_1,\beta_1),\dots , (\alpha_k,\beta_k))$ to denote the Seifert fibered space with $n$ boundary components, $k$ fibers with coefficients $(\alpha_i,\beta_i)$ for $i=1,\dots , k$, and a base orbifold of genus $g$. A negative $g$ means that the base orbifold is non-orientable and $|g|$ is the first Betti number of this non-orientable surface. We may assume $\alpha_i >0$ as the pair $(\alpha_i, \beta_i)$ is only determined up to sign and the fiber with coefficient $(\alpha_i, \beta_i)$ is said to be \textit{singular} if $\alpha_i >1$. We would like to point out the Seifert fibered spaces $S(0,n)$ with $n \geq 3$ and $S(0,2;(\alpha,\beta))$, which are called $\textit{a composing space}$ and $\textit{a cable space}$ respectively, for being essential in our argument.

The following is the celebrated JSJ-decomposition theorem, which is independently due to \cite{JS78} and \cite{Jo79}.

\begin{theorem}[The JSJ-decomposition theorem] 
Let $M$ be a compact irreducible orientable 3-manifold with a boundary possibly empty or disjoint union of tori. Then there exists a finite collection of disjoint incompressible embedded tori $\{T_i\}$ in $M$ such that each component of M $\setminus \cup_i T_i$ is either atoroidal or Seifert fibered. Furthermore, any such collection with a minimal number of components is unique up to isotopy.
\end{theorem}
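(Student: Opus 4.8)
The plan is to treat this as a consequence of Kneser--Haken finiteness together with the Torus Theorem, proving \emph{existence} of a valid torus system first and then its \emph{uniqueness} up to isotopy. First I would produce some valid collection. By Kneser--Haken finiteness there is an a priori bound on the number of disjoint, pairwise non-parallel, incompressible tori in $M$, so I may choose a maximal collection $\{T_i\}$ of such tori, none boundary-parallel. Cutting $M$ along $\cup_i T_i$ yields finitely many pieces, each irreducible (since $M$ is irreducible and each $T_i$ is incompressible) with incompressible boundary. To see that each piece $N$ is atoroidal or Seifert fibered, suppose $N$ contains an essential torus; by maximality of $\{T_i\}$ every embedded essential torus in $N$ is boundary-parallel, so $N$ contains no embedded non-boundary-parallel incompressible torus, and the Torus Theorem (Waldhausen, Feustel, Jaco--Shalen, Scott) then forces $N$ to be Seifert fibered. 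Passing to a collection with the fewest components among all collections enjoying the atoroidal-or-Seifert property (possible since the one just built is finite) gives a minimal valid collection.

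For uniqueness, given two minimal valid collections $\mathcal{T}$ and $\mathcal{T}'$, I would isotope them to meet transversely with $|\mathcal{T}\cap\mathcal{T}'|$ minimal. Incompressibility of all the tori and irreducibility of $M$ let me delete, by an innermost-disk and cut-and-paste argument, any intersection circle that is inessential on either torus, leaving only essential, mutually parallel intersection circles. The core step is to push each component of $\mathcal{T}'$ off $\mathcal{T}$ entirely: a piece of $\mathcal{T}'$ lying in a JSJ piece $N$ of $M\setminus\mathcal{T}$ is an essential annulus, or an essential torus if it is already disjoint from $\mathcal{T}$. When $N$ is atoroidal such a torus is boundary-parallel, hence isotopic to a component of $\mathcal{T}$; when $N$ is Seifert fibered I would isotope the essential annuli and tori to be vertical (treating the few horizontal exceptions directly) to remove the crossings. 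Once $\mathcal{T}'$ is disjoint from $\mathcal{T}$, minimality of both collections forces each torus of one system to be isotopic to a torus of the other.

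The main obstacle is the Seifert-fibered case of uniqueness, for two intertwined reasons. First, isotoping essential annuli and tori in a Seifert piece to vertical position relies on Waldhausen's theorem that a sufficiently large Seifert fibered space has an essentially unique Seifert fibration; it is exactly this uniqueness that prevents two adjacent Seifert pieces from regluing into one, and so certifies that a JSJ torus separating non-matching fibrations cannot be discarded. Second, the small Seifert fibered spaces with non-unique fibrations --- $T^2\times I$, $S^1\times D^2$, the twisted $I$-bundles over the torus and Klein bottle, and a short list of small Seifert manifolds --- are genuine exceptions that must be enumerated and handled individually, since in these pieces a careless argument could spuriously insert or delete a parallel torus. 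Verifying that the cut-and-paste reductions terminate and that these exceptional fibrations are correctly accounted for is where the real work concentrates; the Torus Theorem input used for existence is, by comparison, clean.
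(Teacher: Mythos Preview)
The paper does not prove this theorem at all: it is stated as the classical JSJ-decomposition theorem and attributed to Jaco--Shalen \cite{JS78} and Johannson \cite{Jo79}, with no argument given. So there is nothing in the paper to compare your proposal against; the theorem is used as a black box.

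Your sketch follows the standard textbook route (essentially Hatcher's or Bonahon's exposition), and the overall architecture is sound. One small point worth tightening: in your existence step the Torus Theorem is not needed where you place it. Once you have taken a \emph{maximal} collection of disjoint, pairwise non-parallel, non-boundary-parallel incompressible tori, any embedded essential torus in a complementary piece $N$ is automatically boundary-parallel by maximality, so $N$ is already atoroidal in the embedded sense and hence trivially ``atoroidal or Seifert fibered.'' The dichotomy only becomes nonvacuous after you pass to a \emph{minimal} such collection, where adjacent Seifert pieces with matching fibrations have been merged and genuinely toroidal Seifert pieces (composing spaces, cable spaces) can appear. The Torus Theorem is more naturally invoked either in the characteristic-submanifold formulation or when one wants algebraic atoroidality of the non-Seifert pieces, not in the bare existence of a splitting.
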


\begin{definition}
We will refer to such a minimal collection of tori $\{T_i\}$ as the $\textit{JSJ-tori}$ of $M$. Then, an embedded torus $T$ will be called a $\textit{JSJ-torus}$ if it is isotopic to a torus in the collection of the JSJ-tori. The components of $M$ cut along the JSJ-tori will be referred to as the $\textit{JSJ-pieces}$ of $M$.
\end{definition}

To decide whether a given collection of tori is JSJ or not, we will appeal to the following proposition:

\begin{proposition}\cite{AFW15}{\label{PropAFW}}
Let M be a compact irreducible orientable 3-manifold with empty or toroidal boundary. Let $\{T_i\}$ be a collection of disjoint embedded incompressible tori in M. Then $\{T_i\}$ are the JSJ-tori of M if and only if the following holds:

\begin{enumerate}[i.]
     \item each component $\{M_j\}$ of M $\setminus \cup_i T_i$ is atoroidal or Seifert fibered,
    \item if $T_i$ cobounds Seifert fibered components $M_j$ and $M_k$ (possibly with $j=k$), then their regular fibers do not match,
    \item if a component $M_j$ is homeomorphic to $T^2 \times I$, M is a torus bundle with only one JSJ-piece.
\end{enumerate}
\end{proposition}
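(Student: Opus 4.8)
The plan is to derive the proposition from the existence-and-uniqueness statement of the JSJ-decomposition theorem, treating conditions (i)--(iii) as a local recognition criterion for \emph{minimality} of a decomposing collection. Fix, once and for all, the canonical JSJ-tori $\{S_l\}$ guaranteed by the theorem; since they form the unique minimal collection cutting $M$ into atoroidal or Seifert fibered pieces, it suffices in both directions to relate an arbitrary incompressible collection $\{T_i\}$ satisfying (i) to $\{S_l\}$ through minimality.

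For the forward direction I would assume $\{T_i\}$ is the JSJ collection and verify each condition by showing that its failure contradicts minimality. Condition (i) is the defining property. For (ii): if $T_i$ separated Seifert fibered pieces $M_j, M_k$ whose regular fibers were isotopic on $T_i$, the two Seifert fibrations would glue to a fibration of $M_j \cup_{T_i} M_k$; this amalgamated piece is again Seifert fibered, so $T_i$ could be deleted, lowering the count---impossible. For (iii): a piece $M_j \cong T^2\times I$ has two boundary tori that are parallel in $M$. If both lay in $\{T_i\}$ they could be isotoped together and one discarded; if one lay on $\partial M$ the adjacent torus would be boundary-parallel, hence inessential and not a JSJ-torus. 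Either way minimality is violated unless the collection consists of a single nonseparating torus whose complementary piece is this $T^2\times I$---precisely the case in which $M$ fibers over $S^1$ with torus fiber and has a single JSJ-piece. This gives (iii).

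The backward direction is the substantive half and where the main obstacle lies. Assuming (i)--(iii), I would argue $\{T_i\}$ is minimal; uniqueness then forces $\{T_i\}$ to be isotopic to $\{S_l\}$. If $\{T_i\}$ were not minimal, some torus $T_{i_0}$ would be removable, meaning the piece $P$ obtained by regluing its two sides is still atoroidal or Seifert fibered. If $P$ is atoroidal, then the incompressible torus $T_{i_0} \subset P$ must be boundary-parallel in $P$, so an adjacent piece is a $T^2\times I$; condition (iii) then forces $M$ to be a single-piece torus bundle, in which $T_{i_0}$ was already the unique JSJ-torus and nothing was removed, a contradiction. If $P$ is Seifert fibered, I would invoke the classification of incompressible tori in Seifert fibered spaces: $T_{i_0}$ is isotopic either to a vertical torus---whence the induced fibrations on $M_j$ and $M_k$ have matching fibers on $T_{i_0}$, contradicting (ii)---or to a horizontal one, which forces $P$ to be a torus bundle and again routes into the $T^2\times I$ exception governed by (iii).

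The crux---and the step I expect to demand the most care---is the Seifert fibered analysis in the backward direction: controlling when an incompressible torus in a Seifert fibered space is vertical versus horizontal (Waldhausen's classification of incompressible surfaces), and managing the small Seifert fibered spaces and the solid torus and $T^2\times I$ pieces whose Seifert fibrations are not unique, since it is exactly this nonuniqueness that makes the fiber-matching notion in (ii) ill-defined for $T^2\times I$ and necessitates the separate clause (iii). I would also need the complementary fact that gluing two atoroidal, non-Seifert-fibered pieces along an incompressible torus produces a toroidal manifold, which guarantees that a removable torus can only sit between Seifert fibered pieces or arise from a product region---exactly the two situations excluded by (ii) and (iii).
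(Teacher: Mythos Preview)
The paper does not prove this proposition at all: it is quoted verbatim as a result from \cite{AFW15} and used as a black box when analyzing the JSJ-decompositions of $\tilde{X}$ and $\tilde{X}(\frac{1}{n})$. So there is no ``paper's own proof'' to compare against.

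That said, your sketch is the standard argument and is essentially correct. One point deserves more care than you give it. In the backward direction, when $P = M_j \cup_{T_{i_0}} M_k$ is Seifert fibered and $T_{i_0}$ is vertical in $P$, you conclude that the induced fibrations on $M_j$ and $M_k$ match on $T_{i_0}$, contradicting (ii). But (ii) is phrased for \emph{the} Seifert fibrations of $M_j$ and $M_k$, and you must check that the pieces $M_j, M_k$ were already known to be Seifert fibered (not merely atoroidal) and that the fibration on $P$ restricts to \emph{a} Seifert fibration on each with matching fibers on $T_{i_0}$; since (ii) should be read as ``for any Seifert fibrations of $M_j$ and $M_k$'', this is fine, but the quantifier matters and is precisely what makes $T^2\times I$ (with its non-unique fibration) the exceptional case handled by (iii). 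Your final paragraph already flags this, so you are aware of the issue; just make sure the write-up treats the small Seifert fibered pieces (solid torus, $T^2\times I$, twisted $I$-bundle over the Klein bottle) explicitly rather than leaving them to a parenthetical.
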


\begin{definition}
From the JSJ-decomposition of a manifold $M$, there arises a graph in a natural way such that the vertices correspond to the JSJ-pieces and the edges correspond to the JSJ-tori along which adjacent JSJ-pieces are glued. We refer to this graph as the $\textit{JSJ-graph}$ of $M$.
\end{definition}

\begin{remark}
Because an embedded non-separating torus gives rise to an element of infinite order in homology, the JSJ-graph of a $\mathbb{Q}HS^3$ is acyclic. As we deal with double branched covers of knots, which are $\mathbb{Q}HS^3$, we simply refer to their JSJ-graphs as \textit{JSJ-trees}.
\end{remark}

\begin{figure}[ht]
    \centering
    \includegraphics[height=4cm]{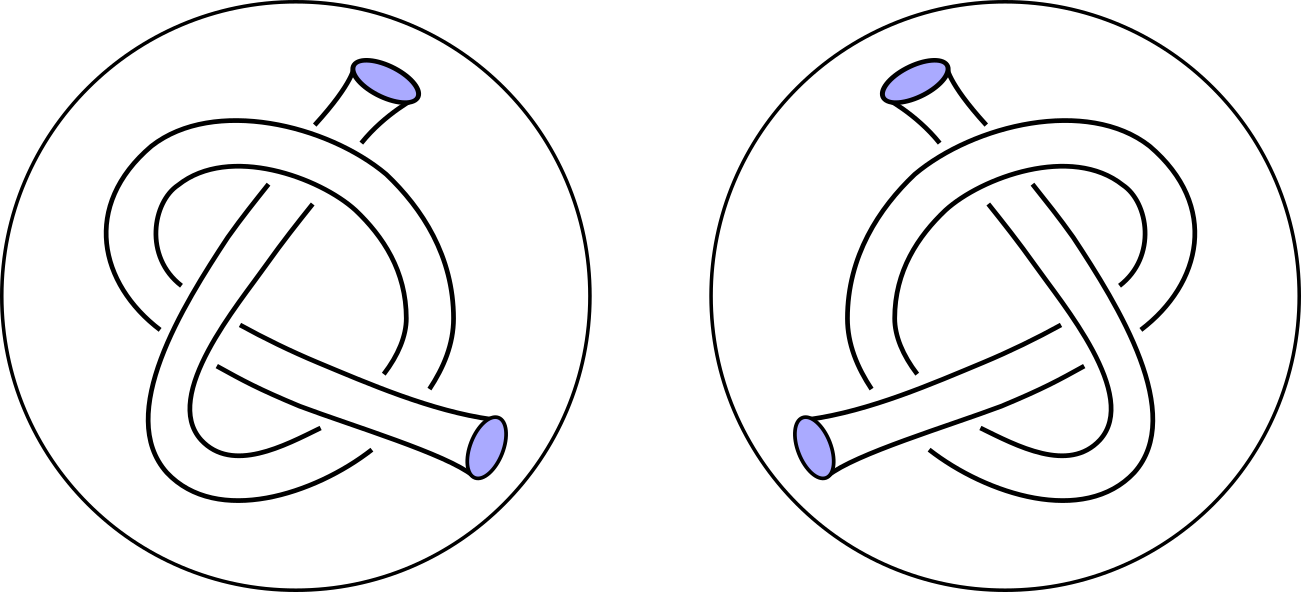}
    \caption{Schematic pictures of $Y$ and $-Y$. $\tilde{X}$ is obtained by gluing them along the annuli that are the complements of two purple disks.}
    \label{figure:complementofcompositeknots}
    \end{figure}
    
We now describe a composing space in $\tilde{X}$ arising as a result of Lemma \ref{L4}. Let $Y$ denote the knot complement $\Sigma(J)\setminus \nu (K^\prime)$ (similarly $-Y$ denotes $\Sigma(-J)\setminus \nu (-K^\prime)$ ). Consider a tubular neighborhood $\nu(\partial Y)$ in $ Y$. This neighborhood has two disjoint tori as boundary, one of which is $\partial Y$ and the other is interior in $Y$. Let $T_1$ denote this interior torus boundary and note that it bounds a 3-manifold homeomorphic to $Y$. Hence, let us abuse the notation and call this 3-manifold $Y$. We repeat the same construction for $-Y$ and denote the interior torus by $T_2$. A choice of orientations on $K^\prime$ and $-K^\prime$ determines meridians in $\partial Y$ and $\partial(-Y)$, respectively. Similarly, for $T_1$ and $T_2$ as well. We make a choice such that when we glue $\nu(\partial Y)$ and $\nu(\partial (-Y))$ along annuli, that are tubular neighborhoods of meridians in $\partial Y$ and $\partial (-Y)$, the orientations are compatible. Let $C$ denote this space obtained by gluing $\nu(\partial Y)$ and $\nu(\partial (-Y))$ and then we get $\tilde{X} = Y \cup_{T_1} C \cup_{T_2} -Y$. Let $C_n$ denote the space obtained by $ \frac{1}{n}$-Dehn filling of $C$. Then, from the construction we see that

\begin{proposition}{\label{PropFiber}}
$C$ is a composing space $S(0,3)$ whose regular fibers have meridional slopes on $\partial \tilde{X}$, $T_1$, and $T_2$.
Then $C_n$ is a cable space $S(0,2;(n,1))$ $(S(0,2;(n,-1))$ if $n<0)$ such that its regular fibers are also meridional on $T_1$ and $T_2$.
\end{proposition}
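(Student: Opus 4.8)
The plan is to identify the pieces $\nu(\partial Y)$ and $\nu(\partial(-Y))$ concretely before gluing, and then track what the gluing and the Dehn filling do to the Seifert structure. Each $\nu(\partial Y)$ is a tubular neighborhood of a torus, hence homeomorphic to $T^2 \times I$; its two boundary components are $\partial Y$ and the interior torus $T_1$. As a $T^2\times I$ it is Seifert fibered in many ways, but the relevant one is to fiber it by the family of parallel copies of the meridian of $K'$ sitting on each $T^2\times\{t\}$ — concretely, $\nu(\partial Y)$ fibers over the annulus $S(0,2)$ with regular fiber the meridian slope on both $\partial Y$ and $T_1$. Likewise $\nu(\partial(-Y))$ fibers over $S(0,2)$ with meridional regular fiber on $\partial(-Y)$ and $T_2$. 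By our choice of orientations the gluing annuli in $\partial Y$ and $\partial(-Y)$ are tubular neighborhoods of these meridian curves, i.e. they are unions of regular fibers, so the two Seifert fibrations match along the gluing annuli and assemble into a Seifert fibration of $C$. Computing the base orbifold: gluing two annuli along a subannulus of each boundary circle produces a pair of pants (a thrice-punctured sphere) with no exceptional points, so $C \cong S(0,3)$, and by construction its regular fiber is the meridian slope on each of the three boundary tori $\partial\tilde X$, $T_1$, $T_2$. This proves the first sentence.

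For the second sentence, recall that $C_n$ is obtained from $C$ by $\frac1n$-Dehn filling along $\partial\tilde X$, where the filling slope is measured in the canonical meridian–longitude pair $(\mu,\lambda)$ coming from the pillowcase picture of $B$ described in Section \ref{S2}. Here $\mu$ is precisely the meridian of $\overline K$, which by Proposition's first part is the regular fiber of $C$ on $\partial\tilde X$. Filling a Seifert fibered space along a boundary torus: if the filling slope equals the fiber slope, one gets a connected-sum-type degeneration, but if the filling slope is distinct from the fiber, the solid torus inherits a Seifert fibration compatible with the rest and one obtains a Seifert fibered space with one fewer boundary component and (generically) one new exceptional fiber, whose multiplicity is the geometric intersection number of the filling slope with the fiber. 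The $\frac1n$-slope $\mu + n\lambda$ (in the basis where $\mu$ is the fiber) has intersection number $|n|$ with the fiber $\mu$, so the filled-in solid torus contributes an exceptional fiber of order $n$; thus $C_n \cong S(0,2;(n,1))$ (with $(n,-1)$ when $n<0$, because reversing the sign of $n$ reverses the sign of the Seifert invariant). The two remaining boundary tori $T_1$ and $T_2$ are untouched, and the regular fiber there is still the meridian slope, as claimed.

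I would present the fiber-matching computation (the first paragraph above) carefully, since that is where one must be precise about \emph{which} Seifert structure on $T^2\times I$ is being used and verify that the chosen orientations really do make the two fibrations agree along the gluing annuli — this is the step most prone to a sign or framing slip. The Dehn-filling computation in the second paragraph is then routine once one knows that the $\frac1n$-filling slope is $\mu + n\lambda$ with $\mu$ the regular fiber: it is the standard fact that Dehn filling a Seifert fibered space along a non-fiber slope of fiber-intersection number $\alpha$ produces an exceptional fiber of multiplicity $\alpha$. The only subtlety worth a sentence is the bookkeeping of the sign of the Seifert invariant $(n,\pm1)$ under $n\mapsto -n$, which matters because in Section \ref{S4} we will be comparing $\tilde X(\tfrac1n)$ with $\tilde X(-\tfrac1n)$ and the distinction between $S(0,2;(n,1))$ and $S(0,2;(n,-1))$ is exactly the kind of chirality data that drives the contradiction there.
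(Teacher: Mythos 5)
Your argument is correct and follows essentially the same route as the paper: fiber each $\nu(\partial Y)\cong T^2\times I$ as $S(0,2)$ with meridional regular fibers, note the gluing annuli are fibered so the fibrations assemble over the pair-of-pants base, and then observe that the $\frac1n$-filling slope meets the fiber $\mu$ in $|n|$ points, producing the exceptional fiber $(n,\pm1)$. Your second paragraph simply spells out in more detail what the paper dispatches with ``the second assertion also follows immediately as the fiber is meridional on $\partial\tilde X$.''
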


\begin{proof}
Because $\nu(\partial Y) \cong T^2 \times I$, we may choose to fiber $\nu(\partial Y)$ so that it is $S(0,2)$ and its regular fibers have meridional slopes on both $\partial Y$ and $T_1$. We fiber $\nu(\partial(-Y))$ in a similar way. Then, as $C$ is obtained by gluing $\nu(\partial Y)$ and $\nu(\partial (-Y))$ along annuli, that are tubular neighborhoods of meridians in $\partial Y$ and $\partial (-Y)$, the regular fibers of $\nu(\partial Y)$ and $\nu(\partial(-Y))$ agree under gluing and hence the fibrations extend over $C$. The desired result now follows immediately as the base orbifold is the boundary sum of two annuli, which is a pair of pants. The second assertion also follows immediately as the fiber is meridional on $\partial \tilde{X}$. 

\end{proof}

In light of Proposition \ref{PropFiber}, the JSJ-decomposition of $\tilde{X}(\frac{1}{n})$ is induced by the JSJ-decomposition of $\tilde{X}$. Appealing to Proposition \ref{PropAFW}, the JSJ-decomposition of $\tilde{X}$ can be determined from those of $Y$ and $-Y$ when $T_1$ or $T_2$ are incompressible in $\tilde{X}$. By the symmetry $\tilde{X}$ governs, standard innermost disk arguments, and the fact that the components of $\partial C$ are incompressible in $C$, $T_1$ or $T_2$'s being incompressible in $\tilde{X}$ is the same as $\partial Y$'s being incompressible in $Y$. As any essential sphere in $Y$ would give rise to an essential sphere in $\tilde{X}$, we conclude that $Y$ is also irreducible. Hence, if $\partial Y$ is compressible in $Y$, then $Y$ is a solid torus. In this case, $\tilde{X}$ and $\tilde{X}(\frac{1}{n})$ are Seifert fibered spaces and we analyze this case by studying homeomorphisms between Seifert fibered spaces. When $Y$ is not a solid torus, we then appeal to the following lemma to have a description of the JSJ-decompositions of $\tilde{X}$ and $\tilde{X}(\frac{1}{n})$ in terms of those of $Y$ and $-Y$. Before we state the lemma, note when $|n|=1$ $C_{\pm 1}, \cong T^2 \times I$. As $\tilde{X}(\pm 1)$ is not a torus bundle, $C_{\pm 1}$ is not a JSJ-piece and hence $|n|=1$ is considered as a separate case.

   \begin{figure}[ht]{\label{JSJtrees}}
    \centering
    \includegraphics[height=4.5cm]{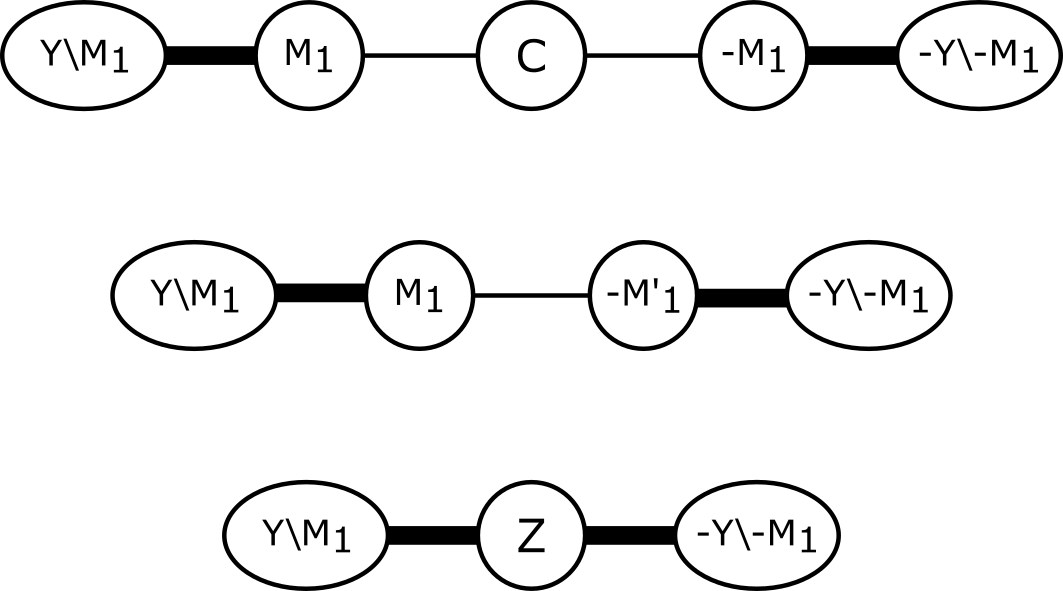}
    \caption{The pictures from top to bottom illustrate the JSJ-trees of $\tilde{X}$ in cases $i$, $i^\prime$, and $ii$, respectively. Replacing $C$ and $Z$ by $C_n$ and $Z_n$ gives the JSJ-trees of $\tilde{X}(\frac{1}{n})$, and $-M_1^\prime = C_{\pm 1} \cup -M_1$. Each circular node represents a JSJ-piece, and each elliptic node represents a union of JSJ-pieces. The thick lines represents multiple edges. 
    }
    \label{figure:JSJtrees}
    \end{figure}

\begin{lemma}{\label{JSJpiecesofX}}
Suppose $Y$ is not a solid torus. Let $\{M_j\}$ denote the JSJ-pieces of $Y$ such that $M_1$ is the one containing $T_1$ and $\{-M_j\}$ denotes those of $-Y$. Then one of the following holds:
\begin{enumerate}[i.]
    \item $M_1$ is not a Seifert fibered space whose regular fibers are meridional on $T_1$, then the JSJ-pieces of $\tilde{X}$ are $ \{M_j\} \cup \{C\} \cup \{ -M_j \}$,
    \item $M_1$ is a Seifert fibered space whose regular fibers are meridional on $T_1$ and $M_1 \neq Y$, then the JSJ-pieces of $\tilde{X}$ are $ \{M_j\} \cup \{ Z \} \cup \{ -M_j \}$ with $j \neq 1$ and where $Z$ is the Seifert fibered space $M_1 \cup C \cup -M_1$,
    \item $M_1$ is a Seifert fibered space whose regular fibers are meridional on $T_1$ and $M_1 = Y$, then $\tilde{X}$ is a Seifert fibered space.
\end{enumerate}
For $|n| \geq 2$, the statement also holds for $\tilde{X}(\frac{1}{n})$ when $C$ and $Z$ are replaced by $C_n$ and $Z_n$, respectively. When $|n|=1$, we need to replace Case $i$ with the following.
\begin{enumerate}[i.]
 \item[$i^\prime$.] $M_1$ is not a Seifert fibered space whose regular fibers are meridional on $T_1$, then the JSJ-pieces of $\tilde{X}(\pm 1)$ are $\{M_j\} \cup \{M_1\} \cup \{C_{\pm 1} \cup -M_1\} \cup \{ -M_j \}$ with $j \neq 1$.
 \end{enumerate}
\end{lemma}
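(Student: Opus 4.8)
The plan is to construct the JSJ-tori of $\tilde{X}$ and of $\tilde{X}(\tfrac{1}{n})$ from an obvious candidate collection and then test it against Proposition \ref{PropAFW}, modifying the collection precisely when its hypotheses fail. Write $\mathcal{T}_Y$ and $\mathcal{T}_{-Y}$ for the JSJ-tori of $Y$ and $-Y$. Since $Y$ is not a solid torus it is irreducible with incompressible boundary; hence $T_1$ and $T_2$ are incompressible in $\tilde{X}$ and $Y,-Y$ are $\pi_1$-injective there, so every torus of $\mathcal{T}_Y\cup\mathcal{T}_{-Y}\cup\{T_1,T_2\}$ is incompressible in $\tilde{X}$, which is itself irreducible; thus Proposition \ref{PropAFW} is applicable. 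By Proposition \ref{PropFiber}, $C$ is the Seifert fibered space $S(0,3)$ with regular fiber meridional on each of $\partial\tilde{X}$, $T_1$, and $T_2$. Finally, the orientation-reversing involution $\tilde{\tau}$ interchanges $Y\leftrightarrow -Y$, $T_1\leftrightarrow T_2$, and $M_1\leftrightarrow -M_1$, so it is enough to understand the local picture at $T_1$.

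I would first try the candidate collection $\mathcal{T}_Y\cup\mathcal{T}_{-Y}\cup\{T_1,T_2\}$, whose complementary pieces are $\{M_j\}$, $C$, and $\{-M_j\}$. Conditions (i) and (iii) of Proposition \ref{PropAFW} hold automatically (no piece is $T^2\times I$), and condition (ii) is inherited from the JSJ-decompositions of $Y$ and $-Y$ at every torus of $\mathcal{T}_Y\cup\mathcal{T}_{-Y}$; thus only condition (ii) at $T_1$ is in question, and there it holds unless $M_1$ is Seifert fibered with regular fiber matching that of $C$, i.e.\ meridional, on $T_1$. If $M_1$ is not such a space then, since by $\tilde{\tau}$-symmetry the same is true of $-M_1$ at $T_2$, the candidate collection satisfies Proposition \ref{PropAFW}, yielding case $i$ with JSJ-pieces $\{M_j\}\cup\{C\}\cup\{-M_j\}$. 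If $M_1$ is Seifert fibered with meridional fiber on $T_1$, then condition (ii) fails at $T_1$ and $T_2$, so these tori must be dropped; the matching meridional boundary fibers let the Seifert fibrations of $M_1$, $C$, and $-M_1$ be assembled into a single Seifert fibration of $Z:=M_1\cup_{T_1}C\cup_{T_2}-M_1$. If moreover $M_1\neq Y$, I would verify that the reduced collection $\mathcal{T}_Y\cup\mathcal{T}_{-Y}$ satisfies Proposition \ref{PropAFW}: the only new incidences are the JSJ-tori of $Y$ and $-Y$ adjacent to $M_1$ or $-M_1$, which now cobound $Z$ and some $M_j$ (or $-M_j$), and by the essential uniqueness of Seifert fibrations the restriction of $Z$'s fibration to $M_1$ is the original one, so condition (ii) there persists; this gives case $ii$ with pieces $\{M_j\}_{j\neq1}\cup\{Z\}\cup\{-M_j\}_{j\neq1}$. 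If instead $M_1=Y$, the same assembly exhibits $\tilde{X}=Y\cup C\cup -Y$ itself as Seifert fibered, which is case $iii$.

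For $\tilde{X}(\tfrac{1}{n})$ I would run the same argument with $C$ replaced by $C_n$. When $|n|\geq2$, Proposition \ref{PropFiber} gives that $C_n$ is a cable space $S(0,2;(n,1))$ (or $S(0,2;(n,-1))$ for $n<0$) --- irreducible, atoroidal, Seifert fibered, not homeomorphic to $T^2\times I$, with regular fiber still meridional on $T_1$ and $T_2$ --- so the trichotomy goes through verbatim with $C\rightsquigarrow C_n$ and $Z\rightsquigarrow Z_n$. When $|n|=1$, however, $C_{\pm1}\cong T^2\times I$, and since $\tilde{X}(\pm1)=\Sigma(K)$ is a $\mathbb{Q}HS^3$ and hence not a torus bundle, Proposition \ref{PropAFW}(iii) forbids $C_{\pm1}$ from being its own JSJ-piece: in cases $ii$ and $iii$ it is simply absorbed into $Z_{\pm1}$ (respectively into $\tilde{X}(\pm1)$) with no change, while in case $i$ it must be merged with a neighbour, producing the piece $C_{\pm1}\cup -M_1\cong -M_1$ with $T_1$ still separating it from $M_1$; this is case $i'$.

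The step I expect to be the main obstacle is condition (ii) of Proposition \ref{PropAFW} at the tori that are supposed to survive. In case $ii$ one must be sure that enlarging the Seifert piece $M_1$ to $Z$ (or $Z_n$) does not accidentally make its fiber agree with that of an adjacent JSJ-piece of $Y$; and in the $|n|=1$ refinement of case $i$ one must check that the regular fibers of $M_1$ and $-M_1$ do not match through the collar $C_{\pm1}$, so that $T_1$ genuinely remains a JSJ-torus. Both points rest on the essential uniqueness of Seifert fibrations --- excluding $Y\cong S^1\times D^2$ is exactly what rules out the exceptional pieces for which this could fail --- together with a careful account of how slopes are identified across $C$, $C_n$, and $C_{\pm1}$ using the $\tilde{\tau}$-symmetry.
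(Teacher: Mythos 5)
Your proposal follows exactly the route the paper intends: the paper records no separate proof of Lemma \ref{JSJpiecesofX}, leaving it to follow from Proposition \ref{PropFiber}, Proposition \ref{PropAFW}, and the preceding discussion (irreducibility of $\tilde{X}$ and $Y$, incompressibility of $T_1, T_2$ when $Y$ is not a solid torus, and the observation that $C_{\pm 1}\cong T^2\times I$ cannot itself be a JSJ-piece). Your candidate-collection argument, the trichotomy according to whether $M_1$ carries a fibration that is meridional on $T_1$, and the assembly of $Z$ are the intended reasoning; cases $i$, $ii$, $iii$ for $\tilde{X}$ and for $|n|\geq 2$ are complete as you present them, since at $T_1$ the fiber of $C$ (or $C_n$) is meridional, and since $Z$ (resp.\ $Z_n$) has at least two boundary components and hence a unique Seifert fibration restricting to the original one on $M_1$, so condition (ii) of Proposition \ref{PropAFW} persists at the surviving JSJ-tori of $Y$.

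The one step you flag but do not close is condition (ii) at the retained torus $T_1$ in case $i'$, and this is a genuine check rather than a formality. If $M_1$ is Seifert fibered with non-meridional fiber slope $\gamma$ on $T_1$, then $C_{\pm 1}\cup -M_1$ is also Seifert fibered, with fiber slope $\varphi_*^{-1}\tilde{\tau}_*(\gamma)$ on $T_1$, where $\varphi\colon T_1\to T_2$ is the identification through the collar $C_{\pm 1}$. In the basis $(s,f)$ the paper introduces later (with $\varphi_*(s)=s'+f'$, $\varphi_*(f)=-f'$ for $n=1$), the map $\varphi_*^{-1}\tilde{\tau}_*$ sends $ps+qf$ to $ps+(p-q)f$, so it preserves exactly two slopes: the meridian $f$ (excluded by the hypothesis of case $i'$) and the slope $2s+f$ (and $2s-f$ when $n=-1$). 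If $\gamma$ were that exceptional slope, the fibers of $M_1$ and $C_{\pm1}\cup -M_1$ would match across $T_1$, $T_1$ would not be a JSJ-torus, and the conclusion of $i'$ would fail; appealing to essential uniqueness of Seifert fibrations does not address this, since both pieces here do have unique fibrations. So you must either exclude that slope (for instance by an argument in the spirit of Lemma \ref{FneqRL}) or enlarge case $i'$ to allow $M_1\cup C_{\pm1}\cup -M_1$ to be a single Seifert piece. Since the paper supplies no proof, this gap is shared with the source, but your write-up should not leave it at ``a careful account of how slopes are identified.''
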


See Figure \ref{figure:JSJtrees} for the JSJ-trees in cases $i, i^\prime$ and $ii$.

\subsection{Homeomorphisms of JSJ-pieces}
As any homeomorphism is isotopic to one that leaves the union of JSJ-tori invariant, it restricts to homeomorphisms on JSJ-pieces and unions of JSJ-pieces. As seen previously, we have a composing space and a cable space as JSJ-pieces,  hence it is worth to us to know about homeomorphisms between Seifert fibered spaces. We start recalling some basic facts about Seifert fibered manifolds, which are quite elementary to the experts (as a matter of fact, almost all are Seifert's original results \cite{Se33}) and can be found in many textbooks. We choose to follow \cite{Ma16}. Two Seifert fiberings of a 3-manifold $M$ are said to be \textit{isomorphic} if there is a diffeomorphism from one to another that is fiber-preserving. Then we say the Seifert fibration of $M$ is \textit{unique up to isomorphism} if any two Seifert fibration of $M$ is isomorphic. Given two distinct parameterizations, whether they describe isomorphic fibrations or not is determined as follows.

\begin{proposition}{\label{PropMoves}}
$S(g,n: (\alpha_1,\beta_1), \dots, (\alpha_k,\beta_k))$ and $S(g,n:(\alpha^\prime_1,\beta^\prime_1, \dots , (\alpha^\prime_m, \beta^\prime_m))$ describe two orientation-preservingly isomorphic Seifert fibrations if and only if they are related by a finite sequence of the following moves and their inverses:
\begin{enumerate}[1.]
    \item $(\alpha_i,\beta_i),(\alpha_{i+1},\beta_{i+1}) \mapsto (\alpha_i, \beta_i + \alpha_i),(\alpha_{i+1},\beta_{i+1} - \alpha_{i+1})$,
    \item $(\alpha_1,\beta_1), \dots, (\alpha_k,\beta_k) \mapsto (\alpha_1,\beta_1), \dots, (\alpha_k,\beta_k),(1,0)$,
    \item $(\alpha_i,\beta_i) \mapsto (\alpha_i, \beta_i + \alpha_i)$ if $n \neq 0$,
\end{enumerate}
and permutations of the pairs $(\alpha_i,\beta_i)$'s.
\end{proposition}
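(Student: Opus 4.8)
The plan is to prove both implications from the standard \emph{cut-open model} of a Seifert fibration, following \cite{Ma16}. First I would fix, for a symbol $S(g,n:(\alpha_1,\beta_1),\dots,(\alpha_k,\beta_k))$, the usual presentation of the corresponding manifold $M$: delete from the base orbifold small disks around the $k$ cone points and collars of the $n$ boundary circles to obtain a compact surface $F_0$ (determined by $g$ in the evident way, orientable exactly when $g\ge 0$) with $k+n$ boundary circles; set $M_0 = F_0\times S^1$ with a chosen product trivialization; and glue back a fibered solid torus along each of the $k$ cone circles so that the meridian of the $i$-th solid torus maps to $\alpha_i\sigma_i+\beta_i\varphi$, where $\varphi$ is the fiber class and $\sigma_i$ the section class determined by the trivialization on that torus. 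An orientation-preservingly isomorphic fibration, i.e.\ an orientation-preserving fiber-preserving diffeomorphism respecting the orientations of base and fiber, will then be analyzed piece by piece.

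For the \emph{if} direction I would exhibit, for each listed move, an explicit fiber-preserving self-diffeomorphism of $M$ realizing the change of symbol. Adding a pair $(1,0)$ (move~2) merely records a fibered solid torus attached along a regular fiber, so it subdivides $M_0$ without altering the fibration. Moves~1 and~3 both arise from re-choosing the trivialization of $M_0$ by the change of section dual to a properly embedded arc $\gamma$ in $F_0$; concretely, one cuts $M_0$ along the vertical annulus $\gamma\times S^1$ and reglues with one full fiberwise Dehn twist. Taking $\gamma$ to run from the $i$-th cone circle to a boundary circle --- which requires $n\ne 0$ --- produces $(\alpha_i,\beta_i)\mapsto(\alpha_i,\beta_i+\alpha_i)$, i.e.\ move~3; taking $\gamma$ to run from the $i$-th to the $(i{+}1)$-th cone circle produces $(\alpha_i,\beta_i),(\alpha_{i+1},\beta_{i+1})\mapsto(\alpha_i,\beta_i+\alpha_i),(\alpha_{i+1},\beta_{i+1}-\alpha_{i+1})$, i.e.\ move~1 (the opposite orientation of $\gamma$ gives the inverse move, which is why inverses are included). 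Permutations of the pairs are immediate since the cone points of the base orbifold are unordered.

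For the \emph{only if} direction, let $h\colon M\to M'$ be such an isomorphism. I would first descend $h$ to a diffeomorphism $\bar h$ of base orbifolds; since an orbifold diffeomorphism carries cone points to cone points of equal order and $\bar h$ is orientation-preserving, this forces $k=m$ and, after applying a permutation move, $\alpha_i=\alpha_i'$ with $\bar h$ matching corresponding cone points. Next I would restrict $h$ to a fibered solid-torus neighborhood of the $i$-th singular fiber: a fiber-preserving self-diffeomorphism of a fibered solid torus is determined up to fiber-preserving isotopy by its boundary behavior, and comparing the two gluings shows $\beta_i\equiv\beta_i'\pmod{\alpha_i}$, the residue modulo $\alpha_i$ being the only invariant visible from the solid torus alone. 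The remaining freedom is the difference between the two trivializations of the fibered part over $F_0$, classified by $H^1(F_0)$; I would check that the realizable changes of section shift the $\beta_i$ by exactly the integer combinations generated by moves~1 and~3, using the single relation $\sum_i [c_i] + \sum_j [b_j] = 0$ in $H_1(F_0)$, where $[c_i]$ is the class of the $i$-th cone circle and $[b_j]$ that of the $j$-th boundary circle. When $n\ge 1$ this relation imposes no constraint and every residue-compatible symbol is reachable; when $n=0$ it pins down $\sum_i\beta_i/\alpha_i$, so the Euler number $e=-\sum_i\beta_i/\alpha_i$ survives as the last invariant, matching the fact that moves~1 and~2 preserve $e$ while move~3 changes it --- precisely the source of the hypothesis $n\ne 0$ on move~3.

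I expect the main obstacle to be this final step: proving that the group of realizable changes of trivialization over the punctured base is generated by the elementary vertical twists of moves~1 and~3, and bookkeeping the orientation conventions so that only $\beta_i+\alpha_i$ (never $-\beta_i$) occurs --- that is, that we are genuinely classifying orientation-preserving, fiber-orientation-preserving isomorphisms rather than allowing fiber reversal. This is where the $n=0$ versus $n\ge 1$ dichotomy really enters and where care with signs is essential; the rest is the routine cut-and-reglue bookkeeping for Seifert fibrations, all of which already appears in \cite{Ma16} and originates with Seifert \cite{Se33}.
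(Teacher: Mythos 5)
The paper does not actually prove this proposition: it is recalled as a classical fact, with the reader pointed to Martelli \cite{Ma16} (ultimately Seifert \cite{Se33}), so there is no in-paper argument to compare against. Your sketch is essentially the standard proof from those sources: moves 1 and 3 realized as vertical Dehn twists along annuli $\gamma\times S^1$ over properly embedded arcs in the punctured base, move 2 as drilling and refilling a regular fiber, and, for the converse, reduction to the residues $\beta_i\bmod\alpha_i$ plus the change-of-section ambiguity measured by the image of $H^1(F_0)\to H^1(\partial F_0)$, with the $n=0$ versus $n\geq 1$ dichotomy coming from the relation among boundary classes. For orientable bases this is correct and is the argument the cited references give.

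There is, however, one genuine gap: the proposition allows $g<0$, and for a non-orientable base the piece $M_0$ is \emph{not} a product $F_0\times S^1$. Orientability of the total space forces the circle bundle over $F_0$ to be the twisted one (its $w_1$ must equal $w_1(F_0)$), so there is no global section, and the relation $\sum_i[c_i]+\sum_j[b_j]=0$ you invoke fails in $H_1(F_0)$ (on the M\"obius band the boundary is twice the core). Your final step — identifying the realizable shifts of the $\beta_i$ with the kernel of the sum map on $H^1(\partial F_0)$ — therefore does not go through verbatim when $g<0$. The statement remains true there, but one must work with a section over the orientation double cover (or a section with prescribed discontinuities) and check that transporting a fibered solid torus around an orientation-reversing loop sends the meridian class $\alpha\sigma+\beta\varphi$ to its negative, hence fixes the unoriented slope and introduces no new move; that verification is exactly what your write-up omits. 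Two smaller points: the symbols may contain entries with $\alpha_i=1$ that correspond to no cone point, so ``$k=m$'' only holds after normalizing those away with moves 1--3; and your $e=-\sum_i\beta_i/\alpha_i$ is the opposite sign convention from the Definition the paper gives immediately after this proposition, which matters when you assert that move 3 is the unique move changing $e$.
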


Let $T_i$ denote the boundary of a fibered solid torus neighborhood of the singular fiber with multiplicity $(\alpha_i,\beta_i)$. Move 1 is the result of twisting along a fibered annulus connecting the tori $T_i$ and $T_{i+1}$; this self-diffeomorphism acts on $T_i$ and $T_{i+1}$ like two opposite Dehn twists along the fiber direction and extends to an isomorphism of the fibrations. Move 2 corresponds to drilling a nonsingular fibered torus and refilling it back. Move 3 is twisting along an annulus connecting $T_i$ and a boundary component. Then a fiber-preserving homeomorphism consists of these three moves and their inverses, and permutations of the singular fibers or the boundary components. This motivates the following definition.
\begin{definition}
The Euler number of the Seifert fibration $S(g,n: (\alpha_1,\beta_1), \dots, (\alpha_k,\beta_k))$ is the rational number
\begin{align*}
    e = \sum\limits_{i=1}^k \frac{\beta_i}{\alpha_i}.
\end{align*}

\end{definition}

When $n = 0$ the Euler number, and when $n \neq 0$ the Euler number modulo $\mathbb{Z}$ are isomorphism invariants of Seifert fibrations. In fact, if there exists an orientation-preserving homeomorphism between two isomorphic Seifert fibrations of a manifold with boundary that preserves the fibrations, then the Euler number is quite powerful to determine details about the induced homeomorphism on the boundary. Thus, we have the following.

\begin{proposition}{\label{PropHatcher}}
Let $M$ be a Seifert fibered space. Suppose $S_1$ and $S_2$ are two isomorphic Seifert fibrations of $M$. If $\partial M = \emptyset$, then $e(S_1)=e(S_2)$. If $M$ has boundary, then $e(S_1) \equiv e(S_2)\pmod{1}$ and, furthermore; if $e(S_1) \neq e(S_2)$, then any orientation-preserving homeomorphism from $S_1$ to $S_2$ that is also fiber-preserving has the effect of a total number of $e(S_1)-e(S_2)$ Dehn twists along the fiber directions on the boundary.
\end{proposition}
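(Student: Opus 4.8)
The plan is to reduce the whole statement to the classification of isomorphic parameterizations in Proposition~\ref{PropMoves}, using the Euler number as a bookkeeping device and then reading off the boundary behaviour move by move.

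\textbf{Step 1 (behaviour of $e$ under the moves).} I would first record how $e = \sum \beta_i/\alpha_i$ transforms. Move~1 replaces $\tfrac{\beta_i}{\alpha_i}$ by $\tfrac{\beta_i}{\alpha_i}+1$ and $\tfrac{\beta_{i+1}}{\alpha_{i+1}}$ by $\tfrac{\beta_{i+1}}{\alpha_{i+1}}-1$, so it leaves $e$ unchanged; move~2 appends $(1,0)$, contributing $0$; permutations obviously fix $e$. The only move that changes $e$ is move~3, $(\alpha_i,\beta_i)\mapsto(\alpha_i,\beta_i+\alpha_i)$, which changes $e$ by exactly $+1$ (its inverse by $-1$), and it is permitted only when $n\neq 0$. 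Since by Proposition~\ref{PropMoves} the two given parameterizations of $M$ are related by a finite sequence of these moves, permutations, and their inverses, it follows immediately that $e(S_1)=e(S_2)$ when $\partial M=\emptyset$ (no instance of move~3 can occur), and that in general $e(S_1)-e(S_2)$ is the signed count of move~3's in the sequence, an integer; in particular $e(S_1)\equiv e(S_2)\pmod 1$.

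\textbf{Step 2 (boundary effect of the moves).} Next I would match each move with a fiber-preserving self-homeomorphism realizing it, as described after Proposition~\ref{PropMoves}: move~1 is a vertical Dehn twist along a fibered annulus joining two exceptional-fiber tori, move~2 is drilling and refilling a regular fibered solid torus, and permutations are covered by ambient isotopies of the base orbifold; each can be chosen supported in the interior of $M$, hence isotopic to the identity on $\partial M$. Move~3 is a vertical Dehn twist along a fibered annulus running from an exceptional-fiber torus to a boundary torus $T$, and its restriction to $T$ is precisely one Dehn twist along the fiber direction. Composing realizations of a move sequence carrying the parameterization of $S_1$ to that of $S_2$ therefore yields an orientation-preserving fiber-preserving homeomorphism $h_0\colon S_1\to S_2$ whose total number of fiber-direction Dehn twists over $\partial M$ equals the signed count of move~3's, hence equals $e(S_1)-e(S_2)$ by Step~1. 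This already proves the ``furthermore'' clause for $h_0$.

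\textbf{Step 3 (an arbitrary fiber-preserving homeomorphism).} Finally, given any orientation-preserving fiber-preserving $h\colon S_1\to S_2$, I would write $h=h_0\circ g$ with $g=h_0^{-1}\circ h$ a fiber-preserving orientation-preserving self-homeomorphism of $S_1$, and show that its total fiber-direction twisting on $\partial M$ vanishes; additivity of this ``total twisting'' under composition and its invariance under fiber-preserving isotopy then give that $h$ and $h_0$ have the same total boundary twisting. Up to fiber-preserving isotopy $g$ is a product of move~1/2/3 realizations, permutation realizations, lifts of mapping classes of the base orbifold, and vertical Dehn twists along fibered annuli with both ends on $\partial M$; the last type contributes $+d$ on one boundary torus and $-d$ on another (and $0$ when the annulus is $\partial$-parallel), base-orbifold lifts and permutations contribute $0$, and the net contribution of the move~3 realizations is the net change they make in $e$, namely $e(S_1)-e(S_1)=0$ since $g$ is a self-homeomorphism. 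I expect the main obstacle to be exactly this last step: making precise, via the structure theory of fiber-preserving homeomorphisms of Seifert fibered spaces with nonempty boundary (following \cite{Ma16}, ultimately \cite{Se33}), that the total fiber-direction twisting along $\partial M$ is a well-defined homomorphism on the fiber-preserving mapping class group which is killed by every generator except the ``exceptional-fiber-to-boundary'' twist, whose contribution is precisely what the Euler number records.
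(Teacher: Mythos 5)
Your proposal is correct and follows essentially the same route as the paper: both parts reduce to Proposition \ref{PropMoves}, with the Euler number statements coming from the observation that only Move 3 changes $e$ (by $\pm 1$, and only when $\partial M \neq \emptyset$), and the ``furthermore'' clause coming from the geometric realization of Move 3 as a vertical Dehn twist along an annulus joining a singular-fiber torus to a boundary torus. If anything, your Step 3 --- factoring an arbitrary fiber-preserving homeomorphism through a model $h_0$ and checking that self-homeomorphisms contribute zero net boundary twisting (in particular that twists along boundary-to-boundary annuli cancel) --- is more careful than the paper's proof, which passes directly from Proposition \ref{PropMoves} to the equality $\Sigma_{i=1}^n k_i = \Sigma_{i=1}^k k^\prime_i = e(S_1)-e(S_2)$ without isolating those contributions.
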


\begin{proof}
The first statement and the first part of the second statement are obvious by Proposition \ref{PropMoves} as Moves 1 and 2 do not change the Euler number while Move 3 increases it by 1. For the last statement, let $S_1 = S(g,n: (\alpha_1,\beta_1), \dots, (\alpha_k,\beta_k))$ and $S_2 = S(g,n:(\alpha^\prime_1,\beta^\prime_1, \dots , (\alpha^\prime_m, \beta^\prime_m))$, and $\{M_1, \dots ,M_n\}$ be the connected components of $\partial M$. Suppose there exists an orientation-preserving homeomorphism $h$ from $S_1$ to $S_2$. Then there exists a bijective map $r:\{1,\dots ,n\} \rightarrow \{1, \dots , n\}$ defined by $h(M_i)=M_{r(i)}$ for $i \in \{1, \dots, n\}$. For convenience, let $h_i = (h|_{M_i})_* : H_1(M_i) \rightarrow H_1(M_{r(i)})$. We may choose a basis $(s_i,f_i)$ for $H_1(M_i)$ such that $s_i$ is determined by a fixed section of $S_1$ and $f_i$ is a fiber of $S_1$. Because $M$ has a unique Seifert fibration up to isomorphism, $h$ is fiber-preserving and hence $h(f_i)=f_{r(i)}$. Then $h(s_i) = s_{r(i)} + k_i f_{r(i)}$. Since $e(S_1) \neq e(S_2)$, by Proposotion \ref{PropMoves} some of these Dehn twists result from twisting along an annulus connecting a neighborhood of a singular fiber and a boundary component. Then for each $i\in \{1,\dots ,k\}$ there exists $k^\prime_i \in \mathbb{Z}$ such that $h$ sends $(\alpha_i,\beta_i)$ to $(\alpha_j, \beta_j - k^\prime_i \alpha_j)$ for some $j\in \{1, \dots, n\}$ (possibly with $i=j$) such that $\Sigma_{i=1}^k k^\prime_i = \Sigma_{i=1}^n k_i$ and we see that $\Sigma_{i=1}^k k^\prime_i = e(S_1)-e(S_2)$. Thus, we obtain the desired equality $\Sigma_{i=1}^n k_i = \Sigma_{i=1}^k k^\prime_i = e(S_1)-e(S_2)$.

\end{proof}
As it turns out there are only a few 3-manifolds that have non-isomorphic Seifert fibrations. The classification of Seifert fibrations up to isomorphism is completely known and is given below. We write $S^3$ and $S^2 \times S^1$ as the lens spaces $L(1,n)$ and $L(0,1)$, respectively, and $P(p,q)$ to denote the prism manifolds.

\begin{theorem}{\label{SFSclassification}}
Every Seifert fibered space has a unique Seifert fibration up to isomorphism, except the following:
\begin{itemize}
    \item $L(p,q)$ fibres over $S^2$ with $\leq$ 2 singular points in infinitely many ways,
    \item $\mathbb{D}^2 \times S^1$ fibres over $\mathbb{D}^2$ with $\leq 1$ singular points in infinitely many ways,
    \item $P(p,q) \cong S(0,0;(2,1),(2,-1),(p,q)) \cong S(-1,0;(q,p))$,
    \item $M \simtimes S^1 \cong S(0,1;(2,1),(2,-1))$,
    \item $K \simtimes S^1 \cong S(0,0;(2,1),(2,1),(2,-1),(2,-1))$.
\end{itemize}
Here $M$ and $K$ are the Mobius strip and the Klein bottle. 
\end{theorem}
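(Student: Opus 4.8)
The plan is to derive the classification from the standard structure theory of Seifert fibered spaces, organized by the Haken/non-Haken dichotomy. Since an isomorphism of fibrations is a fiber-preserving diffeomorphism, I would first reduce to the following statement: if $M$ carries two Seifert fibrations $\mathcal{F}_1$ and $\mathcal{F}_2$, then either their generic fibers are isotopic in $M$ — in which case one builds a fiber-preserving self-diffeomorphism carrying one fibration to the other out of twists along vertical annuli, and Proposition \ref{PropMoves} shows the two parameter lists are move-equivalent — or $M$ appears on the exceptional list. So the real content is to pin down exactly the manifolds admitting fibrations with non-isotopic generic fibers.

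For the Haken case the key input is the verticalization theorem (Waldhausen): in a Haken Seifert fibered space every essential annulus and torus is isotopic to a vertical one. Given $\mathcal{F}_2$ I would take its canonical decomposing family — the vertical tori bounding fibered neighborhoods of the singular fibers together with vertical annuli cutting the base orbifold into disks — isotope it to be vertical with respect to $\mathcal{F}_1$ as well, and observe that on each such surface the two circle foliations must then coincide up to isotopy. Propagating this across the complementary pieces, each of which is a fibered solid torus or a product $(\text{pair of pants})\times S^1$ whose fibration is genuinely unique, forces the generic fibers of $\mathcal{F}_1$ and $\mathcal{F}_2$ to agree up to isotopy. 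This works whenever $\mathcal{F}_2$ has enough essential annuli and tori; the argument degenerates only when the base orbifold is a disk, an annulus, a Möbius band, or a sphere / projective plane / torus / Klein bottle with very few cone points, and one then checks directly that the resulting residual Haken manifolds are precisely $S^1\times D^2$, $T^2\times I$, the twisted $I$-bundle over the Klein bottle, and the closed manifolds $M \simtimes S^1$ and $K \simtimes S^1$; in each I would exhibit the competing fibrations and distinguish them by the base orbifold together with the Euler number of Proposition \ref{PropHatcher}.

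For the non-Haken case $M$ is irreducible and atoroidal, hence a spherical space form, $S^2 \times S^1$, or a small Seifert fibered space over $S^2(\alpha_1,\alpha_2,\alpha_3)$ or $\mathbb{R}P^2(\alpha)$ with no incompressible vertical torus, and the argument becomes algebraic. When $\pi_1(M)$ is infinite, a generic fiber generates an infinite cyclic normal ``almost central'' subgroup, and one shows that this subgroup is either characteristic or has a single orbit under the self-diffeomorphisms of $M$ (as happens for $T^3$, whose mapping class group realizes all of $GL_3(\mathbb{Z})$), so the fibration is unique up to isomorphism — the finite list of failures being exactly the manifolds whose fundamental group is too small or too symmetric: lens spaces (including $S^3 = L(1,n)$ and $S^2\times S^1 = L(0,1)$), where the Hopf-type fibrations over $S^2$ with at most two cone points form an infinite family; the solid torus, where the core refibers with arbitrary slope; the prism manifolds $P(p,q)$, where the two conjugacy classes of fiber subgroups in the binary dihedral group yield $S(0,0;(2,1),(2,-1),(p,q))$ and $S(-1,0;(q,p))$; and $M \simtimes S^1$, $K \simtimes S^1$, whose groups admit two such subgroups producing the listed pairs. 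For each exception I would confirm non-isomorphism directly: compare base orbifolds when they differ, and otherwise compare Euler numbers via Proposition \ref{PropMoves}.

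The hard part is the rigidity direction — showing that a manifold off the list has a unique fibration. In the Haken range this is the bookkeeping of which small base orbifolds are genuinely special; in the non-Haken range it is the group-theoretic statement that the fiber subgroup is characteristic (or has trivial $\mathrm{Out}$-orbit), which ultimately rests on the classification of the groups arising from the $\mathbb{H}^2\times\mathbb{R}$, $\widetilde{\SLR}$, $\mathrm{Nil}$, and Euclidean model geometries, the exceptional list being precisely the set of cases where that statement fails. Once that is in hand, the remaining verifications — that the listed exceptional fibrations exist and are pairwise non-isomorphic — are a finite computation using the invariants already packaged in Propositions \ref{PropMoves} and \ref{PropHatcher}.
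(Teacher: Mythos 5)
The paper does not actually prove Theorem \ref{SFSclassification}: it is quoted as a classical result (Seifert, Waldhausen, Orlik--Vogt--Zieschang), with the exposition following \cite{Ma16}, so there is no in-paper argument to compare against. Your outline is essentially the standard textbook proof, and at the level of a plan it identifies the right dichotomy (large Haken versus small/atoroidal pieces) and the right exceptional phenomena: lens spaces and the solid torus via Hopf-type refiberings, prism manifolds via the two fiber subgroups of a binary dihedral group, and the two Euclidean pieces.

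Two points need repair before this could be executed. First, the verticalization input is misstated: in a Seifert fibered Haken manifold an essential annulus or torus is isotopic to a vertical \emph{or horizontal} surface, not always a vertical one, and the horizontal cases are not a nuisance to be excluded but are precisely the mechanism producing several of the exceptions (the horizontal annulus in $S(-1,1)$ exhibits the second fibration of $M\simtimes S^1$, and the horizontal torus in $S(0,0;(2,1),(2,1),(2,-1),(2,-1))$ exhibits the torus-bundle structure of $K\simtimes S^1$). As written, your step ``isotope the decomposing family of $\mathcal{F}_2$ to be vertical for $\mathcal{F}_1$'' silently assumes away exactly the cases the theorem is about; the degeneration analysis has to be driven by the vertical/horizontal dichotomy (equivalently by the orbifold Euler characteristic and the Euler number), not merely by a shortage of essential annuli. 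Second, you conflate uniqueness up to isomorphism with uniqueness up to isotopy: $T^2\times I$ (and likewise $T^3$ and $M_{\mathbb{R}P^2(2,2)}$) carries infinitely many non-isotopic fibrations, but they are all carried to one another by self-diffeomorphisms, which is why these manifolds appear in Theorem \ref{SFSclassuptoiso} but not in Theorem \ref{SFSclassification}; your plan to ``distinguish the competing fibrations by base orbifold and Euler number'' would fail for $T^2\times I$ because there is nothing to distinguish. (Also, the twisted $I$-bundle over the Klein bottle and $M\simtimes S^1$ are the same manifold, so your residual list double-counts it.) With these two corrections the outline matches the standard proof of the cited result.
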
  

As now we know almost all Seifert fibered spaces admit a unique Seifert fibration up to isomorphism, we may further wish to know whether any homeomorphism between isomorphic Seifert fibrations is fiber-preserving or isotopic to one that is fiber-preserving. We say the Seifert fibration of $M$ is \textit{unique up to isotopy} if any homeomorphism between two Seifert fibrations of $M$ is isotopic to a fiber-preserving homeomorphism. It is determined in \cite[Corollary~3.12]{JWW01} exactly which Seifert manifolds have a unique Seifert fibration up to isotopy. Recall from \cite[p.~446]{Sc83} that there is a unique orientable Euclidean 3-manifold which admits the Seifert fibration $S(-1,0;(2,1),(2,-1))$. We denote this manifold by $M_{\mathbb{R}P^2(2,2)}$. The list in the below theorem is sharp as there exist homeomorphisms on these manifolds that do not preserve any Seifert fibrations they admit.

\begin{theorem}[\cite{JWW01}]{\label{SFSclassuptoiso}}
Let $M$ be a Seifert fibered space. Then the Seifert fibration of $M$
is unique up to isotopy if and only if $M$ is not $T^3$, $M_{\mathbb{R}P^2(2,2)}$, $T^2 \times I$, or one of the
manifolds listed in Theorem \ref{SFSclassification}.
\end{theorem}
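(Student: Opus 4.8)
The plan is to derive the statement by combining Theorem \ref{SFSclassification} — which already pins down exactly the manifolds admitting non-isomorphic Seifert fibrations — with Waldhausen's isotopy theorem for Haken manifolds, according to which a homotopy equivalence that is a homeomorphism on the boundary is homotopic (rel boundary) to a homeomorphism, and homeomorphisms of Haken manifolds that are homotopic rel boundary are isotopic rel boundary. Granting Theorem \ref{SFSclassification}, proving that the fibration of $M$ is unique up to isotopy reduces to showing that every self-homeomorphism of $M$ is isotopic to one preserving a fixed fibration $\mathcal{F}$, equivalently that any two Seifert fibrations of $M$ differ by an ambient isotopy. For the ``only if'' direction one must in addition exhibit, for each manifold on the exceptional list, a homeomorphism carrying some fibration to a non-isotopic one; since the application in this paper uses only uniqueness in the non-exceptional cases, I would concentrate on the ``if'' direction.

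First I would dispose of the Haken cases: suppose $M$ is irreducible and contains a two-sided incompressible surface — for instance $\partial M \neq \emptyset$ with $M$ not a solid torus, or $M$ closed containing an essential torus. By the characteristic-submanifold theory of Jaco--Shalen and Johannson, together with Waldhausen's analysis of incompressible surfaces in Seifert fibered spaces, every essential torus or annulus in $M$ can be isotoped to be \emph{vertical} (a union of fibers of $\mathcal{F}$) or \emph{horizontal} (transverse to every fiber). A horizontal essential surface forces $M$ to be a surface bundle over $S^1$ or a twisted $I$-bundle, which are precisely the $T^2\times I$ and the $M\simtimes S^1$, $K\simtimes S^1$ situations flagged as exceptions. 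In the remaining Haken cases the JSJ-tori may be taken vertical; a homeomorphism, isotoped to preserve the JSJ collection, restricts to the pieces, and an induction on the number of JSJ-pieces reduces to atoroidal Seifert pieces — base orbifold a disk, annulus, Mobius band, pair of pants, or $S^2$/$\mathbb{R}P^2$ with few cone points, or a composing/cable space — which are handled by hand or by the argument below.

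Next I would treat the manifolds whose base orbifold is ``large'', i.e.\ has negative orbifold Euler characteristic. Here the fiber subgroup $\mathbb{Z}\leq\pi_1(M)$ is characterized purely algebraically: it is the center when the base is orientable, and in general the unique maximal normal cyclic subgroup, so it is preserved by every automorphism of $\pi_1(M)$. Since such an $M$ is aspherical and Haken, Waldhausen's theorem promotes this rigidity to the conclusion that every self-homeomorphism is isotopic to a fiber-preserving one. The closed non-Haken aspherical cases — small Seifert fibered spaces over $S^2$ with three cone points of Euclidean or hyperbolic triangle type, or over $\mathbb{R}P^2$ with cone points — carry $\mathrm{Nil}$, $\widetilde{SL_2\mathbb{R}}$, or Euclidean geometry, and here Scott's analysis of these geometries (the isometry group preserves the fiber direction up to the few flat exceptions) gives uniqueness up to isotopy; the genuine failures are exactly the flat manifolds $T^3$ and $M_{\mathbb{R}P^2(2,2)}$, where extra flat symmetries permute distinct fibrations.

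The last and, I expect, hardest step is the bookkeeping for the remaining small Seifert fibered spaces: spherical manifolds — lens spaces, prism manifolds $P(p,q)$, and the three Platonic families over $S^2(2,3,3)$, $S^2(2,3,4)$, $S^2(2,3,5)$ — together with $S^2\times S^1$ and the solid torus $\mathbb{D}^2\times S^1$. For lens spaces, $S^2\times S^1$, $\mathbb{D}^2\times S^1$, and $P(p,q)$ one verifies directly that they admit infinitely many (respectively finitely many) non-isotopic fibrations; these are exactly the entries of Theorem \ref{SFSclassification}, so no new claim is needed. For the Platonic spherical manifolds the fibration over $S^2$ with three cone points of distinct orders is rigid because the finite isometry group cannot permute cone points of different orders, but ruling out an exotic further fibration requires the diffeotopy-group computations of Bonahon and of Boileau--Otal for spherical space forms. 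Assembling these ingredients — Theorem \ref{SFSclassification}, Waldhausen's isotopy theorem, characteristic-submanifold theory, centrality of the fiber over a large base, Scott's geometric classification, and the finite-group case analysis — yields the theorem. The principal obstacle is precisely that the small, non-Haken, spherical and Euclidean manifolds admit no uniform treatment and must be checked one family at a time against the list of Theorem \ref{SFSclassification}.
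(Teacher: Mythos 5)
The paper does not prove this statement: it is quoted verbatim from \cite[Corollary~3.12]{JWW01} and used as a black box, so there is no in-paper argument to measure your proposal against. That said, your outline is a faithful reconstruction of how the result is actually established in the literature: separating the Haken case (vertical/horizontal position of essential surfaces, with horizontal surfaces accounting for $T^2\times I$ and the twisted $I$-bundle exceptions, plus Waldhausen's isotopy theorem), the large-base case (the fiber class is characteristic in $\pi_1$, so asphericity plus Waldhausen upgrades homotopy rigidity to isotopy rigidity), the geometric non-Haken aspherical case (Scott's analysis, isolating $T^3$ and $M_{\mathbb{R}P^2(2,2)}$ as the flat failures), and the finite list of small spherical manifolds (where lens spaces, $\mathbb{D}^2\times S^1$, prism manifolds, and the twisted bundles are exactly the entries of Theorem \ref{SFSclassification}, and the Platonic families need the mapping class group computations of Bonahon and Boileau--Otal). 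You correctly note that the ``only if'' direction requires exhibiting non-isotopic fibrations on each exceptional manifold and that the paper only consumes the ``if'' direction. The one caveat is that what you have written is an architecture rather than a proof: each of the four stages delegates its substance to a major external theorem, and the small-manifold bookkeeping is asserted rather than carried out. For a citation-level result of this depth that is the appropriate resolution, and I see no step in your division of cases that would fail.
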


Any JSJ-torus in a $\mathbb{Q}$-homology sphere bounds a $\mathbb{Q}$-homology solid torus on each side. Because $\tilde{X}(\frac{1}{n})$ is a $\mathbb{Q}$-homology sphere, it is also important for us to know about homeomorphisms between $\mathbb{Q}$-homology solid tori. When dealing with homeomorphisms between manifolds with torus boundary, we need to know how the slopes are mapped. There is a well-defined slope called \textit{the rational longitude} on the boundary of a $\mathbb{Q}$-homology solid torus and as seen later this slope will be essential in our argument. For more detail on the rational longitude, we refer the reader to \cite[Section~3.1]{Wa12}.

\begin{definition}
Let $M$ be a $\mathbb{Q}$-homology solid torus and $i_* :H_1(\partial M;\mathbb{Q}) \rightarrow H_1(M;\mathbb{Q})$ be the homomorphism induced by the inclusion map $i: \partial M \hookrightarrow M$. Then the rational longitude $\lambda_M$ of $M$ is defined to be the unique slope on $\partial M$ such that $i_*(\lambda_M)$ is zero. 
\end{definition}

\begin{remark}
Let $M_i$ denote a JSJ-piece of a $\mathbb{Q}$-homology solid torus $M$, then each connected component of $M \setminus M_i$ except the one containing $\partial M$ is a $\mathbb{Q}$-homology solid torus as well. Geometrically, the rational longitude $\lambda_M$ is characterized among all slopes by the property that finitely many coherently oriented parallel copies of it bounds an essential surface in $M$. Also note that for homological reasons any homeomorphism between two $\mathbb{Q}$-homology solid tori is rational longitude-preserving.
\end{remark}

In our setting, we observe the following.
\begin{lemma}{\label{FneqRL}}
The rational longitudes of $\tilde{X}, Y,$ and $-Y$ are not meridional. Furthermore, suppose there exists an incompressible torus $T$ in $\tilde{X}$ and let $N$ be the $\mathbb{Q}$-homology solid torus bounded by $T$. Then there cannot exist an essential annulus in $\tilde{X}$ such that one of its boundary lies on $T$ and has slope $\lambda_N$, and the other is meridional on $\partial \tilde{X}$.
\end{lemma}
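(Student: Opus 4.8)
The plan is to prove both statements by homological computations in $\tilde X$ and its pieces, using the structure established in Section \ref{S2}. For the first statement, recall that $\tilde X = \Sigma(J\#-J)_{\overline K}$ and that, by Lemma \ref{L4}, $\overline K = K' \# -K'$ with $Y = \Sigma(J)\setminus\nu(K')$. The meridian $\mu$ of $\overline K$ bounds a disk in $\tilde B$, so $\tilde X(\mu) = \tilde X(\tfrac10) = \Sigma(J\#-J) = \Sigma(J)\#\Sigma(-J)$, which is a $\mathbb QHS^3$. If $\mu$ were the rational longitude $\lambda_{\tilde X}$, then $\tilde X(\mu)$ would have positive first Betti number (Dehn filling a $\mathbb QHS^3$-solid-torus along its rational longitude is the unique filling that does not yield a $\mathbb QHS^3$; see \cite[Section~3.1]{Wa12}), a contradiction. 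The same argument, applied to $Y$ with the meridian of $K'$ in $\Sigma(J)$ — whose filling is $\Sigma(J)$, a $\mathbb QHS^3$ — shows $\lambda_Y$ is not meridional, and symmetrically for $-Y$. (Alternatively, one computes directly that the meridional filling of each of these pieces kills no first homology, since $K'\subset\Sigma(J)$ is null-homologous rationally only after the filling changes $b_1$.)

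For the second statement, suppose toward a contradiction that such an essential annulus $A$ exists, with $\partial A = \alpha \cup \beta$, where $\alpha \subset T$ has slope $\lambda_N$ and $\beta \subset \partial\tilde X$ is meridional. I would first use $\alpha = \lambda_N$: by the geometric characterization of the rational longitude (the Remark preceding the lemma), some number $m\ge 1$ of coherently oriented parallel copies of $\lambda_N$ bounds an essential surface $F$ in $N$. Gluing $m$ parallel copies of $A$ to $F$ along these copies of $\lambda_N$ produces an essential surface $F'$ in $\tilde X$ whose boundary consists of $m$ parallel meridional copies on $\partial\tilde X$. But $\partial\tilde X$ is the boundary of the solid torus $\tilde B$ lifted from the twist-region ball $B$, and the meridian $\mu$ bounds a disk there; more to the point, $m$ meridional curves on $\partial\tilde X$ bound in $\tilde X$ only if $\mu = \lambda_{\tilde X}$, which contradicts the first part of the lemma. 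Hence no such $A$ exists.

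The step I expect to be the main obstacle is verifying that the glued-up surface $F'$ is genuinely \emph{essential} (incompressible and boundary-incompressible) in $\tilde X$, rather than just an immersed or compressible surface: one must rule out that a compressing disk for $F'$ in $\tilde X$ could be isotoped to meet $T$ in a way that destroys the argument, and that the pieces $F$ (essential in $N$) and $A$ (essential in $\tilde X$, hence in the complementary piece) patch together without creating compressions across $T$. Here I would invoke irreducibility of $\tilde X$ (established earlier, since $K$ is prime) and incompressibility of $T$ to run a standard innermost-disk/outermost-arc argument: any compressing disk for $F'$ can be isotoped to be disjoint from $T$, landing in either $N$ or its complement, and in either case contradicts essentiality of $F$ or of $A$. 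A secondary technical point is bookkeeping the orientations so that the $m$ copies of $\alpha$ and the $m$ copies of $\beta$ match up coherently; this is routine once one fixes the meridian-longitude conventions from Section \ref{S2}.
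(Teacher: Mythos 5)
Your proposal is correct, and both halves run on the same engine as the paper's proof: the characterization of the rational longitude as the unique slope that dies in $H_1(\,\cdot\,;\mathbb{Q})$. For the first claim your argument is essentially identical to the paper's -- the paper phrases it as "$H_2(\tilde{X}(\lambda_{\tilde{X}}))\neq 0$ by Mayer--Vietoris, but $H_2(\Sigma(J\#-J))=0$," which for a closed orientable $3$-manifold is the same as your $b_1$ statement. For the second claim you take a mildly different route: you glue $m$ parallel copies of $A$ to the surface in $N$ bounded by $m$ copies of $\lambda_N$ and conclude that $m\mu$ bounds in $\tilde{X}$, forcing $\mu=\lambda_{\tilde{X}}$; the paper instead caps off the meridional boundary of $A$ by a disk in $\tilde{X}(\mu)=\Sigma(J\#-J)$, observes that a neighborhood of $N\cup A\cup D$ is a punctured $N(\lambda_N)$, and derives $H_2(\Sigma(J\#-J))\neq 0$ from Mayer--Vietoris. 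Both are valid; yours has the advantage of reducing the second claim directly to the first.

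One substantive comment: the issue you flag as "the main obstacle" -- verifying that the glued-up surface $F'$ is essential -- is not actually needed. The only thing your argument uses is that $[\partial F'] = m[\mu]$ vanishes in $H_1(\tilde{X})$, and this holds for any compact oriented surface (embedded or not, compressible or not) mapped into $(\tilde{X},\partial\tilde{X})$, since a boundary always bounds. From $m\,i_*[\mu]=0$ you get $i_*[\mu]=0$ in $H_1(\tilde{X};\mathbb{Q})$, hence $\mu=\lambda_{\tilde{X}}$ by uniqueness of the rational longitude, contradicting the first part. In fact you can bypass the surface $F$ entirely: the annulus $A$ alone shows $i_*[\lambda_N]=\pm i_*[\mu]$ in $H_1(\tilde{X})$, and $i_*[\lambda_N]$ is already torsion because it is torsion in $H_1(N)$; so $i_*[\mu]$ is torsion and $\mu=\lambda_{\tilde{X}}$. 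The orientation bookkeeping you mention is the only thing to check, and it is routine as you say.
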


\begin{proof}
Let $V$ denote the solid torus glued to $\tilde{X}$ to obtain $\tilde{X}(\lambda_{\tilde{X}})$. From the definition of the rational longitude and the surgery description, it is immediate that the Mayer-Vietoris sequence for the triple $(\tilde{X},V,\tilde{X}(\lambda_{\tilde{X}}))$ yields $H_2(\tilde{X}(\lambda_{\tilde{X}})) \neq 0$. If $\lambda_{\tilde{X}}$  = $\mu$, then $\tilde{X}(\lambda_{\tilde{X}})$ $ = \tilde{X}(\mu) = \Sigma(J \# -J)$. But $ H_2(\Sigma(J \# -J)) = 0$, so we reach a contradiction. Same argument works for $Y$ (and $-Y$) as $H_2(Y(\lambda_Y)) \neq 0$ and $Y(\mu_Y)\cong \Sigma(J)$. For the second statement, suppose for contradiction that such an annulus exists and call it $A$. Let $D$ denote the disk bounded by the boundary component of $A$ on $\partial \tilde{X}$ in $\tilde{X}(\mu)$. Then a regular neighborhood of $N \cup A \cup D$ is homeomorphic to $N(\lambda_N)_o$, a punctured $N(\lambda_N)$. However, again the Mayer-Vietoris sequence for the triple $(N(\lambda_N)_o, \tilde{X}(\mu)\setminus N(\lambda_N)_o, \tilde{X}(\mu))$ yields $H_2(\tilde{X}(\mu)) \neq 0$ and it is a contradiction.

\end{proof}

The following is a technical lemma that will be used in proving Theorem \ref{ThmC}.

\begin{lemma}{\label{Ltorus}}
Let $\varphi: T^2 \rightarrow T^2$ be an orientation-preserving homeomorphism. For a fixed meridian-longitude pair for $T^2$, suppose there are two slopes $\frac{p}{q}$ and $\frac{p^\prime}{q^\prime}$ that are preserved under $\varphi$. Then either $\varphi$ is isotopic to $\pm Id$ or $\frac{p}{q} = \frac{p^\prime}{q^\prime}$.
\end{lemma}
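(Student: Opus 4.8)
The plan is to work directly with the action of $\varphi$ on $H_1(T^2)\cong\mathbb{Z}^2$. Since $\varphi$ is an orientation-preserving self-homeomorphism of the torus, $\varphi_*$ is represented by a matrix $A\in SL_2(\mathbb{Z})$, and by the classification of mapping classes of the torus $\varphi$ is isotopic to $\pm\mathrm{Id}$ precisely when $A=\pm I$. A slope $\frac{p}{q}$ (with the fixed meridian-longitude basis) corresponds to the primitive vector $v=(p,q)^T$ up to sign, and ``$\frac{p}{q}$ is preserved under $\varphi$'' means $Av=\pm v$, i.e.\ $v$ is an eigenvector of $A$ with eigenvalue $\pm 1$.

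First I would reduce to showing: if $A\in SL_2(\mathbb{Z})$ has two eigenvectors $v,v'$ (each corresponding to an honest rational slope, hence with real entries) with eigenvalues in $\{+1,-1\}$, then either the two slopes coincide or $A=\pm I$. If the two eigenvalues are equal, say both $+1$: then $A$ fixes two linearly independent vectors (if the slopes are distinct) and hence $A=I$; if $v,v'$ are parallel the slopes are equal. The same argument with both eigenvalues $-1$ gives $A=-I$. So the only remaining case is when one eigenvalue is $+1$ and the other is $-1$: then $\det A = (+1)(-1) = -1$, contradicting $A\in SL_2(\mathbb{Z})$ (orientation-preserving forces $\det A = +1$). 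Hence this mixed case cannot occur, and we are done.

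The one subtlety to address carefully is that ``$\varphi$ preserves the slope $\frac{p}{q}$'' a priori only says $Av=\pm v$ for \emph{some} sign possibly depending on the slope — so I should not assume both eigenvalues have the same sign; that is exactly why the $\det A=\pm1$ bookkeeping is the crux. Another minor point: I should note that an eigenvalue of $\pm1$ forces the other eigenvalue to be $\mp1$ or the same, since the product of the eigenvalues is $\det A=1$ and their sum is the integer $\mathrm{tr}(A)$; concretely, if $1$ is an eigenvalue then $\det A = 1$ forces the other eigenvalue to be $1$ as well, and similarly $-1$ forces $-1$. This actually kills the mixed case immediately without even invoking $v'$: a single eigenvalue $+1$ already pins the other eigenvalue to $+1$. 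So the real content is just: two distinct preserved slopes give two independent eigenvectors with the \emph{same} eigenvalue $\varepsilon\in\{\pm1\}$ (forced by the determinant), hence $A=\varepsilon I$, hence $\varphi\simeq\pm\mathrm{Id}$.

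I expect no genuine obstacle here — the argument is a short piece of linear algebra over $\mathbb{Z}$ once it is set up. The only thing to be disciplined about is the translation between slopes and primitive vectors up to sign, and the invocation of the standard fact that $\pi_0(\mathrm{Homeo}^+(T^2))\cong SL_2(\mathbb{Z})$ via the action on $H_1$, so that $A=\pm I$ is equivalent to $\varphi$ being isotopic to $\pm\mathrm{Id}$. I would state that fact with a reference (or as ``well-known'') and then give the three-line determinant argument above.
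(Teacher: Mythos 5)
Your proof is correct and takes essentially the same route as the paper: both pass to $\varphi_* \in SL_2(\mathbb{Z})$ acting on $H_1(T^2)$ and observe that a preserved slope is a $\pm 1$-eigenvector, reducing the lemma to a short piece of integer linear algebra. The paper carries this out by changing basis so that one preserved class is $(1,0)$ and computing the matrix entries case by case, whereas your determinant bookkeeping ($\det A = \varepsilon\varepsilon' = 1$ forces the two eigenvalues to agree, and two independent eigenvectors with the same eigenvalue $\varepsilon$ force $A = \varepsilon I$) packages the identical computation a little more cleanly.
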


\begin{proof}
Let $\varphi_*$ be the induced isomorphism on $H_1(T^2)$. Then $\varphi_*$ sends $(p,q)$ to $\pm(p,q)$ and similarly $(p^\prime, q^\prime)$ to $\pm(p^\prime,q^\prime)$. Let's first consider the cases $\varphi_*$ fixes at least one of them. Then, by a change of basis we may assume it is  $(p^\prime,q^\prime)=(1,0)$. Let $\big(\begin{smallmatrix}
  a & b\\
  c & d
\end{smallmatrix}\big)$ $\in SL_2(\mathbb{Z})$ represent $\varphi_*$. As $\varphi_*((1,0))=(1,0)$, $a=1$ and $c=0$. Then $d=1$. If $\varphi_*((p,q))=(p,q)$, then $bq=0$. If $b=0$, then $\varphi_* = Id$, and if $q=0$, then $(p,q)=(1,0)$. If $\varphi_*((p,q))=-(p,q)$, then we find that $p=q=0$ but then $\frac{p}{q}$ does not define a slope. Now consider the case in which $\varphi_*$ flips the signs of both elements. Then $-\varphi_*$ preserves both. As handled previously we see that either $\varphi_*=-Id$ or $(p,q)=(p^\prime,q^\prime)$ and complete the proof.

\end{proof}

\subsection{Proof of Theorem \ref{ThmC}}
Now suppose $K$ is amphichiral. Then $\Sigma(K) \cong \Sigma(-K)$ as oriented manifolds, and hence there exists an orientation-preserving homeomorphism $h: \tilde{X}(\frac{1}{n}) \rightarrow \tilde{X}(-\frac{1}{n})$.

Without loss of generality, we may assume $n>0$. For simplicity, let us denote $C_n$ and $C_{-n}$ by $C_+$ and $C_-$, and similarly denote $Z_n$ and $Z_{-n}$ by $Z_+$ and $Z_-$. Also, let $T_+$ denote the JSJ-torus that cobounds $M_1$ and $ C_+ \cup -M_1$ and let $T_-$ the one that cobounds $M_1$ and $C_- \cup -M_1$. As any homeomorphism is isotopic to one that leaves the union of JSJ-tori invariant, $h$ induces a graph isomorphism on JSJ-graphs. In Cases $i$ and $ii$, as seen from Figure 4,  $C_\pm$ and $Z_\pm$ are central vertices in the JSJ-trees of $\tilde{X}(\frac{1}{n})$ and $\tilde{X}(-\frac{1}{n})$ whose removals result in two isomorphic trees. So, we see that $h$ sends $C_+$ to $C_-$, and $Z_+$ to $Z_-$. Similarly, $h$ sends $T_+$ to $T_-$ in Case $i^\prime$.

We will now show that the existence of such an $h$ would always lead to a contradiction and complete the proof of Theorem \ref{ThmC}.

\begin{lemma}
There is no orientation-preserving homeomorphism from $\tilde{X}(\frac{1}{n})$ to $\tilde{X}(-\frac{1}{n})$.
\end{lemma}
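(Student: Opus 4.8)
The plan is to assume an orientation-preserving homeomorphism $h\colon \tilde X(\tfrac1n)\to\tilde X(-\tfrac1n)$ exists and derive a contradiction by locating, inside each of the two fillings, a canonical curve whose complement is the fixed manifold $\tilde X$, and then analysing how $h$ must act on $\partial\tilde X$. By the reductions already set up we may take $K$ prime and $\tilde X$ irreducible, and we treat separately the degenerate case in which $Y$ is a solid torus; otherwise Lemma \ref{JSJpiecesofX} applies. After isotoping $h$ to preserve the JSJ-tori, it induces an isomorphism of JSJ-trees carrying the central vertex to the central vertex, so (as recorded before the statement) $h$ sends $C_+$ to $C_-$ in Case $i$, $Z_+$ to $Z_-$ in Case $ii$, and $T_+$ to $T_-$ in Case $i'$; we may also assume $n>0$.

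Assume first $|n|\ge 2$. Then $C_\pm=S(0,2;(n,\pm1))$ is a genuine cable space; checking it is not on the exceptional list of Theorem \ref{SFSclassification}, it has a unique Seifert fibration up to isotopy by Theorem \ref{SFSclassuptoiso}, and the same holds for $Z_\pm$ unless $Z_\pm$ lands on the short exceptional list — a situation to be dispatched by hand using Proposition \ref{PropHatcher}. Hence the restriction of $h$ to the central piece is isotopic to a fiber-preserving homeomorphism, under which the unique singular fiber of multiplicity $n$, namely the core $\gamma_n$ (resp. $\gamma_{-n}$) of the Dehn filling solid torus $V_{\pm n}$, is canonical; so after the isotopy $h(\gamma_n)=\gamma_{-n}$. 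Restricting $h$ to the complements of these cores gives an orientation-preserving homeomorphism $g\colon\tilde X\to\tilde X$ with $g(\partial\tilde X)=\partial\tilde X$, which on $\partial\tilde X$ (i) sends the filling slope $\tfrac1n$ to $-\tfrac1n$, as it carries the meridian of $\gamma_n$ to that of $\gamma_{-n}$; (ii) fixes the fiber slope $\mu$ up to sign, since near $\partial\tilde X$ the fibration has fiber $\mu$ and $g$ is fiber-preserving; and (iii) fixes the rational longitude $\lambda_{\tilde X}$, since a homeomorphism of $\mathbb{Q}$-homology solid tori preserves it. By Lemma \ref{FneqRL}, $\lambda_{\tilde X}\ne\mu$, so $g|_{\partial\tilde X}$ preserves two distinct slopes, whence Lemma \ref{Ltorus} forces $g|_{\partial\tilde X}\simeq\pm\mathrm{Id}$; but $\pm\mathrm{Id}$ fixes every slope, contradicting (i) because $\tfrac1n\ne-\tfrac1n$ for $n\ne0$.

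When $|n|=1$ the piece $C_{\pm1}\cong T^2\times I$ is a collar absorbed into the adjacent piece, carries no singular fiber and no unique fibration, so the canonical-curve argument breaks — which is exactly why Case $i'$ is isolated. Here I would instead argue directly on $T_1=T_+=T_-$: $h$ restricts to orientation-preserving self-homeomorphisms of $M_1$ and of the complementary side, and $\tilde X(+1)$ and $\tilde X(-1)$ differ only by a re-gluing across $C_{\pm1}$, which amounts to a Dehn twist along the fiber $\mu$. Using that $h|_{T_1}$ preserves the rational longitude of the $\mathbb{Q}$-homology solid torus on the far side of $T_1$ together with the slope of the essential annulus that runs through $C$ to $\partial\tilde X$ (the latter controlled by the second assertion of Lemma \ref{FneqRL}), Lemma \ref{Ltorus} again pins $h|_{T_1}$ to $\pm\mathrm{Id}$, which is incompatible with the shift between the two gluings. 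In the remaining sub-case, $Y$ a solid torus, both $\tilde X(\pm\tfrac1n)$ are closed Seifert fibered spaces — a small Seifert fibered space or a lens space — that are mirror images of one another, and Seifert/Moser-type reasoning via Euler numbers (Proposition \ref{PropHatcher}) and Theorem \ref{SFSclassification} shows they are not orientation-preservingly homeomorphic.

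I expect the main obstacle to be the $|n|=1$ case: unlike $|n|\ge2$, there is no canonical curve in $\tilde X(\pm1)$ with complement $\tilde X$, so the contradiction must come from a finer analysis of the re-gluing across the collar $C_{\pm1}$, which is precisely where the annulus statement in Lemma \ref{FneqRL} and careful bookkeeping of slopes on $T_1$ become essential. A secondary technical point is confirming that $h$ must preserve, rather than interchange, the two halves of the central piece and that the central Seifert pieces never lie among the finitely many manifolds with non-unique fibrations; both should follow from the asymmetry between $Y$ and $-Y$ and a direct inspection, but each needs to be verified.
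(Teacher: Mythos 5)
Your key idea for $|n|\ge 2$ --- that the core of the filling solid torus is a singular fiber of multiplicity $n$ in the central Seifert piece, hence is carried to the core on the other side by a fiber-preserving homeomorphism, so that $h$ restricts to a self-homeomorphism of $\tilde{X}$ whose boundary restriction fixes the two distinct slopes $\mu$ and $\lambda_{\tilde{X}}$ while sending $\frac{1}{n}$ to $-\frac{1}{n}$ --- is correct and genuinely cleaner than the paper's argument in Case $i$, where the central piece is exactly the cable space $C_{\pm n}$ and its singular fiber is unique; there Lemmas \ref{FneqRL} and \ref{Ltorus} do give a uniform contradiction with no Euler-number case split. The gap is in Case $ii$: the central piece is $Z_{\pm n}=M_1\cup C_{\pm n}\cup -M_1$, and $M_1$, $-M_1$ may contribute additional singular fibers, possibly also of multiplicity $n$. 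A fiber-preserving homeomorphism only permutes singular fibers of equal multiplicity, so your assertion that $\gamma_n$ is ``the unique singular fiber of multiplicity $n$'' fails, $h(\gamma_n)=\gamma_{-n}$ is unjustified, and the identification of the complements with $\tilde{X}$ collapses. This is exactly why the paper argues instead on the boundary tori of $Z_\pm$: it first bounds $n\le 2$ using $e(Z_\pm)=\pm\frac{1}{n}$ and Proposition \ref{PropHatcher}, then tracks the total Dehn twisting $\sum k_i=\frac{2}{n}\ne 0$ around an orbit of boundary components, and applies Lemma \ref{Ltorus} to the composite map to force a regular fiber to coincide with a rational longitude, contradicting Lemma \ref{FneqRL}.

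The remaining cases are asserted rather than proved, and neither closes as you suggest. For $|n|=1$ you claim $h|_{T_+}$ preserves two slopes and is therefore $\pm\mathrm{Id}$, but you exhibit only one preserved slope (a rational longitude); the ``slope of the annulus through $C$'' is not preserved for any reason you give, and the second part of Lemma \ref{FneqRL} forbids such an annulus rather than supplying a slope. Moreover the two rational-longitude constraints live in coordinates related by the two \emph{different} gluings $\varphi,\psi$ of $\tilde{X}(\pm 1)=Y\cup -Y$, so Lemma \ref{Ltorus} does not apply to a single map; the paper's actual contradiction is that the resulting matrix equations force either $p=0$ (against Lemma \ref{FneqRL}) or a boundary map of infinite order, against Johannson's finiteness theorem. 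When $Y$ is a solid torus, the Euler-number comparison only shows the two fibrations are non-isomorphic; by Theorem \ref{SFSclassification} the manifold may still be a lens space with many fibrations, and one must compute $\tilde{X}(\pm 1)=L(q^2,\pm 1+qs)$, invoke the classification of lens spaces, and use that $|H_1(\Sigma(K))|$ is odd to exclude $q=2$. Case $iii$ is not addressed. So the proposal contains a valid alternative for Case $i$ with $n\ge 2$, but has real gaps in Cases $ii$, $i^\prime$, and the solid-torus case.
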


\begin{proof}
We begin by considering the following case. 

\textbf{$Y$ is a solid torus:} Recall that $Y = \Sigma(B_l,t_l)$ where $B_l = B_L \setminus \text{int }(B_L \cap B)$ and $t_l = t \cap B_l$. As $Y$ is a solid torus, $t_l$ is a $\frac{p}{q}$-tangle for some $\frac{p}{q} \in \mathbb{Q} \cup \{\frac{1}{0}\}$. Hence, $Y$ fibers as $S(0,1;(q,p))$ (and similarly, $-Y$ fibers as $S(0,1;(q,-p))$ ). Hence we also obtain $\tilde{X}$ to be the Seifert fibered space $S(0,1:(q,p),(q,-p))$. Then $\frac{1}{n}$-Dehn filling of $\tilde{X}$ gives rise to a singular fiber with coefficient $(n, 1)$ and caps off $\partial \tilde{X}$ which results in a closed Seifert fibered space $S(0,0:(q,p),(q,-p),(n,1))$. Similarly, $\tilde{X}(-\frac{1}{n})$ is also a closed Seifert fibered space $S(0,0:(q,p),(q,-p),(n,-1))$. We find $e(\tilde{X}(\frac{1}{n}))=\frac{1}{n}$ and $e(\tilde{X}(-\frac{1}{n})) = - \frac{1}{n}$. Then, by Proposition \ref{PropHatcher} the Seifert fibrations of $\tilde{X}(\frac{1}{n})$ and $\tilde{X}(-\frac{1}{n})$ are not isomorphic. Hence, by Theorem \ref{SFSclassification} $\tilde{X}(\frac{1}{n})$ (and similarly $\tilde{X}(- \frac{1}{n})$) is either a lens space, or a prism manifold, or $K \simtimes S^1$. It cannot be a prism manifold or $K \simtimes S^1$ as a prism manifold has 2-torsion in its first homology and $K \simtimes S^1$ has infinite order first homology. Then we conclude that $\tilde{X}(\frac{1}{n})$ is a lens space.  As lens spaces fiber with at most $2$ singular fibers, either $q=1$ or $n=1$. If $q=1$, the closure of $t_l$, the partial knot $J$, will be the unknot. But this contradicts our assumption. Then $n=1$, in this case we find $\tilde{X}(1)$ to be the lens space $L(q^2, 1 + qs)$ and $\tilde{X}(-1)$ to be the lens space $L(q^2,-1+qs)$ where $r,s \in \mathbb{Z}$ such that $ps + qr =1$. Then, by the classification of lens spaces, we must have either $1+qs \equiv -1 + qs \pmod{q^2}$ or $(1 + qs)(-1 +qs) \equiv 1\pmod{q^2}$. Solving both equations, we get $\pm 2 \equiv 0 \pmod{q^2}$. Then, $q = 1$ or $q=2$. The case $q=1$ is handled previously, then $q=2$. But then $|H_1(\tilde{X}(\pm 1))| = |H_1(L(4,\pm 1 + 4s))| = 4$, which is a contradiction as the order of the double branched cover of a knot is always an odd number.

This completes the proof for the case in which $Y$ is a solid torus. Hence, we are left to consider the following.

\textbf{$Y$ is not a solid torus:} In light of Lemma {\ref{JSJpiecesofX}}, we will start considering Case $ii$ as most of Case $i$ follows as a special case of it.

Case \textit{ii:} In this case $Z_+$ and $Z_-$ are JSJ-pieces of $\tilde{X}(\frac{1}{n})$ and $\tilde{X}(-\frac{1}{n})$, respectively. Then, as explained above $h(Z_+) = Z_-$. By Theorem \ref{SFSclassification}, $Z_+$ and $Z_-$ are isomorphic. By the mirror symmetry, the other singularity coefficients except $(n,\pm 1)$ come in cancelling pairs and we get $e(Z_\pm) = \pm \frac{1}{n}$. But we see from Proposition \ref{PropHatcher} that if $n>2$, $Z_+$ and $Z_-$ can not be orientation-preservingly homeomorphic and we get a contradiction. Then, $n$ is either $1$ or $2$.

Now by Theorem \ref{SFSclassuptoiso}, up to isotopy we may assume that $h|_{Z_+}$ is fiber-preserving. Let $\{A_1, \dots ,A_m\}$ be the connected components of $\tilde{X}(\frac{1}{n})\setminus Z_+$. We may choose a basis $(s_i,f_i)$ for $H_1(\partial A_i)$ such that $s_i$ is determined by a fixed section of $Z_+$ and $f_i$ is a fiber of $Z_+$. Now let $r,h_i$ and $k_i$ be as in Proposition \ref{PropHatcher}, then $\Sigma_{i=1}^m k_i = \frac{2}{n}$. For each $i \in \{1,\dots ,m\}$ there exists a least $n_i \in \mathbb{N}$ such that $r^{n_i}(i)=i$. The set $\{1, \dots ,m\}$ is partitioned into sets such that $i$ and $j$ are in the same set if $i=r^n(j)$ for some $n \in \mathbb{N}$. As $\Sigma_{i=1}^m k_i = \frac{2}{n} \neq 0$, for some set $\textbf{a}$ in this partition, $\Sigma_{i \in \textbf{a}} k_i \neq 0$. Now pick a random element $j \in \textbf{a}$ and let $\overline{h} = h_{r^{n_j-1}(j)} \circ \dots \circ h_{r(j)} \circ h_j$. Then $\overline{h}(s_j)= s_j + \Sigma_{i \in \textbf{a}} k_i f_j$ and $\overline{h}(f_j)=f_j$. As each $A_i$ is a $\mathbb{Q}$-homology solid torus, each $h_i$ is also rational longitude-preserving, so is $\overline{h}$. Then, by Lemma \ref{Ltorus} either $\overline{h}=\pm Id$ or $f_j = \lambda_{A_j}$. However, $h \neq \pm Id$ as $\Sigma_{i \in \textbf{a}} k_i \neq 0$. Then $f_j = \lambda_{A_j}$. Hence, $\lambda_{A_j}$ is parallel to $\mu$ but this contradicts Lemma \ref{FneqRL}.

Case \textit{i:} Suppose $n>1$. Then, $C_+$ and $C_-$ are JSJ-pieces of JSJ-pieces of $\tilde{X}(\frac{1}{n})$ and $\tilde{X}(-\frac{1}{n})$, respectively, and hence $h(C_+) = C_-$. We immediately see that $e(C_\pm)=\pm \frac{1}{n}$. As before, by Theorem \ref{SFSclassification} $C_+$ and $C_-$ are isomorphic, but from Proposition \ref{PropHatcher} that if $n>2$, $C_+$ and $C_-$ can not be orientation-preservingly homeomorphic and we get a contradiction. Then, we conclude that $n$ is 2. Now by Theorem \ref{SFSclassuptoiso}, up to isotopy we may again assume that $h|_{C_+}$ is fiber-preserving. Now the rest of the proof follow \textit{mutatis mutandis} from the arguments in Case $ii$ where $m=2$ (and also, $A_1 = Y$ and $A_2 = -Y$).

Case \textit{i$^\prime$ :} In this case $n=1$. Recall that $C = S(0,3) = P \times S^1$ where $P$ is the pair of pants. We choose bases $(s,f)$ and $(s^\prime,f^\prime)$ for $H_1(\partial Y)$ and $H_1(\partial(-Y))$, respectively, such that $s$ and $s^\prime$ are determined by $P \times \{pt\}$ and $f$ and $f^\prime$ are fibers of $C$. Observe that $(s,f)$ and $(s^\prime, f^\prime)$ are interchanged under $\tilde{\tau}$. Then, we can view $\tilde{X}(1) = Y \cup_\varphi -Y$ where $\varphi_*(s)=s^\prime + f^\prime$ and $\varphi_*(f)=-f^\prime$. Similarly, $\tilde{X}(-1) = Y \cup_\psi -Y$ where $\psi_*(s) = s^\prime -f^\prime$ and $\psi_*(f)=-f^\prime$. As discussed above, $h(T_+)=T_-$. There is a canonical identification of $T_+$ and $T_-$ with $\partial Y$ in both $\tilde{X}(1)$ and $\tilde{X}(-1)$, then let $h_*=\big(\begin{smallmatrix}
  a & b\\
  c & d
\end{smallmatrix}\big) \in GL_2(\mathbb{Z})$ with $det$ $h_* = \pm 1$ represent the induced homomorphism $(h|_{T_+})_*$ given in the basis $(s,f)$. Let $\frac{p}{q}$ be the rational longitude of $Y$. By the choice of the pair $(s,f)$ and $(s^\prime,f^\prime)$, the rational longitude of $-Y$ has the slope $-\frac{p}{q}$. First suppose $h(Y)=Y$. Then $det$ $h_* =1$. Since $h|_Y$ and $h|_{-Y}$ are rational longitude-preserving, $h_*$ sends $(p,q)$ to either $(p,q)$ or $-(p,q)$ and $\psi_* \circ h_* \circ \varphi^{-1}_*$ sends $(-p,q)$ to either $(-p,q)$ or $(p,-q)$. First suppose  $h_*((p,q))=(p,q)$ and $\psi_* \circ h_* \circ \varphi^{-1}_*((-p,q)) = (-p,q)$. Solving these two equations yields $p=0$, but this contradicts Lemma \ref{FneqRL}. Now suppose $h_*((p,q))=-(p,q)$ and $\psi_* \circ h_* \circ \varphi^{-1}_*((-p,q)) = (-p,q)$. Then solving these two equations together with $det$ $h_* = 1$ yields either $p=0$, which is a contradiction to Lemma \ref{FneqRL}, or $b=2, c = -\frac{(a+1)^2}{2},$ and $d=-(a+2)$ for some odd $a$. Then by induction we see that
\begin{align*}
 (h_*)^n = \Bigg[\begin{matrix} (-1)^{n+1}(na+n-1) & (-1)^{n+1}2n \\
                (-1)^n \frac{n}{2} (a+1)^2 & (-1)^n(na+n+1) \end{matrix}\Bigg],   
\end{align*}
and hence $h_*$ has infinite order, but as $Y$ is not a solid torus, this is a contradiction to Johannson's finiteness theorem \cite{Jo79}. Now observe that the remaining two situations are similar to the previous ones up to sign, so the result will not change and in both situations we still get contradictions. Then we are left to consider the case $h(Y)=C_- \cup -Y$. Now $det$ $h_* = -1$. As before $h|_Y$ and $h|_{-Y}$ are rational longitude-preserving, then $\psi_* \circ h_*$ sends $(p,q)$ to either $(-p,q)$ or $(p,-q)$, and $h_* \circ \varphi^{-1}_*$ sends $(-p,q)$ to either $(p,q)$ or $-(p,q)$. First, consider $\psi_* \circ h_*$ sends $(p,q)$ to $(-p,q)$ and $h_* \circ \varphi^{-1}_*$ sends $(-p,q)$ to  $(p,q)$. As before solving these two equations yields $p=0$, and this is a contradiction to Lemma \ref{FneqRL}. Now, consider $\psi_* \circ h_*$ sends $(p,q)$ to $(p,-q)$ and $h_* \circ \varphi^{-1}_*$ sends $(-p,q)$ to  $(p,q)$. Then solving these two equations together with $det$ $h_* = 1$ yields either $p=0$, which is a contradiction to Lemma \ref{FneqRL}, or $b=-2, c = -\frac{(a-1)^2}{2} -1,$ and $d=2-a$. Let $g$ be the twisting along an annulus connecting a neighborhood of the singular fiber of $C_-$ and the boundary component of $C_- \cup -Y$ such that $g(C_- \cup -Y) = C_+ \cup Y$. Then $f=h|_Y \circ g \circ h|_{C_+ \cup -Y}$ is a homeomorphism such that $f(Y)=Y$. By induction we find that 
\begin{align*}
 (f_*)^n = \Bigg[\begin{matrix} (-1)^n(2na-(2n-1)) & (-1)^{n+1}4n \\
                (-1)^n n (a-1)^2 & (-1)^{n+1}(2na-(2n+1)) \end{matrix}\Bigg],   
\end{align*}
and hence $f_*$ has infinite order, but again as $Y$ is not a solid torus, this is a contradiction to Johannson's finiteness theorem \cite{Jo79}. The remaining two situations are similar to the previous ones up to sign, so the result will not change and in both situations we still get contradictions.

Case \textit{iii:} Now $\tilde{X}$ is a Seifert fibered space. Then $\frac{1}{n}$-Dehn filling of $\tilde{X}$ gives rise to a singular fiber with coefficient $(n, 1)$ and caps off $\partial \tilde{X}$ which results in a closed Seifert fibered space $\tilde{X}(\frac{1}{n})$. Similarly, $\tilde{X}(-\frac{1}{n})$ is also a closed Seifert fibered space with a singular fiber with coefficient $(n,-1)$. As before, by the mirror symmetry the other singularity coefficients except $(n,\pm 1)$ come in cancelling pairs, and we get $e(\tilde{X}(\frac{1}{n}))=\frac{1}{n}$ and $e(\tilde{X}(-\frac{1}{n})) = - \frac{1}{n}$. Then, by Proposition \ref{PropHatcher} the Seifert fibrations of $\tilde{X}(\frac{1}{n})$ and $\tilde{X}(-\frac{1}{n})$ are not isomorphic. Hence, by Theorem \ref{SFSclassification} $\tilde{X}(\frac{1}{n})$ (and similarly $\tilde{X}(- \frac{1}{n})$) is either a lens space, or $S(-1,0;(q,p))$, or $K \simtimes S^1$. It cannot be $S(-1,0;(q,p))$, or $K \simtimes S^1$ as $S(-1,0;(q,p))$ has 2-torsion in its first homology and $K \simtimes S^1$ has infinite order first homology. Then we conclude that $\tilde{X}(\frac{1}{n})$ is a lens space. As lens spaces fiber with at most 2 singular fibers, $Y$ has at most one singular fiber. Thus, $Y$ is a solid torus, but this is a contradiction.

\end{proof}

\begin{proof}[Proof of Theorem \ref{ThmB}]
This follows from Theorem \ref{ThmA} and Theorem \ref{ThmC}.

\end{proof}

\bibliographystyle{alpha}
\bibliography{main}
\end{document}